\newtheorem{theorem}{Theorem}[section]
\newtheorem{lemma}[theorem]{Lemma}
\newtheorem{proposition}[theorem]{Proposition}
\newtheorem{corollary}[theorem]{Corollary}
\newtheorem{conjecture}[theorem]{Conjecture}
\theoremstyle{definition}
\newtheorem{definition}[theorem]{Definition}
\newtheorem{remark}[theorem]{Remark}
\newtheorem{question}[theorem]{Question}
\newcommand{\FF}{\mathbb{F}}
\newcommand{\F}{\mathbb{F}}
\newcommand{\bP}{\mathbb{P}}
\newcommand{\ZZ}{\mathbb{Z}}
\newcommand{\RR}{\mathbb{R}}
\newcommand{\bA}{\mathbb{A}}
\DeclareMathOperator{\Gal}{Gal}
\def\multiset#1#2{\ensuremath{\left(\kern-.3em\left(\genfrac{}{}{0pt}{}{#1}{#2}\right)\kern-.3em\right)}}
\title{Hypersurfaces passing through the Galois orbit of a point}
\author{Shamil Asgarli}
\address{Department of Mathematics \& Computer Science \\ Santa Clara University \\ CA 95053, United States}
\email{sasgarli@scu.edu}
\author{Jonathan Love}
\address{Mathematics Institute\\ Leiden University\\ 2333 CC Leiden\\ Netherlands}
\email{j.r.love@math.leidenuniv.nl}
\author{Chi Hoi Yip}
\address{School of Mathematics\\ Georgia Institute of Technology\\Atlanta, GA 30332\\ United States}
\email{cyip30@gatech.edu}
\subjclass[2020]{Primary 14G15, 14J70; Secondary 14N05, 11G25}
\keywords{hypersurface, finite field, Galois orbit, linear system, irreducibility}
\begin{document}

\begin{abstract}
Asgarli, Ghioca, and Reichstein proved that if $K$ is a field with $|K|>2$, then for any positive integers $d$ and $n$, and separable field extension $L/K$ with degree $m=\binom{n+d}{d}$, there exists a point $P\in \mathbb{P}^n(L)$ which does not lie on any degree $d$ hypersurface defined over $K$. They asked whether the result holds when $|K| = 2$. We answer their question in the affirmative by combining various ideas from arithmetic geometry. More generally, we show that for each positive integer $r$ and separable field extension $L/K$ with degree $r$, there exists a point $P \in \mathbb{P}^n(L)$ such that the vector space of degree $d$ forms over $K$ that vanish at $P$ has the expected dimension. We also discuss applications to linear systems of hypersurfaces with special properties.
\end{abstract} 

\maketitle

\section{Introduction}
Throughout the paper, let $K$ be a field, $L/K$ a separable extension of degree $r$, and $n,d$ positive integers. If $K$ is a finite field with $q$ elements, we write $K=\F_q$. Let $\mathcal{S}_{n,d}(K)$ denote the vector space of homogeneous polynomials over $K$ of degree $d$ in $n+1$ variables, which has dimension $m\colonequals\binom{n+d}{n}$ over $K$.

Each point in $\mathbb{P}^n(\overline{K})$ imposes a linear constraint on the space of degree $d$ forms on $\bP^n$ by requiring the forms to vanish at the given point. We say a set $S$ of $r\leq m$ points is in \emph{general position} if these constraints are linearly independent. 
If $L/K$ is Galois, is there a point  $P\in \bP^n(L)$ such that the $\Gal(L/K)$-orbit of $P$ is in general position? 
Recently, Asgarli, Ghioca, and Reichstein \cite[Theorem 1.1]{AGR24} proved the following result, addressing the case $r=m$. Note that there exists a hypersurface defined over $\overline{K}$ that contains the Galois orbit of $P$ if and only if there exists a hypersurface defined over $K$ that contains $P$.
\begin{theorem}[Asgarli-Ghioca-Reichstein]\label{thm:AGR}
Let $K$ be a field with $|K| > 2$, and let $d, n$ be positive integers. For any separable field extension $L/K$ with degree $m = \binom{n + d}{d}$, there exists a point $P \in \bP^n(L)$ that does not lie on any degree $d$ hypersurface defined over $K$.
\end{theorem}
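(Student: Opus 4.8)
The plan is to translate the statement into a question about the Galois conjugates of a point, settle the case of an infinite base field by a dimension count, and then attack the (substantially harder) finite-field case.

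\textbf{Reformulation.} Fix a nonzero affine representative $\tilde P=(p_0,\dots,p_n)\in L^{n+1}$ of a point $P\in\bP^n(L)$. Then $P$ lies on no degree $d$ hypersurface over $K$ exactly when the $K$-linear evaluation map $\mathrm{ev}_P\colon \mathcal S_{n,d}(K)\to L$, $f\mapsto f(\tilde P)$, is injective; since source and target both have $K$-dimension $m=\binom{n+d}{d}$, this is equivalent to $\mathrm{ev}_P$ being an isomorphism, i.e. to the $m$ degree $d$ monomials in $p_0,\dots,p_n$ forming a $K$-basis of $L$. Base-changing to a separable closure and writing $L\otimes_K K^{\mathrm{sep}}\cong (K^{\mathrm{sep}})^m$ via the $m$ embeddings $\sigma_1,\dots,\sigma_m\colon L\hookrightarrow K^{\mathrm{sep}}$, this becomes invertibility of the matrix $\big(\mu_j(\sigma_k(\tilde P))\big)_{j,k}$, where $\mu_1,\dots,\mu_m$ are the degree $d$ monomials; equivalently, the $m$ conjugate points $\sigma_1(P),\dots,\sigma_m(P)\in\bP^n(K^{\mathrm{sep}})$ should impose $m$ independent conditions on degree $d$ forms, i.e. be distinct and lie on no common degree $d$ hypersurface (when $L/K$ is Galois these are exactly the points of the Galois orbit of $P$). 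So the goal is to produce a point with field of definition $L$ whose conjugate $m$-tuple is in general position. The one classical input I would invoke is that a \emph{general} $m$-tuple of points of $\bP^n$ has this property: the degree $d$ Veronese variety is nondegenerate in $\bP^{m-1}$, and the general points of a nondegenerate variety span the ambient projective space.

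\textbf{Infinite base field.} Fix a primitive element $\theta$ for $L/K$ with minimal polynomial $g$ of degree $m$, so $L=K[x]/(g)$ has $K$-basis $1,\theta,\dots,\theta^{m-1}$, and parametrize $\tilde P$ by $p_i=P_i(\theta)$ with $\deg P_i<m$; the coefficients of $P_0,\dots,P_n$ give $(n+1)m$ parameters. Expanding the monomials $\mu_j(\tilde P)$ in the basis $1,\theta,\dots,\theta^{m-1}$ yields an $m\times m$ matrix whose entries are polynomials over $K$ in the parameters, with determinant $\Delta$ vanishing exactly where $\mathrm{ev}_P$ fails to be an isomorphism. Extending scalars to $\overline K$, the splitting $g=\prod_k(x-\theta_k)$ identifies $L\otimes\overline K$ with $\prod_k\overline K$; by Lagrange interpolation the parameters make the points $Q_k=(P_0(\theta_k),\dots,P_n(\theta_k))$ range over an arbitrary $m$-tuple in $(\overline K^{\,n+1})^m$, and $\Delta$ becomes, up to the nonzero Vandermonde of the $\theta_k$, the determinant $\det\big(\mu_j(Q_k)\big)$. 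By the classical input this is not identically zero, so $\Delta$ is a nonzero polynomial; since $K$ is infinite it has a nonvanishing $K$-point, yielding the desired $\tilde P\in L^{n+1}$.

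\textbf{Finite base field.} Here $K=\F_q$ with $q\ge 3$, and since every degree $m$ extension of $\F_q$ is $\F_{q^m}$ it suffices to find $P\in\bP^n(\F_{q^m})$ with Frobenius orbit of size $m$ on no degree $d$ hypersurface over $\F_q$. When $q$ is large compared to $d$ a union bound suffices: each nonzero $f\in\mathcal S_{n,d}(\F_q)$ contributes at most $dq^{m(n-1)}+O(q^{m(n-2)})$ points to $V(f)(\F_{q^m})$ (e.g. by Serre's bound), there are $(q^m-1)/(q-1)$ such $f$ up to scaling, and $d/(q-1)<1$ once $q\ge d+2$, so the bad points cannot fill $\bP^n(\F_{q^m})$. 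For the remaining pairs with $3\le q\le d+1$ I would bring in more arithmetic geometry: the ``good locus'' $\{\Delta\ne 0\}\subseteq\bA^{(n+1)m}_{\F_q}$ from the previous paragraph is geometrically irreducible of dimension $(n+1)m$ and nonempty over $\overline{\F_q}$, so a Lang--Weil estimate produces an $\F_q$-point once $q$ is large relative to $n$ and $d$, and the narrow band of genuinely small cases could be dispatched by an explicit construction — for instance taking the $p_i$ to be well-chosen powers of a primitive root of $\F_{q^m}$, or placing $P$ on a carefully chosen curve in $\bP^n$ over $\F_q$ on which the restriction map on degree $d$ forms is injective.

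\textbf{Main obstacle.} The reformulation and the infinite-field case are essentially bookkeeping; the real content is the finite-field case for small $q$ (in particular $q=3$) with $d$ not small. The crude union bound loses precisely a factor of order $d/(q-1)$ and so fails exactly there, passing to curves does not improve the ratio, and the most obvious explicit constructions — monomials evaluated at powers of a single primitive element — provably fail already for $n=d=2$. I expect the heart of the proof to be an argument genuinely sensitive to the arithmetic of $\F_{q^m}$ that forces the Frobenius orbit of a suitable degree $m$ point into general position, and that is where the bulk of the work will lie.
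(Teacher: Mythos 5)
Your reformulation (conjugate tuple in general position, determinant $\Delta$), the infinite-field argument via a nonvanishing polynomial, and the observation that a crude union bound handles $q\ge d+2$ all match the paper in spirit, and they are correct. The paper's actual content for Theorem~\ref{thm:AGR} over finite fields is the short proof in Section~\ref{sec:avoid}: after reducing to $d\ge q\ge 3$ (citing \cite[Proposition 3.1]{AGR24} for the complementary range, playing the role of your union bound), it bounds the bad locus by counting points on \emph{irreducible} hypersurfaces.

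The gap you already flagged is real, and the patch you sketch does not close it. Your Lang--Weil idea applied to $\{\Delta\neq 0\}\subseteq\bA^{(n+1)m}$ needs $q$ large compared to $\deg\Delta=dm$; that is a far stronger requirement than $q\ge d+2$, so it strictly worsens the range rather than extending it, and no ``narrow band'' is left to check by hand — for fixed $q\in\{3,\dots\}$ the range $d\ge q-1$ is infinite. The idea you are missing, and what the paper does, is to avoid the factor $d/(q-1)$ entirely by covering $\bigcup_{f}V(f)(\F_{q^m})$ not by all degree-$d$ hypersurfaces but by the set $\mathcal{H}$ of $\F_q$-\emph{irreducible} hypersurfaces of each degree $e\le d$ (Lemma~\ref{lem:obs}). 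This buys two gains simultaneously: Proposition~\ref{prop:LW} gives each irreducible $H$ only $q^{m(n-1)}(1+O(d^2q^{-m/2}))/(q^m-1)\cdot(q^m-1)$ points with no factor of $d$ in front, and Lemma~\ref{lem:H} shows $\#\mathcal{H}\le \frac{q^m}{q-1}\bigl(1-q^{-mn/(n+d)}+2q^{-(m-n-1)}\bigr)$, i.e.\ $\mathcal{H}$ is \emph{not} $d$ times larger than the family of degree-$d$ hypersurfaces but in fact slightly smaller. Multiplying these two bounds and comparing to $\#\bP^n(\F_{q^m})$ gives inequality~\eqref{eq:eq111}, which holds for all $q\ge 3$ once $m$ is moderate, and the handful of leftover $(q,n,d)$ are checked directly. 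Note that this is not an argument ``sensitive to the arithmetic of $\F_{q^m}$'' in the way you anticipated; the only arithmetic input is factoring a form over $\F_q$ into irreducibles and the resulting reduction to $\mathcal{H}$.
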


 As remarked in \cite{AGR24}, Theorem~\ref{thm:AGR} generalizes the primitive element theorem for separable field extensions. 
In this paper, we extend Theorem~\ref{thm:AGR} to cover the remaining case $K=\F_2$ posed as an open question in \cite{AGR24}. We also present a simpler proof of Theorem~\ref{thm:AGR} in the case $K$ is a finite field, and prove the following natural generalization of Theorem~\ref{thm:AGR} for extensions $L/K$ of arbitrary degree $r$.

\begin{theorem}\label{thm:main}
Let $K$ be a field, and $L/K$ be a separable extension of degree $r \geq 1$. For any $n, d \geq 1$, there exists $P \in \bP^n(L)$ such that 
\begin{equation}\label{eq: expected dim}
    \dim_K \{F\in\mathcal{S}_{n,d}(K) \ | \ F(P)=0\}= \max(m-r,0),
\end{equation}
where $m\colonequals\binom{n+d}{n}$ is the dimension of $\mathcal{S}_{n,d}(K)$.
\end{theorem}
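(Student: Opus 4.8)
The plan is to translate \eqref{eq: expected dim} into a linear-independence statement via the Veronese embedding, dispatch the case of infinite $K$ using the rationality of a Weil restriction, and then concentrate all the real difficulty into the finite-field case, which is attacked by an explicit inductive construction (with point counting disposing of $q$ large). Write $\nu = \nu_{n,d}\colon \bP^n \hookrightarrow \bP^{m-1}$ for the $d$-uple Veronese embedding, defined over the prime field; a form $F \in \mathcal{S}_{n,d}(\overline{K})$ then corresponds to a hyperplane of $\bP^{m-1}$, and $F(P)=0$ precisely when that hyperplane contains $\nu(P)$. If $P_1,\dots,P_r$ denote the images of $P$ under the $r$ distinct $K$-embeddings $L\hookrightarrow\overline{K}$, then the space of forms over $\overline{K}$ vanishing at all $P_i$ is Galois-stable, hence has a $K$-basis, and a $K$-form vanishes at $P\in\bP^n(L)$ iff it vanishes at every $P_i$; consequently the left-hand side of \eqref{eq: expected dim} equals $m-\operatorname{rank}\!\big(\nu(P_1)\mid\dots\mid\nu(P_r)\big)$, where each $\nu(P_i)$ is a vector in $\overline{K}^m$ defined up to scalar. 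So the theorem is equivalent to the assertion that there exists $P\in\bP^n(L)$ for which the conjugates of $\nu(P)$ span a subspace of $\overline{K}^m$ of the largest possible dimension $\min(r,m)$.

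For the geometric set-up, let $W\colonequals\operatorname{Res}_{L/K}\bP^n_L$ be the Weil restriction: a smooth projective $K$-variety of dimension $nr$ with $W\times_K\overline{K}\cong(\bP^n_{\overline{K}})^r$ (the $r$ factors indexed by the embeddings $L\hookrightarrow\overline{K}$, with $\Gal$ acting by permuting them and acting on each factor), and $W(K)=\bP^n(L)$. Inside $W$, the vanishing of all $\min(r,m)\times\min(r,m)$ minors of $\big(\nu(P_1)\mid\dots\mid\nu(P_r)\big)$ is Galois-stable and so cuts out a closed subvariety $Y\subseteq W$ defined over $K$; moreover $Y\neq W$, since $\min(r,m)$ general points on the nondegenerate Veronese variety are linearly independent. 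The theorem thus asserts exactly that $(W\setminus Y)(K)\neq\varnothing$. When $K$ is infinite this is automatic: $W$ contains $\operatorname{Res}_{L/K}\bA^n_L\cong\bA^{nr}_K$ as a dense open, so $W$ is $K$-rational, $W(K)$ is Zariski dense, and hence it is not contained in the proper closed subset $Y$. In particular this recovers Theorem~\ref{thm:AGR} for infinite $K$.

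It remains to treat $K=\F_q$, $L=\F_{q^r}$. Here $|W(\F_q)|=|\bP^n(\F_{q^r})|\sim q^{rn}$, so once one bounds $|Y(\F_q)|=O(q^{rn-1})$---which follows from Lang--Weil-type estimates together with a bound on the number and degrees of the geometric components of $Y$---the case of $q$ large in terms of $n,d$ is immediate. The point of the paper, and the genuinely hard part, is small $q$, and for this I would give an explicit construction by induction on $n$. For $n=1$ take $P=[1:t]$ with $t$ of degree $r$ over $\F_q$ (such $t$ exists); the monomial values at $P$ are $1,t,\dots,t^d$, and since $1,t,\dots,t^{r-1}$ is an $\F_q$-basis of $\F_{q^r}$ these span a subspace of dimension $\min(r,d+1)$, as required. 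For the step from $\bP^{n-1}$ to $\bP^n$, write an affine representative $(p_1,\dots,p_n)$ of $P$ and group the degree-$d$ monomials at $P$ by the exponent of $p_n$:
\[
\big\langle P^{\alpha}:|\alpha|=d\big\rangle_{\F_q}\;=\;\sum_{j=0}^{d}p_n^{j}\,V_j,\qquad V_j\colonequals\big\langle (p_1,\dots,p_{n-1})^{\beta}:|\beta|\le d-j\big\rangle_{\F_q},
\]
a sum of nested subspaces $V_0\supseteq V_1\supseteq\dots\supseteq V_d$ of $\F_{q^r}$, and similarly for all lower degrees. One strengthens the inductive hypothesis on $\bP^{n-1}$ so that the $V_j$ (and their analogues for degrees $<d$) all have their generic dimensions $\min\!\big(r,\binom{n-1+d-j}{n-1}\big)$, and then the task is to choose $p_n\in\F_{q^r}$ so that $\dim_{\F_q}\sum_{j}p_n^{j}V_j$ attains the maximal value $\min\!\big(r,\sum_j\dim_{\F_q}V_j\big)$ for this and every relevant nested subfamily simultaneously. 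Proving that such a $p_n$ exists over \emph{every} finite field---a statement about cyclic multiplication operators and nested flags over $\F_q$, which I would approach by induction on $d$ via $\sum_{j\le d}t^{j}V_j=V_0+t\big(\sum_{j\le d-1}t^{j}V_{j+1}\big)$ together with an analysis of the stabilizer subfield $\{s:sV=V\}$---is the technical heart of the argument.

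The step I expect to be the main obstacle is precisely what left the $\F_2$ case open in \cite{AGR24}: a crude union bound over the $\sim q^m$ degree-$d$ hypersurfaces over $\F_q$ is useless, because each such hypersurface already contains $\sim q^{r(n-1)}$ points of $\bP^n(\F_{q^r})$ whereas the whole space has only $\sim q^{rn}$. So for small $q$ one has essentially no slack and must rely on the structured construction above; within it, the delicate points are the maximal-dimension lemma for $\sum_j t^{j}V_j$ when $q=2$ and arranging the inductive hypothesis to be strong enough---controlling all the intermediate subspaces, not merely the top one---so that the induction $\bP^1\to\bP^2\to\dots\to\bP^n$ closes.
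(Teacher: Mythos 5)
Your reformulation via the Veronese embedding and the Weil restriction $W = \operatorname{Res}_{L/K}\bP^n_L$ is correct, and for infinite $K$ the argument closes cleanly: $W$ is $K$-rational via $\operatorname{Res}_{L/K}\bA^n_L \cong \bA^{nr}_K$, so $W(K)$ is Zariski dense and cannot lie in the proper closed $K$-subvariety $Y$ cut out by the maximal-minor conditions. This is essentially the same proof as the paper's (which works with explicit minors on $\bA_K^{r(n+1)}$ rather than the Weil restriction, cf.\ Lemma~\ref{lemma:infinite-field} and Proposition~\ref{prop:existence-minors}), just in more invariant language.

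The finite-field case is where the proposal has a genuine gap, and unfortunately it is the part that carries all the weight: the theorem is only new over $\F_q$, and above all for $q=2$. Your plan is a completely different route from the paper's --- you propose a coordinate-by-coordinate inductive construction, whereas the paper proves the finite-field case by point counting (the incidence correspondence of Section~\ref{sec:incidence}, the irreducible-component count of Section~\ref{sec:avoid}, the inclusion--exclusion of Section~\ref{sec:inc-exc}, and a computer search for the finitely many exceptional $(q,n,d,r)$). The difficulty is that your construction hinges on the unproved claim that, given a nested flag $V_0 \supseteq V_1 \supseteq \cdots \supseteq V_d$ in $\F_{q^r}$ of the correct dimensions, one can choose $t\in\F_{q^r}$ so that $\dim_{\F_q}\sum_j t^j V_j$ equals $\min\bigl(r,\sum_j\dim V_j\bigr)$, and that this can be done \emph{simultaneously} for the flags attached to every degree $e\le d$, \emph{and} that the resulting spans again form flags of the right shape so the induction can continue to $\bP^{n+1}$. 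You flag this yourself as ``the technical heart of the argument,'' but as written it is an assertion, not a proof, and it is not obviously true: the recursion $\sum_{j\le d}t^jV_j = V_0 + t\bigl(\sum_{j\le d-1}t^jV_{j+1}\bigr)$ requires at each stage that $V_0 \cap t\bigl(\cdots\bigr)$ have exactly the expected dimension, and the bad locus of $t$ (stabilizer subfields $\{s: sV\subseteq V\}$ of the various partial sums, and more generally the $t$ causing unexpected intersections) accumulates across the nested degrees. For $q=2$ there is very little room in $\F_{2^r}$ to avoid all the bad sets, which is precisely the regime that resisted the authors of \cite{AGR24} and forced the three distinct counting strategies in this paper. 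Until that flag lemma is established --- including the compatibility needed to carry the strengthened hypothesis from $\bP^{n-1}$ to $\bP^n$ --- the proposal does not prove Theorem~\ref{thm:main} over finite fields.

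One smaller caveat: your remark that for large $q$ the bound $|Y(\F_q)| = O(q^{rn-1})$ disposes of the case is true in spirit, but the implied constant depends on the number and degrees of components of $Y$, which grow with $n$ and $d$; making this effective is exactly the content of Proposition~\ref{prop:less than q}, and it is needed to leave only finitely many exceptional cases.
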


In particular, we recover Theorem~\ref{thm:AGR} as a special case when $r=m$ and $|K|>2$. The case $r = m$ and $|K| = 2$, which was left open in \cite{AGR24}, ends up being the most challenging case in the proof of Theorem~\ref{thm:main}. To address this case, we combine various ideas from arithmetic geometry to handle instances where either $d$ or $n$ is sufficiently large; see Section~\ref{sec:proof outline} for a summary. This theoretical approach proves the result for all but finitely many pairs $(n, d)$, which can then be checked explicitly by a computer search.

We first observe that the right-hand side of equation~\eqref{eq: expected dim} is a lower bound for the left-hand side for every $P\in\bP^n(L)$.
Given any such point, the set of polynomials $F\in\mathcal{S}_{n,d}(K)$ satisfying $F(P)=0$ is a subspace of codimension at most $r$. Indeed, we have $\leq r$ linear constraints on $F$ coming from the fact that $F$ must vanish at each of the $\leq r$ Galois conjugates of $P$, defining a subspace of $\mathcal{S}_{n,d}(K)\otimes_K L=\mathcal{S}_{n,d}(L)$ of codimension at most $r$; this space is Galois-invariant and hence descends to a codimension $\leq r$ subspace of $\mathcal{S}_{n,d}(K)$. 
Thus 
\begin{equation*}
    \dim_K \{F\in\mathcal{S}_{n,d}(K) \ | \ F(P)=0\}\geq \max(m-r,0).
\end{equation*}
Theorem~\ref{thm:main} states that the minimal value is always attained by some point $P$.

There are two possible reasons why equation~\eqref{eq: expected dim} may fail for a given $P\in\bP^n(L)$. The first is arithmetic: $P$ may be defined over a subfield of $L$ with degree $r' < r$ over $K$. The second is geometric: Fix any $(\max(m-r,0)+1)$-dimensional subspace of $\mathcal{S}_{n,d}(K)$, and let $V$ denote the common vanishing locus of all degree $d$ forms in this subspace. Then any point in $V(L)$ is, by construction, a point for which equation~\eqref{eq: expected dim} does not hold. Theorem \ref{thm:main} is equivalent to the statement that $\bP^n(L)$ is not contained in the union of all $V(L)$ constructed in this way.

\subsection{Applications to linear systems} Let $\mathbb{P}^n$ be the $n$-dimensional projective space over $K=\F_q$. Let $\mathcal{P}$ be any property that a hypersurface in $\mathbb{P}^n$ may satisfy. For instance, $\mathcal{P}$ might be ``is smooth," ``is irreducible," or ``is geometrically irreducible." This naturally leads to the following question. 

\begin{question}\label{quest:linear-systems}
 What is the maximum (projective) dimension of a linear system $\mathcal{L}$ of hypersurfaces in $\mathbb{P}^n$ such that every $\mathbb{F}_q$-member of $\mathcal{L}$ satisfies property $\mathcal{P}$?    
\end{question}

Question~\ref{quest:linear-systems} has been studied in various settings; see, for example, \cite{AGR23}, \cite{AGR24}, \cite{AGY23} for the cases when $\mathcal{P}$ represents the property of being smooth, irreducible, non-blocking, respectively. We phrase Question~\ref{quest:linear-systems} in concrete terms. We want to determine the maximum value of an integer $t$ such that there exist polynomials $F_0, F_1, ..., F_{t}$ in $n+1$ homogeneous variables such that the hypersurface $X_{[a_0:a_1:\cdots:a_t]}$ defined by the equation $a_0 F_0 + a_1 F_1 + \cdots + a_t F_{t}=0$ satisfies property $\mathcal{P}$ for \emph{every} choice $[a_0:a_1:\cdots:a_t]\in\mathbb{P}^{t}(\mathbb{F}_q)$. In this case, the desired linear system is $\mathcal{L}=\langle F_0, ..., F_t\rangle\cong \mathbb{P}^{t}$. 

When $\mathcal{P}$ denotes ``is irreducible over $\mathbb{F}_q$", Asgarli, Ghioca and Reichstein \cite[Theorem 1.3]{AGR24} answered Question~\ref{quest:linear-systems}: the maximum (projective) dimension of a linear system $\mathcal{L}$ of degree $d$ hypersurfaces where each $\mathbb{F}_q$-member is irreducible over $\mathbb{F}_q$ is $\binom{n+d}{n}-\binom{n+d-1}{n}-1$. We generalize this result by weakening the irreducibility requirement to allow each $\mathbb{F}_q$-member to contain an irreducible component of a large degree.

\begin{theorem}\label{thm:system-prescribed-degree} Let $d\geq 2$ and $2\leq i\leq d$. There exists a linear system $\mathcal{L}$ of degree $d$ hypersurfaces in $\bP^n/\F_q$ with (projective) dimension equal to
$\binom{n+d}{n}-\binom{n+i-1}{n}-1$
such that each $\mathbb{F}_q$-member of $\mathcal{L}$ has an $\mathbb{F}_q$-irreducible component of degree at least $i$. Moreover, the result is sharp: $\dim(\mathcal{L})$ cannot be increased to $\binom{n+d}{n}-\binom{n+i-1}{n}$.
\end{theorem}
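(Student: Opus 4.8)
The plan is to build $\mathcal{L}$ directly out of Theorem~\ref{thm:main} applied in degree $i-1$, and then to prove sharpness by an elementary dimension count inside $\mathcal{S}_{n,d}(\F_q)$.

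\emph{Construction.} I would set $r\colonequals\binom{n+i-1}{n}=\dim_{\F_q}\mathcal{S}_{n,i-1}(\F_q)$. Since $i\geq 2$, the integer $i-1$ is at least $1$, so Theorem~\ref{thm:main} applies with $(n,i-1)$ in place of $(n,d)$ and with $L=\F_{q^r}$, producing $P\in\bP^n(\F_{q^r})$ with
\[
\dim_{\F_q}\{G\in\mathcal{S}_{n,i-1}(\F_q)\ |\ G(P)=0\}=\max(r-r,0)=0 .
\]
Equivalently, $P$ lies on no hypersurface of degree $i-1$ over $\F_q$, and hence on no hypersurface of degree $e\leq i-1$ over $\F_q$ (multiply a degree $e$ form by $x_0^{\,i-1-e}$). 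In particular $P$ lies on no hyperplane, so after rescaling coordinates we may assume $x_0(P)=1$, and then $\F_q(P)=\F_q(x_1(P),\dots,x_n(P))$ has degree $r'\mid r$ over $\F_q$. I would then take $\mathcal{L}\colonequals\bP(W)$, where $W\colonequals\{F\in\mathcal{S}_{n,d}(\F_q)\ |\ F(P)=0\}$.

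\emph{Dimension and the component property.} Evaluation at $P$ is an $\F_q$-linear map $e_e\colon\mathcal{S}_{n,e}(\F_q)\to\F_q(P)$, $F\mapsto F(P)$, whose kernel is $\{F:F(P)=0\}$ (note $F(P)=0$ forces $F(\sigma P)=0$ for every $\sigma\in\Gal(\F_{q^r}/\F_q)$, since $F$ has coefficients in $\F_q$). The choice of $P$ makes $e_{i-1}$ injective, so $r=\dim_{\F_q}\mathcal{S}_{n,i-1}(\F_q)\leq\dim_{\F_q}\F_q(P)=r'\leq r$; hence $r'=r$ and $e_{i-1}$ is an isomorphism. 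Because $x_0(P)=1$ we have $e_d(x_0^{\,d-i+1}G)=e_{i-1}(G)$ for every $G$, so $e_d$ is already surjective on the subspace $x_0^{\,d-i+1}\mathcal{S}_{n,i-1}(\F_q)$; therefore $\dim_{\F_q}W=m-r$ and $\dim\mathcal{L}=\binom{n+d}{n}-\binom{n+i-1}{n}-1$, as claimed. For the component property, take a nonzero $F\in W$ and factor it into $\F_q$-irreducible forms; since $F(P)=0$, some irreducible factor $F_0$ satisfies $F_0(P)=0$. If $\deg F_0\leq i-1$, then $x_0^{\,i-1-\deg F_0}F_0$ would be a nonzero form of degree $i-1$ over $\F_q$ vanishing at $P$, contradicting the choice of $P$; thus $\deg F_0\geq i$, and $V(F_0)$ is an $\F_q$-irreducible component of $V(F)$ of degree at least $i$.

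\emph{Sharpness.} Suppose $W'\subseteq\mathcal{S}_{n,d}(\F_q)$ is a subspace of dimension $m-r+1$ all of whose nonzero members define hypersurfaces each having an $\F_q$-irreducible component of degree at least $i$. Consider $U\colonequals x_0^{\,d-i+1}\mathcal{S}_{n,i-1}(\F_q)\subseteq\mathcal{S}_{n,d}(\F_q)$; here $d-i+1\geq 1$ and $\dim_{\F_q}U=\binom{n+i-1}{n}=r$. Every nonzero element of $U$ has the form $x_0^{\,d-i+1}G$ with $\deg G=i-1$, so all of its $\F_q$-irreducible components --- the hyperplane $\{x_0=0\}$ together with the components of $V(G)$ --- have degree $\leq i-1<i$. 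Since $\dim_{\F_q}W'+\dim_{\F_q}U=(m-r+1)+r>m$, the intersection $W'\cap U$ is nonzero, producing a member of $W'$ with no component of degree $\geq i$, a contradiction. Hence $\dim\mathcal{L}$ cannot be raised to $\binom{n+d}{n}-\binom{n+i-1}{n}$.

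The real content is carried by Theorem~\ref{thm:main}, which --- unlike Theorem~\ref{thm:AGR} --- is available when $q=2$, so the argument above works uniformly in $q$. The only point requiring care internal to this proof is verifying that the Galois orbit of $P$ imposes independent conditions on degree $d$ forms, so that $\dim_{\F_q}W$ equals $m-r$ exactly rather than something larger; this is precisely where the degree-raising step $G\mapsto x_0^{\,d-i+1}G$ and the normalization $x_0(P)=1$ are used.
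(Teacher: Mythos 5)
Your proof is correct and follows essentially the same route as the paper's: you apply Theorem~\ref{thm:main} with $r=\binom{n+i-1}{n}$ to produce a point $P$ on no degree-$(i-1)$ hypersurface over $\F_q$, take $\mathcal{L}$ to be the system of degree-$d$ hypersurfaces through $P$, and prove sharpness by intersecting any larger system with $x_0^{d-i+1}\mathcal{S}_{n,i-1}(\F_q)$. The paper actually proves the slightly more general Theorem~\ref{thm:system-prescribed-factors} (property $\mathcal{P}_{i,j}$) and specializes to $j=0$; your argument is the $j=0$ case stated directly, with the small bonus that you pin down $\dim_{\F_q}W$ exactly (via $e_{i-1}$ being an isomorphism onto $\F_q(P)$), whereas the paper establishes only the lower bound on $\dim\mathcal{L}_0$ in the existence step and lets the sharpness half supply the matching upper bound.
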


Section~\ref{sec:linear-systems} presents a more general result (Theorem~\ref{thm:system-prescribed-factors}) that further extends Theorem~\ref{thm:system-prescribed-degree}. We include Theorem~\ref{thm:system-prescribed-degree} here in the introduction to motivate our results, as it is simpler to present and illustrates how our work generalizes the corresponding result in \cite{AGR24}. The same bound holds if we replace $\F_q$ with a number field, but not if we replace $\F_q$ with an arbitrary field; see Remark~\ref{rmk:replace Fq}. We also find an exact answer to Question~\ref{quest:linear-systems} when $\mathcal{P}$ stands for ``is reduced" (see Corollary~\ref{cor:linear-systems-reduced}).

\subsection{Proof outline of Theorem~\ref{thm:main}}\label{sec:proof outline}

We prove Theorem~\ref{thm:main} for infinite fields $K$ in Section~\ref{sec:infinite}, following the method in \cite{AGR24}. For the rest of this proof outline, suppose $K=\F_q$ for $q\geq 2$ a prime power, so $L=\F_{q^r}$. In Section \ref{sec:finite setup}, we introduce some of the tools that will be used throughout the proof of the finite field case, including a reduction to the case $n\geq 2$, $d\geq 2$, and $r>\binom{n-1+d}{n-1}$ in Section \ref{sec:special cases reductions}.

The first method we use to study Theorem \ref{thm:main} in the finite field case is to count incidences between points and hypersurfaces (Section~\ref{sec:incidence}). More precisely, we count pairs $(P,H)$, where $P\in\bP^n(\F_{q^r})$ lies on a degree $d$ hypersurface $H$, in two different ways. First, we bound the number of points on each hypersurface and sum this bound over all hypersurfaces.  Second, we count the hypersurfaces passing through each point and add up this count over all points. Since these two counts must agree, it follows that not too many points can lie on a number of hypersurfaces that is larger than expected. This argument is carried out in the proof of Proposition~\ref{prop:less than q}. Using this bound, we prove Theorem~\ref{thm:main} for all but finitely many cases with $q \geq 3$, as well as for all but finitely many cases with $q = 2$ and $r \neq m$; this is carried out in Section~\ref{subsec: checking inequality}. The remaining exceptional cases are verified explicitly in Appendix~\ref{sec:computer}.

Unfortunately, Proposition~\ref{prop:less than q} is unhelpful in the case $q=2$ and $r=m$.
To obtain a more refined bound, we account for the points lying in the intersection between distinct hypersurfaces. Depending on the relative size of the parameters $n$ and $d$, we apply different methods to understand the structure of these intersections. The proof strategy needed for different values of $n$ and $d$ is summarized in Figure~\ref{fig:r=m cases} below.

\begin{figure}[h!]
    \centering
    \begin{tikzpicture}[scale=0.47]
    \def\rows{12}
    \def\cols{16}

    \fill[pattern=north west lines] (1,13) rectangle ++ (18,-1);
    \fill[pattern=north east lines] (1,13) rectangle ++ (1,-12);
    \fill[pattern=horizontal lines] (4,12) rectangle ++ (15,-1);
    \fill[pattern=horizontal lines] (2,8) -- (6,8) -- (6,7) -- (7,7) -- (7,6) -- (8,6) -- (8,5) -- (9,5 ) -- (9, 4) -- (10,4) -- (10,3) -- (11,3) -- (11,2) -- (12,2) -- (12,1) -- (2,1) -- cycle;
    \fill[pattern=vertical lines] (19,11) -- (17,11) -- (17,9) -- (13, 9) -- (13,7) -- (11, 7) -- (11,5) -- (10,5) -- (10,1) -- (19,1) -- cycle;

    \draw[black] (1, 1.5) -- (1, \rows + 1) -- (18.5, \rows + 1);

    \node at (15, \rows + 1.5) {$\cdots$};
    \node at (16.5, \rows + 1.5) {35};
    \node at (17.5, \rows + 1.5) {36};
    \node at (18.5, \rows + 1.5) {$\cdots$};

    \node at (9,9) {Appendix \ref{sec:computer}};
    \node[anchor=west] at (19, \rows + .5) {$\leftarrow$ Remark \ref{rmk: simpler cases}};
    \node[anchor=west] at (19, \rows - .5) {$\leftarrow$ Theorem \ref{thm:q=d=2}};
    \node[anchor=west] at (19, 6) {$\leftarrow$ Theorem \ref{thm:inc-exc}};
    \node[anchor=north, align=center] at (1.5, 1) {$\uparrow$ \\ Section \ref{sec:special cases reductions}};
    \node[anchor=north, align=center] at (7, 1) {$\uparrow$ \\ Theorem \ref{thm:q 2 case}};

    \node at (10, 14.5) {$n$};
    \node at (-.5, 7) {$d$};
    \foreach \x in {1,...,13} {
        \node at (\x + 0.5, \rows + 1.5) {\x};
    }
    \foreach \y in {1,...,11} {
        \node at (0.5, 13.5 - \y) {\y};
    }
    \node at (0.5, 1.5) {$\vdots$};
    \end{tikzpicture}
    \caption{How to prove Theorem \ref{thm:main} for $K=\F_2$, each given $(n,d)$, and $r=m=\binom{n+d}{n}$.}
    \label{fig:r=m cases}
\end{figure}

To understand the difficulty, observe that when $r=m$, it suffices to count the $\F_{q^m}$-points lying on the union of all hypersurfaces over $\F_q$ and show that the total is less than the number of points in $\bP^n(\F_{q^m})$. If the average degree $d$ hypersurface contains $q^{m(n-1)}(1 + \delta)$ points for some average error term $\delta$, and we sum this bound over all $\frac{q^m - 1}{q - 1}$ degree $d$ hypersurfaces, we would need to prove that:
\begin{equation}\label{eq: summary to prove}
    q^{m(n-1)}(1+\delta)\left(\frac{q^m-1}{q-1}\right)\leq \frac{q^{m(n+1)}-1}{q^m-1},
\end{equation}
which implies
\[1+\delta\leq (q-1)+O(q^{-m}).\]
When $q = 2$, the inequality fails unless $\delta$ is bounded by a constant multiple of $2^{-m}$. Proving such a strong bound on the average number of $\F_{q^m}$-points over the set of hypersurfaces of degree $d\geq 2$ over $\F_q$ seems to be beyond reach. Instead, we use alternate methods that account for points lying in intersections of hypersurfaces.

Our second method, discussed in Section~\ref{sec:avoid}, focuses on counting points on irreducible components rather than the full (possibly reducible) hypersurfaces. This allows us to sharpen the error term $\delta$, as point-counting bounds for irreducible varieties are generally stronger than those for general projective varieties. Moreover, it significantly reduces the number of hypersurfaces to consider, as each irreducible hypersurface of degree $e< d$ occurs as a component in a very large number of degree $d$ hypersurfaces. This reduction decreases the left-hand side of inequality~\eqref{eq: summary to prove}, establishing the desired bound when $d$ is sufficiently large compared to $n$ (Theorem \ref{thm:q 2 case}). The approach also works for $d = 2$, as it yields extremely sharp bounds on $\delta$ in this case (Theorem~\ref{thm:q=d=2}). This approach also provides a much shorter proof of Theorem~\ref{thm:AGR} when $K$ is finite.

The third method, discussed in Section~\ref{sec:inc-exc}, applies the inclusion-exclusion principle. By adding the points on each hypersurface, subtracting those on pairwise intersections, and adding those on triple intersections, we obtain an upper bound on the total number of points in the union of hypersurfaces. For this method to be effective, we need a relatively strong upper bound on the average number of points on the intersections of three hypersurfaces. Specifically, we must show that ``most" triple intersections are irreducible, as we have much stronger point-counting bounds for irreducible varieties. If a variety is reducible, then the intersection of two of its components forms a locus of singular points with large dimension. So, to bound the number of triple intersections that are reducible, it suffices to bound the number of triple intersections with large singular locus.
We achieve this by adapting an argument due to Poonen~\cite{P04}. The detailed analysis is carried out in Section~\ref{sec:singular locus}. Using this bound, which is valid only when $d \geq 3$ and $n$ is sufficiently large, we get a nontrivial upper bound on the number of points in the union of degree $d$ hypersurfaces (Theorem~\ref{thm:inc-exc}).

\medskip

\textbf{Organization of the paper.}
In Section~\ref{sec:infinite}, we prove Theorem~\ref{thm:main} for infinite fields $K$. In Section~\ref{sec:finite setup}, we prove several preliminary estimates over finite fields. We then apply the three methods outlined above to prove Theorem~\ref{thm:main} for finite fields in Sections~\ref{sec:incidence},~\ref{sec:avoid}, and~\ref{sec:inc-exc}. We discuss some applications of our main result to linear systems in Section~\ref{sec:linear-systems}. 
Finally, in Section~\ref{sec:conj}, we discuss a result and a conjecture related to the number of points $P$ satisfying the conclusion of Theorem~\ref{thm:main}. 

\section{Proof for infinite fields}\label{sec:infinite}

We structure our proof following \cite[Section 2]{AGR24}, which handles the case $r = m \colonequals \binom{n + d}{n}$.
We begin by generalizing \cite[Lemma 2.1]{AGR24}. 

\begin{lemma}\label{lemma:infinite-field} Let $K$ be an infinite field and suppose $r\in\mathbb{N}$ and $m=\binom{n+d}{n}$. There exist points $P_1, ..., P_r\in\mathbb{P}^n(K)$ such that 
$$
\dim_K \{F\in \mathcal{S}_{n, d}(K) \ | \ F(P_i)=0 \text{ for each } 1\leq i\leq r\} = \max(m-r,0).
$$
\end{lemma}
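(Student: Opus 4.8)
The plan is to prove the existence of the $r$ points $P_1,\dots,P_r$ by induction on $r$, choosing each new point so that it fails to lie on a particular hypersurface determined by the points chosen so far. The base case $r=0$ (or $r=1$) is trivial. For the inductive step, suppose $r \le m$ and that we have already found $P_1,\dots,P_{r-1} \in \bP^n(K)$ imposing independent conditions, so that $W\colonequals \{F \in \mathcal{S}_{n,d}(K) : F(P_i) = 0,\ 1 \le i \le r-1\}$ has dimension $m-(r-1) \ge 1$. We want $P_r \in \bP^n(K)$ with $F(P_r) \ne 0$ for some $F \in W$; equivalently, $P_r$ should not lie in the common zero locus $Z(W) \subseteq \bP^n$ of all forms in $W$. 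Since $K$ is infinite, it suffices to show $Z(W) \ne \bP^n$, i.e.\ that not every degree $d$ form in $W$ vanishes identically — but $W \ne 0$, so this is immediate, and then $\bP^n(K) \not\subseteq Z(W)$ because $K$ is infinite and $Z(W)$ is a proper closed subvariety. Pick $P_r$ to be any $K$-point outside $Z(W)$; then the conditions imposed by $P_1,\dots,P_r$ are independent, completing the induction up to $r = m$.

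Once $r = m$ is reached, the space $\{F \in \mathcal{S}_{n,d}(K) : F(P_i) = 0,\ 1 \le i \le m\}$ has dimension $0 = \max(m-m,0)$. For $r > m$, we simply take $P_1, \dots, P_m$ as above and let $P_{m+1}, \dots, P_r$ be arbitrary $K$-points (which exist since $K$ is infinite); adding more vanishing conditions to the zero space keeps it zero, so the dimension remains $0 = \max(m-r,0)$. This disposes of all cases $r \ge 1$ at once.

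The only slightly delicate point — really the crux of the argument — is the claim that a proper Zariski-closed subset $Z \subsetneq \bP^n$ cannot contain all of $\bP^n(K)$ when $K$ is infinite. This is standard: if $G \in \mathcal{S}_{n,d}(K)$ is a nonzero form vanishing on $Z(W)$-candidate, one shows $G$ cannot vanish at every point of $K^{n+1}$, for instance by induction on $n$ using the fact that a nonzero one-variable polynomial over an infinite field has finitely many roots; then homogenizing/projectivizing shows $G$ has a nonvanishing point in $\bP^n(K)$. Concretely, here we already have a nonzero $F \in W$, and we just need one $K$-point where it does not vanish. I would either cite this as well known or include the short inductive argument. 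Everything else is bookkeeping: tracking that "independent conditions" is preserved at each step (equivalently, that the dimension drops by exactly one each time a point outside $Z(W)$ is adjoined, which holds because adjoining $P_r \notin Z(W)$ adds a linear functional on $W$ that is not identically zero).

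I expect no real obstacle here; the lemma is essentially the classical statement that points in general position exist over an infinite field, and the proof above mirrors \cite[Lemma 2.1]{AGR24} with $r$ in place of $m$. The one thing to be careful about in writing it up is to handle uniformly the three regimes $r < m$, $r = m$, and $r > m$, which the $\max(m-r,0)$ formulation is designed to accommodate.
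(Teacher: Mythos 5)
Your proof is correct and follows essentially the same inductive strategy as the paper: at each step you pick a nonzero form $F$ vanishing at the previously chosen points and use the infinitude of $K$ to find a next point where $F$ does not vanish, then dispose of $r>m$ by reducing to $r=m$. The only cosmetic difference is that you phrase the inductive step in terms of avoiding the common zero locus $Z(W)$ rather than directly naming a single nonzero $F\in W$, which the paper does.
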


\begin{proof} 
If $r>m$, then the lemma follows immediately from the case $r=m$, so we may assume $r\leq m$.
We pick the points $P_1, \ldots, P_r$ inductively to ensure:
\begin{equation}\label{eq:infinite-field-independence}
\dim_K \{F\in \mathcal{S}_{n, d}(K) \ | \ F(P_i)=0 \text{ for each } 1\leq i\leq j\} = m-j
\end{equation}
for each $1\leq j\leq r$. Choose $P_1\in \mathbb{P}^n(K)$ arbitrarily. The condition $F(P_1)=0$ imposes exactly one linear condition, so equation~\eqref{eq:infinite-field-independence} holds for $j=1$. For $1\leq j<r$, suppose $P_1, \ldots, P_{j}$ are chosen according to equation~\eqref{eq:infinite-field-independence}. Pick a nonzero $F\in \mathcal{S}_{n, d}(K)$ satisfying $F(P_i)=0$ for $1\leq i\leq j$; such an $F$ exists because $m-j>0$. Since $K$ is infinite, there exists $P_{j+1}\in \mathbb{P}^n(K)$ such that $F(P_{j+1})\neq 0$. Then $P_1, ..., P_{j+1}$ satisfy the desired equality \eqref{eq:infinite-field-independence}. 
\end{proof}

Next, we generalize \cite[Proposition 2.2]{AGR24} to our setting. 

\begin{proposition}\label{prop:existence-minors}
Let $L$ be a commutative algebra over a field $K$, and fix an isomorphism $L\simeq K^r$ as vector spaces over $K$. Let $n,d\geq 1$ and $m=\binom{n+d}{n}$. Then there exist homogeneous polynomial functions $H_1, ..., H_t$ on $\mathbb{A}_K^{r(n+1)}$ satisfying the following condition: if $K'/K$ is any field extension, $L'\colonequals L\otimes_K K'$, and $P\in \mathbb{A}_K^{n+1}(L')$, we have
\begin{align}\label{eq:existence-minors}
\dim_{K'} \{F\in \mathcal{S}_{n, d}(K') \ | \  F(P)=0  \} > \max(m - r, 0)
\end{align}
if and only if $H_i(\phi(P))=0$ for each $1\leq i\leq t$, where $\phi\colon \bA_K^{n+1}(L')\to \bA_K^{r(n+1)}(K')$ is the isomorphism induced by the fixed isomorphism $L\to K^r$.
\end{proposition}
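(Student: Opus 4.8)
The plan is to set this up as a statement about when a certain matrix drops rank, since ``$\dim_{K'}$ of the vanishing space exceeds $\max(m-r,0)$'' is exactly a rank condition, and rank conditions are cut out by vanishing of minors, which are polynomial functions. First I would fix an ordered basis $\mu_1,\dots,\mu_m$ of the monomials of degree $d$ in $x_0,\dots,x_n$, so that a form $F\in\mathcal S_{n,d}(K')$ corresponds to its coefficient vector $c\in (K')^m$, and $F(P)=0$ becomes the single linear equation $\sum_j c_j\,\mu_j(P)=0$ with $\mu_j(P)\in L'$. Writing $P=(a_0,\dots,a_n)$, each $\mu_j(P)$ is a homogeneous polynomial of degree $d$ in the $a_i$ with coefficients in $\ZZ$ (in fact a monomial in the $a_i$). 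Now use the fixed $K$-vector-space isomorphism $L\cong K^r$: under $\phi$, the coordinates $a_i\in L'$ become $r$-tuples $(a_{i,1},\dots,a_{i,r})\in (K')^r$, and for each $\mu_j$ the element $\mu_j(P)\in L'$ decomposes into $r$ components $\mu_j(P)_k\in K'$, each of which is a homogeneous polynomial of degree $d$ in the $r(n+1)$ variables $a_{i,k}$ with coefficients in $K$. This is the key reduction: the single $L'$-linear condition $F(P)=0$ is equivalent to $r$ separate $K'$-linear conditions $\sum_j c_j\,\mu_j(P)_k=0$ for $k=1,\dots,r$.

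Next I would package these into a matrix. Let $M(P)$ be the $r\times m$ matrix over $K'$ whose $(k,j)$ entry is $\mu_j(P)_k$; then $\{F\in\mathcal S_{n,d}(K')\mid F(P)=0\}$ is precisely the kernel of the linear map $(K')^m\to (K')^r$ given by $c\mapsto M(P)c$, so its dimension is $m-\operatorname{rank}M(P)$. Consequently
\begin{equation*}
\dim_{K'}\{F\in\mathcal S_{n,d}(K')\mid F(P)=0\} > \max(m-r,0)
\iff \operatorname{rank}M(P) < \min(r,m).
\end{equation*}
Indeed: if $r\le m$ the inequality reads $m-\operatorname{rank}M(P)>m-r$, i.e.\ $\operatorname{rank}M(P)<r$; if $r>m$ it reads $m-\operatorname{rank}M(P)>0$, i.e.\ $\operatorname{rank}M(P)<m$. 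In either case $\operatorname{rank}M(P)<s$ where $s\colonequals\min(r,m)$, and this holds if and only if every $s\times s$ minor of $M(P)$ vanishes. Let $H_1,\dots,H_t$ be the list of all $s\times s$ minors of the matrix $M$ whose entries are regarded as polynomials in the $r(n+1)$ coordinate functions on $\bA_K^{r(n+1)}$; since each entry $\mu_j(P)_k$ is a homogeneous polynomial of degree $d$ in those coordinates with coefficients in $K$, each minor $H_i$ is a homogeneous polynomial of degree $ds$ with coefficients in $K$, hence a homogeneous polynomial function on $\bA_K^{r(n+1)}$. By construction $H_i(\phi(P))=0$ for all $i$ precisely when $\operatorname{rank}M(P)<s$, which is exactly condition~\eqref{eq:existence-minors}. (If $s=0$, which cannot happen since $r,m\ge 1$, one takes the empty list; and if $m<s$ there are simply no such minors and $t=0$, consistent with the condition never holding when it shouldn't — but again $s=\min(r,m)\le m$ so this degenerate case does not arise.)

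The only genuinely delicate point is making sure the matrix $M$ is defined \emph{once and for all over $K$}, independently of $K'$, and that base change to $K'$ commutes with everything. Here the hypotheses help: $L$ is a $K$-algebra with a fixed $K$-basis, so the structure constants of $L$ lie in $K$, and $L' = L\otimes_K K'$ inherits this basis and these structure constants. The decomposition of $\mu_j(P)\in L'$ into its $r$ components in $K'$ is computed purely from these structure constants and the coordinates $a_{i,k}$, so the polynomials expressing $\mu_j(P)_k$ in terms of the $a_{i,k}$ have coefficients in $K$ and do not depend on $K'$ — base change along $K\to K'$ sends the universal matrix over $\bA_K^{r(n+1)}$ to the specialized matrix $M(P)$. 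Thus the single list $H_1,\dots,H_t$ of minors works uniformly for every field extension $K'/K$. I do not expect any serious obstacle beyond being careful to track that ``$F(P)=0$ in $L'$'' is literally equivalent to ``all $r$ basis-components vanish in $K'$'' — which is immediate because $\{$basis components$\}$ form coordinates on $L'$ as a $K'$-vector space — and that the rank-versus-dimension bookkeeping above correctly handles both regimes $r\le m$ and $r>m$.
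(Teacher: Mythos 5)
Your proof is correct and takes essentially the same approach as the paper: both reduce the dimension condition to a rank condition on a matrix whose entries are the $b_k$-components of the evaluated monomials (your $M(P)$ is simply the transpose of the paper's $U(y_{i,j})$), and both take the polynomials $H_i$ to be all maximal minors of size $\min(r,m)$. The verification that these minors are homogeneous of degree $d\cdot\min(r,m)$ with coefficients in $K$, uniform in the extension $K'$, is also the same.
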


\begin{proof} Denote by $M_1, \ldots, M_{m}$ the distinct monomials of degree $d$ in $x_0, \ldots, x_n$. We will show that a given point $P= (a_0,\ldots,a_n)\in \mathbb{A}_K^{n+1}(L')$ satisfies inequality~\eqref{eq:existence-minors} if and only if a certain $m\times r$ matrix has vanishing minors. To define this matrix, observe that the isomorphism $L\simeq K^r$ determines a basis $b_1,\ldots,b_r$ for $L$ over $K$, so for each $a_i\in L'$ we can write $a_i = y_{i, 1} b_1 + \cdots + y_{i, r} b_r$ for some $y_{i, j}\in K'$; the map $P\mapsto (y_{i,j})_{0\leq i\leq n, 1\leq j\leq r}$ defines the isomorphism $\phi$. Since $L$ is a $K$-algebra, each product $b_i b_j$ can be expressed as a $K$-linear combination of $b_1, ..., b_r$. Hence, for each $1\leq s\leq m$, we can express 
$$
M_s(P) = \eta_{s,1}(\phi(P)) b_1 + \cdots + \eta_{s, r}(\phi(P)) b_r
$$
where each $\eta_{s,k}(y_{i,j})$ is a homogeneous polynomial of degree $d$ in $y_{i,j}$ with coefficients in $K$, viewing $y_{i,j}$ as indeterminates for $0\leq i\leq n$ and $1\leq j\leq r$. Consider the $m\times r$ matrix
$$
U(y_{i,j}) = \begin{pmatrix} \eta_{1, 1}(y_{i,j}) & \eta_{1, 2}(y_{i,j}) & \cdots &\eta_{1, r}(y_{i,j}) \\ 
\eta_{2, 1}(y_{i,j}) & \eta_{2, 2}(y_{i,j}) & \cdots & \eta_{2, r}(y_{i,j}) \\
\vdots & \vdots & \ddots & \vdots \\ 
\eta_{m, 1}(y_{i,j}) & \eta_{m, 2}(y_{i,j}) & \cdots & \eta_{m, r}(y_{i,j}) \\
\end{pmatrix}.
$$
For our homogeneous functions $H_1,\ldots,H_t$ on $\mathbb{A}_K^{(n+1)r}$ we take all maximal minors of $U$: all $r\times r$ minors if $r\leq m$, and all $m\times m$ minors if $r>m$.

Now each $F\in \mathcal{S}_{n,d}(K')$ is a $K'$-linear combination of the monomials $M_i$, so there is a vector $(c_1,\ldots,c_m)\in (K')^m$ such that $F(P)=c_1 M_1(P)+\cdots +c_mM_m(P)$ for any $P\in \mathbb{A}_K^{n+1}(L')$. We therefore have $F(P)=0$ if and only if $(c_1,\ldots,c_m)U(\phi(P))=(0,\ldots,0)$, that is, $(c_1,\ldots,c_m)$ is in the kernel of $v\mapsto v U(\phi(P))$. If $U(\phi(P))$ has maximal rank $\min(m,r)$, then the dimension of this kernel is $m-\min(m,r)=\max(m-r,0)$. The kernel has greater dimension if and only if all the maximal minors vanish.
\end{proof}

We are now ready to prove our main theorem over infinite base fields.

\begin{proof}[Proof of Theorem~\ref{thm:main} when $K$ is infinite] Fix a basis for $L$ as a $K$-vector space, and let $H_1, H_2, ..., H_t$ be the homogeneous functions from Proposition~\ref{prop:existence-minors}. We will prove that at least one of these functions is not identically zero. By Lemma~\ref{lemma:infinite-field} applied to the algebraic closure $\overline{K}$ of $K$, there exist $P_1,\ldots,P_r\in\bA^{n+1}(\overline{K})$, none equal to the zero point, satisfying
\[\dim_{\overline{K}}\{F\in \mathcal{S}_{n,d}(\overline{K}) \ | \ F(P_j)=0\text{ for all }1\leq j\leq r\}=\max(m-r,0).\]
Since $L/K$ is separable, there exists an isomorphism of  $\overline{K}$-algebras from $L'=L\otimes_{K} \overline{K}$ to the $r$-fold direct product $\overline{K}\times \cdots \times \overline{K}$. Hence, there is a bijective correspondence between points $P\in \bA^{n+1}(L')$ and $r$-tuples of points $(P_1,\ldots,P_r)\in \bA^{n+1}(\overline{K})^r$, such that for $F\in \mathcal{S}_{n,d}(\overline{K})$ we have $F(P)=0$ if and only if $F(P_j)=0$ for all $1\leq j\leq r$. We therefore obtain a nonzero point $P\in \bA^{n+1}(L')$ satisfying
\[\dim_{\overline{K}}\{F\in \mathcal{S}_{n,d}(\overline{K}) \ | \ F(P)=0\}=\max(m-r,0).\]
By Proposition~\ref{prop:existence-minors} applied to $K'=\overline{K}$, we have $H_{\ell}(\phi(P))\neq 0$ for some $\ell$. This proves $H_\ell\neq 0$.

Since $K$ is infinite, we can find $(y_{i,j})\in \bA_K^{r(n+1)}(K)$ such that $H_\ell(y_{i,j})\neq 0$. 
So, by Proposition~\ref{prop:existence-minors} with $K'=K$, we have a point $\phi^{-1}(y_{i,j})\in \bA_K^{n+1}(L)$ that satisfies equation~\eqref{eq: expected dim}.
\end{proof}

\section{Preliminaries and setup for finite fields}\label{sec:finite setup}

 Let $q\geq 2$ be a prime power. From now on, we consider the case when $K = \FF_q$ is a finite field with $q$ elements. Let $n,d,r\geq 1$, and $m\colonequals\binom{n+d}{n}$. 

Let $\mathcal{S}_{n,d}=\mathcal{S}_{n,d}(\FF_q)$ denote the affine space of homogeneous degree $d$ polynomials in $n+1$ variables over $\FF_q$. Define
\begin{equation}\label{eq:mudef}
\mu(q,n,d,r)=\frac{\#\{P\in\bP^n(\FF_{q^r}) \ | \ \dim_{\FF_q}\{F\in \mathcal{S}_{n,d} \ | \ F(P)=0\}= \max(m-r,0)\}}{\#\bP^n(\FF_{q^r})}
\end{equation}
to be the proportion of $\FF_{q^r}$-points for which equation~\eqref{eq: expected dim} holds. Our main objective will be to derive a positive lower bound for $\mu$.

\subsection{Special cases and reductions}\label{sec:special cases reductions}

We first consider some simple cases. If $r=1$, then for all $q,n,d$, we have $\mu(q,n,d,1)=1$ since for every $P\in\bP^n(\FF_q)$ there exists $F\in\mathcal{S}_{n,d}$ that does not vanish at $P$. Henceforth, we assume $r\geq 2$. 

\begin{lemma}\label{lem:n=1}
For $r\geq 2$ and $n=1$ we have
\begin{align*}
    \mu(q,1,d,r)&= \frac{\#\{\theta\in \F_{q^r} \ | \ \theta\notin \F_{q^k}\text{ for every }k<\min(r,d+1)\}}{\#\bP^1(\F_{q^r})}.
\end{align*}    
\end{lemma}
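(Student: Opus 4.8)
The plan is to unwind what equation~\eqref{eq: expected dim} says when $n = 1$ and $P = [1 : \theta] \in \bP^1(\F_{q^r})$ (the point $[0:1]$ being covered separately, or absorbed by a symmetric argument). Here $m = \binom{1+d}{1} = d+1$, and $\mathcal{S}_{1,d}$ is the space of binary forms $F(x_0, x_1) = \sum_{j=0}^d c_j x_0^{d-j} x_1^j$ of degree $d$. Dehomogenizing, $F(1, \theta) = 0$ is equivalent to $\theta$ being a root of the univariate polynomial $f(t) = \sum_{j=0}^d c_j t^j \in \F_q[t]$ of degree $\leq d$. So the subspace $\{F \in \mathcal{S}_{1,d} : F(P) = 0\}$ is identified with the space of polynomials of degree $\leq d$ over $\F_q$ vanishing at $\theta$, which is the ideal $(g) \cap \{\deg \leq d\}$ where $g = g_\theta$ is the minimal polynomial of $\theta$ over $\F_q$, of degree $e := [\F_q(\theta) : \F_q]$.

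First I would compute this dimension exactly. If $e \leq d$, the polynomials of degree $\leq d$ divisible by $g$ form a space of dimension $(d+1) - e = m - e$; if $e > d$, the only such polynomial is $0$, so the dimension is $0 = \max(m - e, 0)$. In all cases the dimension equals $\max(m - e, 0) = \max(d + 1 - e, 0)$. Equation~\eqref{eq: expected dim} demands this equal $\max(m - r, 0) = \max(d + 1 - r, 0)$. Now I split into cases according to how $e$, $d+1$, and $r$ compare. Since $\theta \in \F_{q^r}$, we always have $e \mid r$, so $e \leq r$. If $r \leq d + 1$: then $\max(m-r,0) = d+1-r > 0$ (or $=0$ when $r = d+1$), and we need $d+1-e = d+1-r$ forced from $e \le r$, i.e. $e = r$; equivalently $\theta \notin \F_{q^k}$ for any $k < r$. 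If $r > d+1$: then $\max(m - r, 0) = 0$, and we need $\max(d+1-e,0) = 0$, i.e. $e \geq d+1$, i.e. $\theta \notin \F_{q^k}$ for any $k \leq d$, equivalently $\theta \notin \F_{q^k}$ for $k < d+1$. Combining, equation~\eqref{eq: expected dim} holds for $P = [1:\theta]$ exactly when $\theta \notin \F_{q^k}$ for every $k < \min(r, d+1)$. I would then observe the point $[0:1]$ corresponds to $F$ with $c_d = 0$, a hyperplane, giving dimension $m - 1 = d$; since $r \geq 2$, $\max(m-r,0) \le d - 1 < d$, so $[0:1]$ never satisfies~\eqref{eq: expected dim} — consistent with not being counted, as it would correspond to "$\theta = \infty \in \F_q$", excluded by the condition for any $k$.

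Finally I would assemble the count: the $\F_{q^r}$-points of $\bP^1$ satisfying~\eqref{eq: expected dim} are precisely the $[1:\theta]$ with $\theta$ ranging over $\{\theta \in \F_{q^r} : \theta \notin \F_{q^k} \text{ for every } k < \min(r, d+1)\}$, and dividing by $\#\bP^1(\F_{q^r})$ gives the stated formula for $\mu(q,1,d,r)$.

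There is no real obstacle here — the only thing to be careful about is the bookkeeping of the two $\max(\cdot, 0)$ expressions and making sure the divisibility constraint $e \mid r$ is used correctly in the regime $r \leq d+1$ (so that $e = r$ is the only way to hit the target dimension), and handling the point at infinity so it is correctly excluded. The identification of binary forms vanishing at $[1:\theta]$ with multiples of the minimal polynomial is the conceptual core, and it is routine.
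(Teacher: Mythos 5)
Your proof is correct and takes essentially the same approach as the paper: identify binary forms vanishing at $P$ with univariate polynomials of degree $\le d$ divisible by the minimal polynomial of $\theta$, compute the dimension as $\max(m-e,0)$ where $e$ is the degree of $\theta$, and compare against the target $\max(m-r,0)$. The only cosmetic differences are your coordinate convention ($[1:\theta]$ vs. the paper's $[\theta:1]$) and your explicit two-case split on whether $r\le d+1$, which the paper compresses into the single equivalence $k\ge\min(r,m)$.
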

\begin{proof}
The set of $F \in \mathcal{S}_{1, d}$ that vanish at $[1 : 0]$ has dimension $m - 1$ (where $m = d + 1$), not the expected $m - r$. Thus, it suffices to consider points of the form $P = [\theta : 1]$ for $\theta \in \mathbb{F}_{q^r}$. Let $k\leq r$ denote the degree of the minimal polynomial of $\theta$ over $\F_q$. Then   $F\in\mathcal{S}_{1,d}$ vanishes at $P$ if and only if its evaluation at $(x,1)$ is a multiple of the minimal polynomial of $\theta$. The set of all such $F$ has dimension $\max(m-k,0)$, and we have the expected dimension if and only if this equals $\max(m-r,0)$. Thus, the points of interest are those $P=[\theta:1]$ for $\theta\in\F_{q^r}$ such that the degree $k$ of $\theta$ over $\F_q$ satisfies  $\max(m-k,0)=\max(m-r,0)$, that is, $k \geq \min (r,m)=\min(r,d+1)$, as desired.
\end{proof}

Since there exists a primitive root in $\F_{q^r}$, it follows from Lemma~\ref{lem:n=1} that $\mu(q,1,d,r)>0$. Thus, from now on, we assume $n\geq 2$. It is also not hard to show that $\mu(q,n,1,r)>0$ in the case $d=1$. We postpone the argument to Remark \ref{rmk: simpler cases} simply because the method fits in well with the next section, but the proof does not logically depend on any ensuing results. Unless stated otherwise, we assume $d\geq 2$ for the remainder of the paper.

Finally, the following result allows us to reduce the number of dimensions $n$ we need to check for any fixed $q,d,r$.

\begin{lemma}\label{lem:reduce-n}
    For a prime power $q\geq 2$ and positive integers $n\geq n'\geq 1$, $d\geq 1$, and $1\leq r\leq m' \colonequals\binom{n'+d}{n'}$, if $\mu(q,n',d,r)>0$ then $\mu(q,n,d,r)>0$.
\end{lemma}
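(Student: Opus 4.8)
The plan is to reduce the dimension $n$ down to $n'$ by intersecting a good configuration of points in $\bP^{n'}$ with a linear embedding $\bP^{n'}\hookrightarrow\bP^n$, and then lift the resulting point back up to $\bP^n$. Concretely, suppose $\mu(q,n',d,r)>0$, so there is a point $Q\in\bP^{n'}(\F_{q^r})$ with $\dim_{\F_q}\{G\in\mathcal{S}_{n',d}\mid G(Q)=0\}=\max(m'-r,0)=m'-r$ (using $r\le m'$). I would first choose the standard linear embedding $\iota\colon\bP^{n'}\to\bP^n$ given by $[x_0:\cdots:x_{n'}]\mapsto[x_0:\cdots:x_{n'}:0:\cdots:0]$, and set $P=\iota(Q)\in\bP^n(\F_{q^r})$. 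The key point is to compare the evaluation-at-$P$ functional on $\mathcal{S}_{n,d}$ with the evaluation-at-$Q$ functional on $\mathcal{S}_{n',d}$ via the restriction map $\rho\colon\mathcal{S}_{n,d}\to\mathcal{S}_{n',d}$ that sets $x_{n'+1}=\cdots=x_n=0$. This $\rho$ is surjective (every monomial in $x_0,\dots,x_{n'}$ is hit), and $F(P)=(\rho F)(Q)$ for all $F\in\mathcal{S}_{n,d}$.

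The Galois-orbit interpretation makes this transparent: let $\sigma$ generate $\Gal(\F_{q^r}/\F_q)$ (or more precisely work with the degree-$r$ étale algebra structure as in the proof of Proposition~\ref{prop:existence-minors}), and note $\iota$ is defined over $\F_q$, so $P^{\sigma^j}=\iota(Q^{\sigma^j})$ for all $j$. Thus $F$ vanishes at all $\F_{q^r}$-conjugates of $P$ iff $\rho F$ vanishes at all conjugates of $Q$. I would then argue that the codimension of $W\colonequals\{F\in\mathcal{S}_{n,d}\mid F(P)=0\}$ in $\mathcal{S}_{n,d}$ equals the codimension of $W'\colonequals\{G\in\mathcal{S}_{n',d}\mid G(Q)=0\}$ in $\mathcal{S}_{n',d}$. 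Indeed, $W=\rho^{-1}(W')$: the inclusion $\subseteq$ holds because $F(P)=(\rho F)(Q)$, and $\supseteq$ likewise; since $\rho$ is surjective, $\operatorname{codim}_{\mathcal{S}_{n,d}}W=\operatorname{codim}_{\mathcal{S}_{n',d}}W'=r$ (the latter being $m'-(m'-r)=r$). Hence $\dim_{\F_q}W=m-r$, and since $r\le m'\le m$ we have $\max(m-r,0)=m-r$, so $P$ witnesses $\mu(q,n,d,r)>0$.

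The only genuine subtlety is the identity $W=\rho^{-1}(W')$ as sets cut out by $r$ linear conditions each, i.e.\ checking that no collapse of rank occurs under $\rho^{-1}$; this follows formally from surjectivity of $\rho$ and the fact that a linear functional $\ell$ on $\mathcal{S}_{n',d}$ pulls back to a functional $\ell\circ\rho$ on $\mathcal{S}_{n,d}$ of the same kernel-codimension, applied simultaneously to the $r$ conjugate-evaluation functionals (whose span has dimension equal to the codimension of $W'$, namely $r$). One should also note the edge case $r=1$ is already handled before this lemma, and the hypothesis $r\le m'$ is exactly what guarantees $\max(m'-r,0)=m'-r$ so that ``expected dimension for $Q$'' transports to ``expected dimension for $P$.'' I do not expect any serious obstacle here; the argument is a routine linear-algebra descent once the restriction map $\rho$ and the $\F_q$-rational embedding $\iota$ are in place.
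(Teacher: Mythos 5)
Your proposal is correct and takes essentially the same approach as the paper: both embed the good point $Q\in\bP^{n'}(\F_{q^r})$ into $\bP^n$ by padding with zero coordinates, observe that $F(P)=(\rho F)(Q)$ for the surjective restriction map $\rho\colon\mathcal{S}_{n,d}\to\mathcal{S}_{n',d}$, and conclude that the vanishing subspace at $P$ is the full preimage $\rho^{-1}(W')$, hence has the same codimension $r$. The Galois-conjugate discussion you include is a harmless elaboration but not needed, since the codimension claim follows directly from surjectivity of $\rho$ exactly as you note.
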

\begin{proof}
Let $\mathcal{S}_{n,d}$ and $\mathcal{S}_{n',d}$ denote the space of homogeneous degree $d$ polynomials over $\F_{q}$ in $n+1$ (respectively $n'+1$) variables.
    Pick $P' \in \bP^{n'}(\F_{q^r})$ such that 
    $\dim_{\F_q} \{F\in\mathcal{S}_{n',d} \ | \ F(P')=0\}= m'-r.$
    Let $P \in \bP^{n}(\F_{q^r})$ be the point such that the first $n'+1$ coordinates are the same as $P'$, and all remaining coordinates equal to $0$. We have a linear map $\psi\colon\mathcal{S}_{n,d}\to \mathcal{S}_{n',d}$ obtained by simply dropping all monomials involving variables beyond the first $n'+1$ variables, and $F(P)=0$ if and only if $\psi(F)(P')=0$. Since $\psi$ is surjective, the space of functions in $\mathcal{S}_{n,d}$ vanishing at $P$ has the same codimension as the space of functions in $\mathcal{S}_{n',d}$ vanishing at $P'$, which equals $r$ since $r\leq m'$. Thus, $P$ is a point such that equation~\eqref{eq: expected dim} holds, that is, $\mu(q,n,d,r)>0$.
\end{proof}

Now suppose $r\leq \binom{n-1+d}{n-1}$, and let $n'$ be the unique positive integer satisfying $\binom{n'-1+d}{n'-1}<r\leq \binom{n'+d}{n'}$; then $n'\leq n-1$. If we can prove $\mu(q,n',d,r)>0$, then we have $\mu(q,n,d,r)>0$ by Lemma \ref{lem:reduce-n}. Thus, without loss of generality, we may reduce to the case 
\begin{equation}
    r>\binom{n-1+d}{n-1}.
\end{equation}
In particular, since $n\geq 2$ we may reduce to the case $r\geq d+2$. In fact, for all $r< d+2$, Theorem \ref{thm:main} follows immediately by an interpolation argument (see for example \cite[Lemma 2.2]{AGY23}).

\subsection{A useful lemma from calculus}

At many points in the discussion below, we consider functions of the form
\begin{equation}\label{eq:poly exp form}
    f(q,t)=\sum_{i=1}^k f_i(t)q^{-g_i(t)},
\end{equation}
for polynomials $f_i$ and $g_i$ with positive leading coefficient and $q\geq 2$. We will frequently state without proof an upper bound on $f(q,t)$ that holds for all $q\geq q_0$ and $t\geq t_0$. These claims can be justified by a finite computation using the following lemma.

\begin{lemma}\label{lem: poly exp}
    Let $f(q,t)$ be as above, $M\in\RR$, and $t_0,z,q_0\in\ZZ$ with $t_0\leq z$ and $q_0>1$. Suppose that:
    \begin{itemize}
        \item $f_i(t),g_i(t)\geq 0$ for all integers $t\geq t_0$ and $i=1,\ldots, k$;
        \item $f_i(x)g_i'(x)\log q_0\geq f_i'(x)$ for all real $x\geq z$ and $i=1,\ldots, k$;
        \item $f(q_0,t)<M$ for all integers $t_0\leq t\leq z$.
    \end{itemize}
    Then $f(q,t)<M$ for all integers $t\geq t_0$ and $q\geq q_0$.
\end{lemma}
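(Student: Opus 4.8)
The plan is to prove the bound $f(q,t)<M$ for all integers $t\geq t_0$ and all $q\geq q_0$ by reducing the two-variable claim to finitely many one-variable checks. The guiding principle is monotonicity: under the stated hypotheses, $f(q,t)$ is nonincreasing in $q$ for $q\geq q_0$, and $f(q_0,t)$ is nonincreasing in $t$ for $t\geq z$. Granting these two monotonicity facts, the argument concludes quickly: for $t_0\leq t\leq z$ and $q\geq q_0$ we have $f(q,t)\leq f(q_0,t)<M$ by the first monotonicity statement together with the third hypothesis; and for $t>z$ and $q\geq q_0$ we have $f(q,t)\leq f(q_0,t)\leq f(q_0,z)<M$, using both monotonicity statements and again the third hypothesis (note $t_0\leq z$, so $f(q_0,z)<M$ is among the assumed checks). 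So the whole proof comes down to establishing the two monotonicity claims.

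For monotonicity in $q$: fix an integer $t\geq t_0$, so that $f_i(t)\geq 0$ and $g_i(t)\geq 0$ for every $i$. Each summand $f_i(t)q^{-g_i(t)}$ is, as a function of the real variable $q>0$, a nonnegative constant times $q^{-g_i(t)}$ with $g_i(t)\geq 0$, hence nonincreasing in $q$. A sum of nonincreasing functions is nonincreasing, so $f(q,t)\leq f(q_0,t)$ for all $q\geq q_0$. (If some $g_i(t)=0$ the corresponding term is simply constant, which is harmless.)

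For monotonicity of $f(q_0,t)$ in $t$ on $[z,\infty)$: treat $t$ as a real variable and differentiate term by term. For the $i$-th summand $h_i(t):=f_i(t)q_0^{-g_i(t)}$ we compute
\[
h_i'(t) = f_i'(t)q_0^{-g_i(t)} - f_i(t)g_i'(t)(\log q_0)q_0^{-g_i(t)} = q_0^{-g_i(t)}\bigl(f_i'(t) - f_i(t)g_i'(t)\log q_0\bigr).
\]
The factor $q_0^{-g_i(t)}$ is positive, and the second hypothesis says precisely $f_i(t)g_i'(t)\log q_0 \geq f_i'(t)$ for all real $t\geq z$, i.e. $f_i'(t)-f_i(t)g_i'(t)\log q_0\leq 0$. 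Hence $h_i'(t)\leq 0$ on $[z,\infty)$ for each $i$, so $f(q_0,\cdot)=\sum_i h_i$ has nonpositive derivative on $[z,\infty)$ and is therefore nonincreasing there; in particular $f(q_0,t)\leq f(q_0,z)$ for every integer $t\geq z$. Combining the two halves of the range $\{t_0\leq t\leq z\}$ and $\{t>z\}$ as above completes the proof.

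The only genuinely delicate point is bookkeeping rather than mathematics: one must be careful that the second hypothesis is stated for real $x\geq z$ (not merely integers), which is exactly what is needed to apply the mean value theorem / sign-of-derivative argument over the continuous interval $[z,\infty)$; and one must use $t_0\leq z$ to ensure that the endpoint value $f(q_0,z)$ is covered by the finite list of checks in the third hypothesis. There is no real obstacle here — this is a routine calculus lemma — and the proof is essentially the three displayed observations strung together.
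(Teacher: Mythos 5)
Your proof is correct and takes essentially the same approach as the paper's: both establish monotonicity of $f(q_0,t)$ in $t$ on $[z,\infty)$ by differentiation using the second hypothesis, establish monotonicity of $f(q,t)$ in $q$ using the nonnegativity of $f_i(t)$ and $g_i(t)$, and then chain these with the finitely many checks in the third hypothesis. The only difference is presentational — you treat the two monotonicity claims in the opposite order and argue monotonicity in $q$ directly rather than via the derivative — which changes nothing of substance.
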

\begin{proof}
    The derivative of $f(q_0,x)$ with respect to $x$ is 
    \[\sum_{i=1}^k (f_i'(x)-f_i(x)g_i'(x)\log q_0)q_0^{-g_i(x)},\]
    which by assumption is non-positive for all $x\geq z$. Thus $f(q_0,x)\leq f(q_0,z)<M$ for all $x\geq z$, so in fact we have $f(q_0,t)<M$ for all integers $t\geq t_0$. For a fixed $t\geq t_0$, the function $q\mapsto f(q, t)$ is decreasing. Thus $f(q,t)\leq f(q_0,t)<M$ for all $q\geq q_0$.
\end{proof}

\subsection{Bounds on the number of rational points on a hypersurface}\label{sec:LW}
We will use an explicit version of the Lang-Weil bound \cite{LW54} due to Cafure and Matera \cite[Theorem 5.2]{CM06}. For convenience we state a more general bound that holds for all irreducible hypersurfaces regardless of whether or not they are geometrically irreducible. Define
$$
\Delta(q,d)\colonequals(d-1)(d-2)q^{-1/2}+5d^{13/3}q^{-1}.
$$
\begin{lemma} \label{lem:LW}
If $X \subseteq \bP^n$ is a degree $d$ irreducible hypersurface over $\F_q$, then 
$$
\# X(\F_{q}) \leq \frac{q^n-1+q^n\Delta(q,d)}{q-1}.
$$
\end{lemma}\begin{proof}
If $X$ is geometrically irreducible, the bound follows immediately by applying \cite[Theorem 5.2]{CM06} to the affine cone over $X$ in $\bA^{n+1}$ (that is, the affine variety obtained by taking the homogeneous polynomial defining $X$ and considering its vanishing locus in $\bA^{n+1}$). Otherwise, if $X$ is geometrically reducible, we apply \cite[Lemma~2.3]{CM06} to the affine cone over $X$, giving the bound
\[
    \# X(\F_q) \leq d^2\frac{q^{n-1}-1}{q-1}\leq \frac{q^nd^{13/3}q^{-1}}{q-1}\leq \frac{q^n\Delta(q,d)}{q-1}.\qedhere
\]
\end{proof}

We will also need both an upper and lower bound for geometrically irreducible varieties that are not necessarily hypersurfaces. The version below is an immediate consequence of \cite[Theorem 7.1]{CM06} but only holds for sufficiently large $q$. 
\begin{lemma}\label{lem:LW not hypersurface}
    Let $X\subseteq\bP^n$ be a geometrically irreducible variety over $\F_q$ of dimension $\ell\geq 1$ and degree $d$. If $q > 2(\ell + 1)d^2$, then
    \[\left|\#X(\F_q)-\frac{q^{\ell+1}-1}{q-1}\right|\leq\frac{q^{\ell+1}\Delta(q,d)}{q-1}.\]
\end{lemma}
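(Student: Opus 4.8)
The plan is to descend to the affine cone over $X$ and apply the explicit Lang--Weil estimate of Cafure and Matera \cite[Theorem 7.1]{CM06} there. Let $\widehat{X}\subseteq\bA^{n+1}$ denote the affine cone over $X$, that is, the zero locus of the homogeneous ideal defining $X$. Since $X$ is geometrically irreducible, so is $\widehat{X}$, and $\widehat{X}$ is an affine variety of dimension $\ell+1$ and degree $d$. Because the defining ideal is homogeneous, the origin lies on $\widehat{X}$, and every other $\F_q$-point of $\widehat{X}$ is a nonzero scalar multiple of a unique point of $X(\F_q)$, each such point of $X(\F_q)$ contributing exactly $q-1$ scalar multiples; hence
\[
\#\widehat{X}(\F_q)=1+(q-1)\,\#X(\F_q).
\]

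Next I would apply \cite[Theorem 7.1]{CM06} to the absolutely irreducible affine variety $\widehat{X}$ of dimension $\ell+1$ and degree $d$. For $q$ in the stated range $q>2(\ell+1)d^2$ (and $d\geq 2$; the case $d=1$ is trivial, since then $X$ is a linear $\bP^\ell$ and the left-hand side of the asserted inequality is $0$), this yields
\[
\bigl|\#\widehat{X}(\F_q)-q^{\ell+1}\bigr|\leq (d-1)(d-2)\,q^{\ell+1/2}+5d^{13/3}q^{\ell}=q^{\ell+1}\Delta(q,d),
\]
using the definition of $\Delta(q,d)$ from Section~\ref{sec:LW}.

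Finally, one combines the two displays: since $\#\widehat{X}(\F_q)-q^{\ell+1}=(q-1)\bigl(\#X(\F_q)-\tfrac{q^{\ell+1}-1}{q-1}\bigr)$, dividing the second display by $q-1$ gives exactly the claimed bound. The argument is essentially bookkeeping, and I do not expect a substantial obstacle; the only steps that require a little care are checking that passing to the affine cone preserves geometric irreducibility, raises the dimension by one, and preserves the degree, and that the error term produced by \cite[Theorem 7.1]{CM06} in dimension $\ell+1$ and degree $d$ is precisely $q^{\ell+1}\Delta(q,d)$ under the stated hypothesis on $q$. This is what is meant by the lemma being an immediate consequence of the cited result.
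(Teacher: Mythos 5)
The paper does not actually spell out a proof of this lemma: it simply asserts that it ``is an immediate consequence of [Theorem 7.1, CM06]'' and moves on. Your derivation via the affine cone is the natural way to cash out that assertion, and it parallels how the paper itself handles Proposition~\ref{prop:LW} by applying [Theorem 5.2, CM06] to the affine cone over a hypersurface. The bookkeeping is right: $\#\widehat{X}(\F_q)=1+(q-1)\#X(\F_q)$, so $\#\widehat{X}(\F_q)-q^{\ell+1}=(q-1)\bigl(\#X(\F_q)-\tfrac{q^{\ell+1}-1}{q-1}\bigr)$, and the Cafure--Matera error term for the cone (dimension $\ell+1$, degree $d$) is exactly $(d-1)(d-2)q^{\ell+1/2}+5d^{13/3}q^{\ell}=q^{\ell+1}\Delta(q,d)$, so dividing by $q-1$ gives the stated bound.

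The one place you pass over too quickly is the sentence ``For $q$ in the stated range $q>2(\ell+1)d^2\ldots$ this yields.'' You need to check that the hypothesis on $q$ required by [Theorem 7.1, CM06] \emph{for the affine cone} is implied by $q>2(\ell+1)d^2$. The cone $\widehat{X}$ has affine dimension $\ell+1$, so the Cafure--Matera condition is evaluated at dimension $\ell+1$, not $\ell$; depending on the precise form of that condition (it involves the dimension of the variety to which the theorem is applied), this could require $q>2(\ell+2)d^2$, which is strictly stronger than the lemma's hypothesis. Verify the exact wording of [Theorem 7.1, CM06] and confirm the constant; if the condition there is indeed expressed so that the cone computation returns $q>2(\ell+1)d^2$, your proof is complete, and otherwise you would need a slightly sharper route (or the lemma's hypothesis would need the extra $+1$). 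Aside from this one unchecked constant, the argument is correct and is what the paper intends.
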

For varieties that are not geometrically irreducible, we will use a considerably weaker upper bound on the number of points.
A variety is \emph{equidimensional} if every irreducible component has the same dimension. In this setting, we have a result due to Couvreur~\cite[Corollary 3.3]{C16}, which states the following in the case $\ell\geq n/2$.
\begin{lemma}\label{lem:reducible bound}
    If $X\subseteq\bP^n$ is an equidimensional projective variety over $\F_q$ of dimension $\ell\geq\frac{n}{2}$ and degree $d$, then
    \[\#X(\F_q)\leq \frac{q^{\ell+1}-1}{q-1}+(d-1)\left(\frac{q^{\ell+1}-1}{q-1}-\frac{q^{2\ell-n+1}-1}{q-1}\right).\]
\end{lemma}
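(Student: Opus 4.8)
This is \cite[Corollary 3.3]{C16}, so rather than reproduce Couvreur's argument in full I describe the strategy I would follow. Throughout write $p_i\colonequals\#\bP^i(\F_q)=\frac{q^{i+1}-1}{q-1}$ for $i\geq 0$ and $p_i\colonequals 0$ for $i<0$, so the asserted inequality reads $\#X(\F_q)\leq d\,p_\ell-(d-1)p_{2\ell-n}$ (and $2\ell-n\geq 0$ since $\ell\geq n/2$). The bound is best possible: if $X$ is a union of $d$ distinct $\ell$-planes in $\bP^n$ all containing a common $(2\ell-n)$-plane $\Lambda$ and otherwise in general position, every $t$-fold intersection with $t\geq 2$ equals $\Lambda$, and inclusion--exclusion with $\sum_{t=2}^d(-1)^t\binom{d}{t}=d-1$ gives $\#X(\F_q)=d\,p_\ell-(d-1)p_{2\ell-n}$. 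So the content is that no equidimensional $X$ of dimension $\ell\geq n/2$ and degree $d$ beats this configuration.

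I would argue by induction on the ambient dimension $n$, keeping the codimension $n-\ell$ fixed, with the claim formulated for all $\ell$ (for $\ell<n/2$ the formula collapses to the easy bound $\#X(\F_q)\leq d\,p_\ell$, obtained by projecting $X$ finitely onto $\bP^\ell$ from a generic complementary linear space). The base case of codimension one is Serre's bound $\#X(\F_q)\leq d\,q^{n-1}+p_{n-2}$ for hypersurfaces, which rearranges to exactly the claimed inequality when $\ell=n-1$. For the inductive step, first reduce to the case that $X$ is non-degenerate: if $X$ lies in a proper linear subspace $\bP^m$, apply the statement inside $\bP^m$ and note that $p_{2\ell-m}\geq p_{2\ell-n}$ makes the resulting bound at least as strong. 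When $X$ is non-degenerate no hyperplane contains it, so a double count of incident pairs $(P,H)$ with $P\in X(\F_q)$ and $H$ an $\F_q$-hyperplane through $P$ gives
\[
p_{n-1}\cdot\#X(\F_q)=\sum_{H}\#(X\cap H)(\F_q),
\]
the sum over all $p_n$ hyperplanes. Each $X\cap H$ is equidimensional of dimension $\ell-1$ and degree at most $d$ in $H\cong\bP^{n-1}$ (a proper hyperplane section of a pure-dimensional variety is pure of one less dimension), so one applies the inductive hypothesis to each section and simplifies the resulting expression in the $p_i$.

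The main obstacle is that this double count used naively overshoots: bounding every section by $d\,p_{\ell-1}-(d-1)p_{2\ell-n-1}$ and dividing by $p_{n-1}$ yields $\frac{p_n}{p_{n-1}}\bigl(d\,p_{\ell-1}-(d-1)p_{2\ell-n-1}\bigr)$, which exceeds $d\,p_\ell-(d-1)p_{2\ell-n}$ once $d$ is large---precisely because the bound on the sections that happen to be reducible or degenerate is too lossy, the same phenomenon that forces Serre to isolate the hyperplanes occurring as components of $X$ in the codimension-one case. The remedy, which I would take from \cite{C16}, is to run the induction simultaneously in $n$ and $d$: peel off any component of $X$ spanning a proper linear subspace one at a time, bound unions of linear spaces directly by inclusion--exclusion, apply the hyperplane-section step only to the remaining ``linear-component-free'' part, and track how the omitted intersection points compensate. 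Verifying that this bookkeeping closes to exactly $d\,p_\ell-(d-1)p_{2\ell-n}$, uniformly in $q$, is the delicate point, and is what Couvreur's argument accomplishes.
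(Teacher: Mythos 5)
The paper does not prove this lemma; it is stated as a direct citation of Couvreur's result \cite[Corollary 3.3]{C16}. Your approach is the same — cite Couvreur — and your accompanying sketch of the strategy (induction on ambient dimension with Serre's bound as the codimension-one base case, a hyperplane-section double count for the inductive step, and the careful treatment of linear components needed to close the induction) is an accurate description of the argument, as is your computation showing the bound is attained by $d$ distinct $\ell$-planes sharing a common $(2\ell-n)$-plane.
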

\noindent In the special case $\ell=n-1$ this reduces to
\[\#X(\F_q)\leq \frac{q^n-1}{q-1}+(d-1)q^{n-1},\]
a result originally due to Serre \cite{S91} and proven independently by S\o rensen \cite{S94}. The only other special case we will need is $\ell=n-3$, in which case we have
\[\#X(\F_q)\leq \frac{q^{n-2}-1}{q-1}+(d-1)(q^{n-3}+q^{n-4}+q^{n-5}).\]

\subsection{Bounds on the number of reducible hypersurfaces}\label{sec:bound-reducible}

In the inequalities that follow we will need bounds on the number of points on a hypersurface $H$. As we saw in Section~\ref{sec:LW} above, the available bounds are much stronger when $H$ is irreducible. To achieve the desired results, we will show that ``most'' $H$ are irreducible. 

Let $\mathcal{R}_{n,d}\subseteq \mathcal{S}_{n,d}\setminus\{0\}$ denote the set of polynomials that are reducible over $\F_q$, and set 
\[t\colonequals t(q,n,d)=\frac{\#\mathcal{R}_{n,d}}{\#(\mathcal{S}_{n,d}\setminus\{0\})}.\]
As in the proof of \cite[Proposition 2.7]{P04}, we observe that every element of $\mathcal{R}_{n,d}$ can be written as a product of a degree $i$ polynomial and a degree $d-i$ polynomial for some $1\leq i\leq \frac{d}{2}$, so that
\begin{equation}\label{eq:t:bound}
    t\leq\frac{\#(\mathcal{R}_{n,d}\cup\{0\})}{\#\mathcal{S}_{n,d}}\leq \frac{1}{\#\mathcal{S}_{n,d}}\sum_{i=1}^{\lfloor d/2\rfloor}\left(\#\mathcal{S}_{n,i}\right)\left(\#\mathcal{S}_{n,d-i}\right)=\sum_{i=1}^{\lfloor d/2\rfloor} q^{-N_i}
\end{equation}
for $N_i:=N_i(n,d)=\binom{n+d}{n}-\binom{n+i}{n}-\binom{n+d-i}{n}$. 
We use this to prove a bound similar to \cite[Proposition 3.2]{AGR24}; note that the value of $t$ there is larger than ours, as their count also includes hypersurfaces that are irreducible over $\F_q$ but not geometrically irreducible. 

\begin{lemma}\label{lem:bound t}
    Let $q\geq 2$ be a prime power. If $n\geq 3$, or if $n=2$ and $d\geq 7$, then $t(d-1)\leq \frac1{2q}$.
\end{lemma}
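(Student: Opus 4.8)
The plan is to bound $t(d-1)$ by bounding the right-hand side of inequality~\eqref{eq:t:bound}, namely $(d-1)\sum_{i=1}^{\lfloor d/2\rfloor}q^{-N_i}$, where $N_i=\binom{n+d}{n}-\binom{n+i}{n}-\binom{n+d-i}{n}$. The first step is to understand how $N_i$ behaves: since $\binom{n+j}{n}$ is convex in $j$, the sum $\binom{n+i}{n}+\binom{n+d-i}{n}$ is maximized over $1\leq i\leq\lfloor d/2\rfloor$ at the endpoint $i=1$, so $N_i\geq N_1=\binom{n+d}{n}-\binom{n+1}{n}-\binom{n+d-1}{n}$ for all $i$ in the range. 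Consequently $\sum_{i=1}^{\lfloor d/2\rfloor}q^{-N_i}\leq \frac{d}{2}q^{-N_1}$, and it suffices to prove $(d-1)\cdot\frac{d}{2}\cdot q^{-N_1}\leq\frac{1}{2q}$, i.e. $d(d-1)q^{1-N_1}\leq 1$. Since $q\geq 2$, it is enough to show $d(d-1)\leq 2^{N_1-1}$, i.e. that $N_1$ is large enough relative to $d$.

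The second step is to get a clean lower bound on $N_1$. We have
\[
N_1=\binom{n+d}{n}-\binom{n+d-1}{n}-(n+1)=\binom{n+d-1}{n-1}-(n+1),
\]
using the Pascal identity $\binom{n+d}{n}-\binom{n+d-1}{n}=\binom{n+d-1}{n-1}$. So I need $\binom{n+d-1}{n-1}-(n+1)\geq 1+\log_2(d(d-1))$, roughly $\binom{n+d-1}{n-1}\gtrsim n+2\log_2 d$. For $n\geq 3$ we have $\binom{n+d-1}{n-1}\geq\binom{d+2}{2}=\frac{(d+1)(d+2)}{2}$, which grows quadratically in $d$ and comfortably dominates $n+1+\log_2(d(d-1))$ once we also track growth in $n$ (note $\binom{n+d-1}{n-1}\geq\binom{n+1}{n-1}=\binom{n+1}{2}$ when $d\geq 2$, handling the $n$-dependence). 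For $n=2$, $N_1=\binom{d+1}{1}-3=d-2$, so I need $d(d-1)\leq 2^{d-3}$, which holds precisely for $d\geq 7$ — this is exactly the hypothesis's cutoff, and for $d\in\{2,\ldots,6\}$ with $n=2$ the claimed bound genuinely can fail, explaining why those cases are excluded.

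The execution is therefore: (i) invoke \eqref{eq:t:bound} and the convexity argument to reduce to $d(d-1)q^{1-N_1}\leq 1$; (ii) simplify $N_1=\binom{n+d-1}{n-1}-(n+1)$ via Pascal; (iii) split into the cases $n=2$ (where $N_1=d-2$ and one checks $d(d-1)\leq 2^{d-3}$ for $d\geq 7$ by a short induction or direct verification) and $n\geq 3$ (where $\binom{n+d-1}{n-1}$ is large enough — e.g. bound it below by $\binom{d+2}{2}$ and separately by $\binom{n+1}{2}$, or just by $\frac{(n+d-1)(n+d-2)}{2}$ since $n-1\geq 2$, then check $\frac{(n+d-1)(n+d-2)}{2}-(n+1)\geq\log_2(d(d-1))+1$ directly). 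Monotonicity in $q$ is automatic since $q^{1-N_1}$ is decreasing in $q$ once $N_1\geq 1$, so it suffices to verify everything at $q=2$; alternatively this is exactly the kind of estimate Lemma~\ref{lem: poly exp} is designed to certify. The main obstacle is purely bookkeeping: making the case $n\geq 3$ estimate uniform in both $n$ and $d$ simultaneously, which the double lower bound on $\binom{n+d-1}{n-1}$ (one capturing $d$-growth, one capturing $n$-growth) resolves cleanly.
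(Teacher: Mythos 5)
Your reduction to $d(d-1)\,q^{1-N_1}\leq 1$ via the convexity bound $N_i\geq N_1$ is correct, as is the Pascal identity $N_1=\binom{n+d-1}{n-1}-(n+1)$, and the $n\geq 3$ branch goes through (it is in fact the same estimate as the paper's, after a change of index). However, there is a genuine arithmetic error in the $n=2$ branch: you claim that $d(d-1)\leq 2^{d-3}$ holds for $d\geq 7$, "exactly the hypothesis's cutoff." It does not. For $d=7$ one has $42>16$; for $d=8$, $56>32$; for $d=9$, $72>64$. The inequality only kicks in at $d=10$.

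The underlying reason is that your reduction from $\sum_{i=1}^{\lfloor d/2\rfloor}q^{-N_i}$ to $\tfrac{d}{2}q^{-N_1}$ is too lossy for $n=2$ and small $d$. For $n=2$ the exponents are $N_i=i(d-i)-1$, so $N_1=d-2$ is by far the smallest and the remaining $N_i$ grow quadratically; keeping the full sum recovers enough slack. For example, at $(n,d)=(2,7)$ and $q=2$ the exact bound $6q(q^{-5}+q^{-9}+q^{-11})\approx 0.40<\tfrac12$ works, whereas your coarsened version $6q\cdot 3\cdot q^{-5}=18\cdot 2^{-4}=1.125$ does not. The paper's proof treats $(n,d)\in\{(2,7),(2,8),(2,9)\}$ separately in this way, and only applies the crude $N_i\geq d-2$ bound for $d\geq 10$. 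Your proof is missing this case split, so it does not cover $n=2$, $d\in\{7,8,9\}$.
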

    
\begin{proof}
If $(n,d)=(2,7),(2,8),(2,9)$, we have
\begin{align*}
    (d-1)qt&\leq 6q(q^{-5}+q^{-9}+q^{-11}),\\
    (d-1)qt&\leq 7q(q^{-6}+q^{-11}+q^{-14}+q^{-15}),\\
    (d-1)qt&\leq 8q(q^{-7}+q^{-13}+q^{-17}+q^{-19}),
\end{align*}
respectively. These upper bounds are less than $\frac12$ for all $q\geq 2$. It suffices to prove the result assuming $n=2$ and $d\geq 10$, or assuming $n\geq 3$.

For fixed $d$ and $1\leq i\leq \lfloor d/2\rfloor$, we claim that $N_i(n,d)$ is an increasing function of $n$. Indeed, since the function $\binom{x}{n+1}$ is convex for $x\geq n$, we have $$\binom{n+d}{n+1}-\binom{n+d-i}{n+1}\geq \binom{n+i}{n+1}-\binom{n}{n+1}=\binom{n+i}{n+1}.$$ It follows that
\begin{align*}
   &\left(\binom{n+d+1}{n+1}-\binom{n+i+1}{n+1}-\binom{n+d-i+1}{n+1}\right)-\left(\binom{n+d}{n}-\binom{n+i}{n}-\binom{n+d-i}{n}\right)\\
    &= \binom{n+d}{n+1}-\binom{n+i}{n+1}-\binom{n+d-i}{n+1}\geq 0.
\end{align*}
Thus for $n\geq 3$, we have 
\begin{align*}
    N_i &\geq \binom{d+3}{3}-\binom{i+3}{3}-\binom{d-i+3}{3}\\
     &=\frac12 (d + 4) i (d - i)-1 \geq \frac12(d+4)(d-1)-1
\end{align*}
for each $1\leq i\leq d/2$. Using inequality~\eqref{eq:t:bound}, we deduce that
\begin{align*}
        (d-1)qt\leq (d-1)q\left(\sum_{i=1}^{\lfloor d/2\rfloor}q^{-N_i}\right)
        \leq \frac12d(d-1)q^{2-(d+4)(d-1)/2},
    \end{align*}
    which by Lemma \ref{lem: poly exp} is less than or equal to $\frac12$ for all $q\geq 2$ and $d\geq 2$.

    If $n=2$, we instead have
    \begin{align*}
        N_i &= \binom{d+2}{2}-\binom{i+2}{2}-\binom{d-i+2}{2}=i(d-i)-1 \geq d-2
    \end{align*}
    for each $1\leq i\leq d/2$. Inequality~\eqref{eq:t:bound} implies that
    \begin{align*}
        (d-1)qt&\leq \frac12d(d-1)q^{3-d},
    \end{align*}
    which by Lemma \ref{lem: poly exp} is less than $\frac12$ for all $q\geq 2$ and $d\geq 10$.
\end{proof}

We also note the following weaker bound that holds more generally.

\begin{lemma}\label{lem:bound t 2}
    Let $q\geq 2$ be a prime power, $n\geq 2$, and $d\geq 1$. Then $t(d-1)\leq \frac 98$.
\end{lemma}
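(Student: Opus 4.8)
The plan is to use the same inequality~\eqref{eq:t:bound} that drove the proof of Lemma~\ref{lem:bound t}, namely
\[
t(d-1)\leq (d-1)\sum_{i=1}^{\lfloor d/2\rfloor}q^{-N_i},
\qquad
N_i=\binom{n+d}{n}-\binom{n+i}{n}-\binom{n+d-i}{n},
\]
and to bound the right-hand side by $\tfrac98$ for all $q\geq 2$, $n\geq 2$, $d\geq 1$. The cases $d=1,2,3$ must be handled by hand: for $d=1$ there are no reducible forms, so $t=0$; for $d=2$ and $d=3$ one has $\lfloor d/2\rfloor=1$ and $N_1=n$ (when $d=2$) or $N_1=\binom{n+3}{n}-(n+1)-\binom{n+2}{n}$ (when $d=3$), so $t(d-1)\leq (d-1)q^{-N_1}$ is easily seen to be at most $\tfrac98$ for $n\geq 2$, $q\geq 2$ (indeed much smaller). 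So assume $d\geq 4$.

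For $d\geq 4$ and $n\geq 2$ I would lower-bound each $N_i$ by its value at $n=2$, exactly as in the last paragraph of the proof of Lemma~\ref{lem:bound t}: since $N_i(n,d)$ is increasing in $n$ (established there), we have $N_i\geq N_i(2,d)=i(d-i)-1\geq (d-1)-1=d-2$ for all $1\leq i\leq d/2$. Hence
\[
t(d-1)\leq (d-1)\sum_{i=1}^{\lfloor d/2\rfloor}q^{-(d-2)}\leq \tfrac12 d(d-1)q^{-(d-2)}\leq \tfrac12 d(d-1)2^{-(d-2)}.
\]
The function $\tfrac12 d(d-1)2^{-(d-2)}$ is at most $\tfrac98$ for all $d\geq 4$: it equals $3$ at $d=4$... so this crude bound does \emph{not} immediately give $\tfrac98$, and that is the one real obstacle. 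The fix is to not throw away the dependence on $n$ in the small-$d$ range. For $d\in\{4,5,6\}$ and each relevant $i$ one should instead use $N_i(n,d)\geq N_i(2,d)=i(d-i)-1$ together with the fact that there are only $\lfloor d/2\rfloor\leq 3$ terms, and check directly that
\[
(d-1)\sum_{i=1}^{\lfloor d/2\rfloor}q^{-(i(d-i)-1)}\leq \tfrac98
\qquad\text{for all }q\geq 2.
\]
Concretely: $d=4$ gives $3(q^{-3}+q^{-1})$, which at $q=2$ is $3(\tfrac18+\tfrac12)=\tfrac{15}{8}$ — still too big for $n=2$! But recall that in Lemma~\ref{lem:bound t} the troublesome small cases at $n=2$ ($d\leq 6$, and $d=7,8,9$) were exactly the ones requiring separate treatment, and for $\tfrac98$ the same phenomenon occurs. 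So for $n=2$ and $d\in\{2,3,4,5,6\}$ one must use the \emph{sharper} count: every reducible $F\in\mathcal S_{2,d}$ is a product of forms of degrees $i$ and $d-i$, and the number of such products is at most $\#\mathcal S_{2,i}\cdot\#\mathcal S_{2,d-i}$ but one can subtract off overcounting, or more simply observe that a plane curve that is reducible over $\F_q$ has at least two $\F_q$-rational components, forcing $\#F^{-1}(0)$-type constraints; in practice it is cleanest to just verify $t(2,d)(d-1)\leq\tfrac98$ for $d\in\{2,\dots,6\}$ and $q=2,3$ by the explicit formula $t(2,d)=\frac{\#\mathcal R_{2,d}}{q^{\binom{d+2}{2}}-1}$ using the known counts of reducible/irreducible plane curves, and note monotonicity in $q$. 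For $n\geq 3$, the bound $N_i\geq \tfrac12(d+4)(d-1)-1$ from Lemma~\ref{lem:bound t}'s proof already gives $t(d-1)\leq\tfrac12 d(d-1)q^{2-(d+4)(d-1)/2}\leq\tfrac12<\tfrac98$ via Lemma~\ref{lem: poly exp}, and for $n=2$, $d\geq 7$ the proof of Lemma~\ref{lem:bound t} already gives $t(d-1)\leq\tfrac1{2q}\leq\tfrac98$.

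In summary, the structure is: (1) dispose of $n\geq 3$ and of $n=2,\,d\geq 7$ by quoting the estimates already obtained inside the proof of Lemma~\ref{lem:bound t} (which yield the much stronger bound $\tfrac12$ or $\tfrac1{2q}$); (2) for the finitely many remaining pairs $n=2$, $2\leq d\leq 6$, use inequality~\eqref{eq:t:bound} with $N_i=i(d-i)-1$ and apply Lemma~\ref{lem: poly exp} in $q$ after checking the base case $q=2$ (with the $d=4$ case, $3(q^{-1}+q^{-3})$, requiring the observation that at $q=2$ it equals $\tfrac{15}{8}>\tfrac98$, so here one genuinely needs a slightly better reducible-count for plane quartics over $\F_2$ — this is the main obstacle, and it is resolved either by the exact enumeration of reducible plane quartics or by noting that the $i=1$ factor and $i=3$ factor cannot be chosen fully independently up to scaling, shaving the estimate below $\tfrac98$); and similarly $d=5,6$ at $q=2$. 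The only genuinely delicate point is thus the handful of small $(2,d)$ cases at $q=2$, handled by a finite computation. $\qedhere$
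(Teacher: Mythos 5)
Your approach is essentially the same as the paper's: dispose of $n\geq 3$ and of $n=2,\ d\geq 7$ by quoting the estimates obtained in the proof of Lemma~\ref{lem:bound t}, handle $d=1$ ($t=0$) and $d=2$ ($t\leq 1$) trivially, and check the remaining cases $n=2$, $3\leq d\leq 6$ via inequality~\eqref{eq:t:bound} with $N_i(2,d)=i(d-i)-1$. However, the ``main obstacle'' you flag at $d=4$, $q=2$ is the result of an arithmetic slip, not a real gap. For $n=2$, $d=4$ you have $N_1(2,4)=1\cdot 3-1=2$ and $N_2(2,4)=2\cdot 2-1=3$, so inequality~\eqref{eq:t:bound} gives $t(d-1)\leq 3(q^{-2}+q^{-3})$, not $3(q^{-1}+q^{-3})$. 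At $q=2$ this evaluates to $3\left(\tfrac14+\tfrac18\right)=\tfrac98$, which meets the stated bound exactly (the lemma asserts $\leq\tfrac98$), so no refined enumeration of reducible plane quartics is needed. Once that exponent is corrected, the checks at $d=3,5,6$ give values $\leq 1$ at $q=2$, monotonicity in $q$ covers $q\geq 3$, and the proof closes just as in the paper.
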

\begin{proof}
    If $n\geq 3$, or if $n=2$ and $d\geq 7$, this is immediate from Lemma~\ref{lem:bound t}. For $d=1$ the bound is trivial, and for $d=2$ it follows because $t\leq 1$. It suffices to verify the claimed inequality for $n=2$ and $3\leq d\leq 6$. Considering each value of $d$ in turn, by inequality~\eqref{eq:t:bound} we obtain 
    \begin{align*}
        d=3:t(d-1)&\leq 2q^{-1},\\
        d=4:t(d-1)&\leq 3q^{-2}+3q^{-3},\\
        d=5:t(d-1)&\leq 4q^{-3}+4q^{-5},\\
        d=6:t(d-1)&\leq 5q^{-4}+5q^{-7}+5q^{-8}.
    \end{align*}
    For $q\geq 2$ we have $3q^{-2}+3q^{-3}\leq \frac 98$, and the remaining values are at most $1$.
\end{proof}

\section{First method: incidence correspondence}\label{sec:incidence}

Recall from Section \ref{sec:bound-reducible} that $t \colonequals t(q,n,d)$ denotes the proportion of degree $d$ hypersurfaces in $\bP^n$ that are reducible over $\F_q$. We use this quantity to compute a lower bound on the proportion of points that satisfy Theorem \ref{thm:main}.

\begin{proposition}\label{prop:less than q}
    Let $n\geq 2$ and $d\geq 2$. If $1\leq r\leq m$, then
    \begin{align*}
    \mu(q,n,d,r)\geq 1-\frac{q^{r-m}+t(d-1)+\Delta(q^r,d)}{q-1}.
    \end{align*}
    If $r>m$, then 
     \begin{align*}
        \mu(q,n,d,r)
        \geq 1-\frac{q^{m-r}\left(1+t(d-1)+\Delta(q^r,d)\right)}{q-1}.
    \end{align*}
\end{proposition}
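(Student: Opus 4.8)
The plan is to set up an incidence count between points of $\bP^n(\F_{q^r})$ and degree $d$ hypersurfaces over $\F_q$, and extract the bound on $\mu$ by comparing a lower bound for the number of "bad" incidences against an upper bound obtained by summing point counts over all hypersurfaces. Call a point $P\in\bP^n(\F_{q^r})$ \emph{bad} if $\dim_{\F_q}\{F\in\mathcal{S}_{n,d}\mid F(P)=0\}>\max(m-r,0)$, i.e. if equation~\eqref{eq: expected dim} fails. As noted in the introduction, the space of $F\in\mathcal{S}_{n,d}$ vanishing at $P$ always has codimension at most $r$, and it equals the subspace of $\mathcal{S}_{n,d}(\F_{q^r})$ cut out by the $r$ Galois conjugates of $P$, descended to $\F_q$. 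So $P$ is bad exactly when these $r$ linear conditions fail to be independent; equivalently, the number of $F\in\mathcal{S}_{n,d}$ vanishing at $P$ is at least $q\cdot q^{\max(m-r,0)}$ rather than exactly $q^{\max(m-r,0)}$ (projectively: each bad point lies on at least $\frac{q^{\max(m-r,0)+1}-1}{q-1}$ members of $\bP(\mathcal{S}_{n,d})$, versus $\frac{q^{\max(m-r,0)}-1}{q-1}$ for a good point — with the convention that a good point with $r\geq m$ lies on no hypersurface). Let $B$ be the number of bad points.

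Next I would count the set $\mathcal{I}=\{(P,H): P\in H(\F_{q^r}),\ H\in\bP(\mathcal{S}_{n,d})\}$ in two ways. Summing over points, $\#\mathcal{I}\geq B\cdot\frac{q^{\max(m-r,0)+1}-1}{q-1}$ if $r<m$ (for $r\geq m$ one instead notes each bad point lies on at least $\frac{q-1}{q-1}=1$ hypersurface, giving $\#\mathcal I\ge B$, and I'd track the two cases separately as in the statement). Summing over hypersurfaces, $\#\mathcal{I}=\sum_{H}\#H(\F_{q^r})$; split this sum according to whether $H$ is irreducible over $\F_q$ or not. For the $\frac{q^m-1}{q-1}\cdot(1-t)$ irreducible hypersurfaces, apply Proposition~\ref{prop:LW} over the field $\F_{q^r}$ to bound $\#H(\F_{q^r})\le\frac{q^{rn}-1+q^{rn}\Delta(q^r,d)}{q^r-1}$; for the $\frac{q^m-1}{q-1}\cdot t$ reducible ones, use the Serre–Sørensen bound (the $\ell=n-1$ case of Lemma~\ref{lem:reducible bound}) $\#H(\F_{q^r})\le\frac{q^{rn}-1}{q^r-1}+(d-1)q^{r(n-1)}$. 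Combining, $\#\mathcal I\le \frac{q^m-1}{q-1}\left(\frac{q^{rn}-1}{q^r-1}+\frac{q^{rn}\Delta(q^r,d)}{q^r-1}+t(d-1)q^{r(n-1)}\right)$, where I've bounded $(1-t)\le 1$ and combined the two error terms.

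Then I would combine the two estimates: $B\le \#\mathcal I/(\text{per-bad-point count})$. Dividing by $\#\bP^n(\F_{q^r})=\frac{q^{r(n+1)}-1}{q^r-1}$ and simplifying the ratio of the $q$-binomial-type factors (using $\frac{q^m-1}{q-1}\cdot\frac{q^{rn}-1}{q^r-1}\big/\frac{q^{r(n+1)}-1}{q^r-1}$ and the per-point factor $\frac{q^{\max(m-r,0)+1}-1}{q-1}$), one should land on $\mu=1-B/\#\bP^n(\F_{q^r})\ge 1-\frac{q^{r-m}+t(d-1)+\Delta(q^r,d)}{q-1}$ in the case $r\le m$, and the analogous bound with $q^{r-m}$ replaced by $q^{m-r}(1+t(d-1)+\Delta(q^r,d))$ in the case $r>m$ — the difference between the two cases comes precisely from whether the per-bad-point hypersurface count is a large power of $q$ (when $r<m$, so a bad point lies on many hypersurfaces and the bound on $B$ is strong) or just $O(1)$ (when $r\ge m$, so each hypersurface contributes $\approx q^{r(n-1)}$ points but we only get $B\lesssim$ that count, forcing the weaker estimate). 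The main obstacle I anticipate is purely bookkeeping: carefully matching the powers of $q$ in the telescoping of the projective-space cardinalities so that the stray $-1$'s and the $(q-1)$ versus $(q^r-1)$ denominators collapse to exactly the clean expression in the statement, and correctly handling the degenerate subcase $r=m$ where $\max(m-r,0)=0$ and a "good" point lies on no hypersurface at all. There is no deep idea beyond the double count; the delicacy is entirely in the estimates lining up cleanly in both regimes.
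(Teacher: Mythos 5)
Your proposal follows the same two-way incidence count as the paper, and your upper bound on $\#\mathcal{I}$ --- splitting into irreducible vs.\ reducible hypersurfaces and applying Proposition~\ref{prop:LW} (over $\F_{q^r}$) and the $\ell=n-1$ case of Lemma~\ref{lem:reducible bound} --- matches the paper's estimate. The gap is in the lower bound on $\#\mathcal{I}$ for $1\leq r<m$: you sum only over bad points, taking $\#\mathcal{I}\geq B\cdot\frac{q^{m-r+1}-1}{q-1}$, and discard the contribution from the good points. When $r<m$, each good point still lies on exactly $\frac{q^{m-r}-1}{q-1}$ degree-$d$ hypersurfaces, and keeping this contribution gives
\[
\#\mathcal{I}\;\geq\;\#\bP^n(\F_{q^r})\cdot\left[(1-\mu)\frac{q^{m-r+1}-1}{q-1}+\mu\cdot\frac{q^{m-r}-1}{q-1}\right],
\]
which is what the paper's proof uses (the paper phrases it with nonzero polynomials rather than projective classes, hence the missing $q-1$ denominator, but the two conventions are equivalent). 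The omitted term is not a refinement you can afford to drop: the bracketed quantity equals $\frac{q^{m-r+1}-1-\mu\,q^{m-r}(q-1)}{q-1}$, so after rearranging one picks up a constant $\frac{q^{m-r+1}-1}{q^{m-r}(q-1)}=1+\frac{1-q^{r-m}}{q-1}$, whose extra piece $\frac{1-q^{r-m}}{q-1}$ is exactly what converts the leading coefficient from $1$ to $q^{r-m}$ in the final bound. Carrying your version to the end yields only $1-\mu \leq \frac{1+t(d-1)+\Delta(q^r,d)}{(q-1)(1-q^{-r(n+1)})}$; for $r<m$ this is strictly weaker than the claimed $1-\mu\leq\frac{q^{r-m}+t(d-1)+\Delta(q^r,d)}{q-1}$, and for $q=2$ it is vacuous (the right-hand side is at least $1$). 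Your handling of $r\geq m$ is unaffected, since there the good-point contribution is genuinely zero.
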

\begin{proof}
    Consider the incidence correspondence
    $$
    \mathcal{I} \colonequals \{(P, F) \ | \ F(P)=0 \} \subseteq \mathbb{P}^n(\mathbb{F}_{q^r}) \times (\mathcal{S}_{n,d}\setminus\{0\}).
    $$    
    We count the size of $\mathcal{I}$ in two ways. First, we fix each nonzero $F\in\mathcal{S}_{n,d}$ and count the number of points $P\in \mathbb{P}^n(\mathbb{F}_{q^r})$ with $F(P)=0$. Using Lemma~\ref{lem:LW} for the irreducible hypersurfaces and Lemma~\ref{lem:reducible bound} for the rest, we have 
    \begin{align}
       \nonumber\# \mathcal{I} 
       &\leq  (q^m-1) \left( \frac{(1-t)}{q^r-1}\big(q^{rn}-1+q^{rn}\Delta(q^r,d)\big)+ t \left((d-1) q^{r(n-1)} + \frac{q^{rn}-1}{q^r-1}\right)\right)\\
       &=(q^m-1)\left(\frac{q^{rn}-1}{q^r-1}+\frac{(1-t)q^{rn}\Delta(q^r,d)}{q^r-1}+t(d-1)q^{r(n-1)}\right)\nonumber\\
        &\leq \left( \frac{q^{m}-1}{q^r-1} \right) \left((q^{rn}-1)+(1-t)\Delta(q^r,d)q^{rn}+t(d-1)q^{rn}\right).\label{ineq:upper-bound}
    \end{align}    
   
    Next, we fix a point $P\in \mathbb{P}^n(\mathbb{F}_{q^r})$ and count the number of polynomials vanishing at $P$. Let $\mu\colonequals\mu(q,n,d,r)$. We first consider the case $r \leq m$.   Of the points in $\bP^n(\F_{q^r})$, $1-\mu$ of them are in the common vanishing locus of a subspace of $\mathcal{S}_{n,d}$ with dimension at least $m-r+1$, while the remaining $\mu$ are in the vanishing locus of a subspace of dimension $m-r$. Therefore,
    \begin{align}
    \nonumber\# \mathcal{I} &\geq  \left(\frac{q^{r(n+1)}-1}{q^r-1}\right)\left((1-\mu)(q^{m-r+1}-1)+\mu(q^{m-r}-1)\right)\\
    &=\left(\frac{q^{r(n+1)}-1}{q^r-1}\right)
    \left(q^{m-r+1}-1-\mu q^{m-r}(q-1)\right). \label{ineq:lower-bound}
    \end{align}  
    Combining inequalities~\eqref{ineq:upper-bound} and~\eqref{ineq:lower-bound}, and multiplying by $q^r-1$, we obtain
    \begin{align*}
        &(q^{r (n + 1)} - 1) \left(q^{m-r+1}-1 -\mu q^{m-r}(q-1)\right)\\
        &\qquad \leq (q^m-1)(q^{rn}-1)+(q^m-1)q^{rn}\left((1-t)\Delta(q^r,d)+t(d-1)\right),
    \end{align*}
    and after dividing through by $q^{m+rn}$ and rearranging terms, we can conclude that
   \begin{align*}
        (q-1)(1-q^{-r(n+1)})\mu& \geq q^{-m-rn}\left((q^{r(n+1)}-1)(q^{m-r+1}-1)-(q^m-1)(q^{rn}-1)\right.\\
        &\qquad\qquad \quad\left.-(q^m-1)q^{rn}\left((1-t)\Delta(q^r,d)+t(d-1)\right)\right)\\
        &= \left(q-1-q^{r-m}(1-q^{-r})+q^{-rn}(1-q^{1-r})\right)\\
        &\qquad\qquad\quad -(1-q^{-m})\left((1-t)\Delta(q^r,d)+t(d-1)\right)\\
        &\geq q-1-q^{r-m}-(1-t)\Delta(q^r,d)-t(d-1)\\
        &\geq (q-1)-(q^{r-m}+t(d-1)+\Delta(q^r,d)).
    \end{align*}

    Now divide both sides by $q-1$. Since $\mu\geq 0$ and $1-q^{-r(n+1)}\leq 1$, we have
    \[\mu\geq (1-q^{-r(n+1)})\mu\geq 1-\frac{q^{r-m}+t(d-1)+\Delta(q^r,d)}{q-1}\]
    as desired.


    If $r>m$, then we repeat the same argument but replacing inequality~\eqref{ineq:lower-bound} with
    \begin{align*}
    \# \mathcal{I} &\geq  \left(\frac{q^{r(n+1)}-1}{q^r-1}\right)(1-\mu)(q-1),
    \end{align*}
    since a fraction $\mu$ of the points lie on no hypersurface, and the remaining fraction $1-\mu$ lie on at least one. We obtain the desired lower bound on $\mu$ by carrying out a similar algebraic manipulation.
\end{proof}

\begin{remark}\label{rmk: simpler cases}
    If $d=1$, we repeat the proof of Proposition~\ref{prop:less than q} but replace the bound \eqref{ineq:upper-bound} with the exact count of the number of points on a hyperplane:
    \[\#\mathcal{I}=(q^m-1)\left(\frac{q^{rn}-1}{q^r-1}\right).\]
    Following the rest of the argument we obtain the bounds
    \begin{align*}
        \mu(q,n,1,r)&\geq 1-\frac{q^{r-m}-q^{-m}}{q-1}\qquad \text{if }r\leq m,\\
        \mu(q,n,1,r)&\geq 1-\frac{q^{m-r}-q^{-r}}{q-1}\qquad \text{if }r> m.
    \end{align*}
    Since $q^{-|m-r|}\leq 1$ and $q\geq 2$, the lower bound is always positive.
\end{remark}

\subsection{Checking the inequality}\label{subsec: checking inequality}
Recall that the case $d=1$ was handled in Remark~\ref{rmk: simpler cases}. In the following, we assume $d\geq 2$.

To prove $\mu(q,n,d,r)>0$ for $r\leq m$, Proposition~\ref{prop:less than q} reduces the problem to verifying that
\begin{equation}\label{eq:toprove}
    q^{r-m} + t(d-1)+\Delta(q^r,d)
\end{equation}
is less than $q-1$. If instead $r>m$, Proposition~\ref{prop:less than q} reduces the problem to verifying that
\begin{equation}\label{eq:toprove 2}
    q^{m-r}\left(1 + t(d-1)+\Delta(q^r,d)\right)
\end{equation}
is less than $q-1$. Below we show that these bounds hold (and therefore Theorem \ref{thm:main} holds) except possibly in the following cases:
\begin{enumerate}[(i)]
    \item $q\leq 3$, $n=2$, $d\leq 6$, and $r\leq m+1$;
    \item $q=3$ and $r\leq 10$;
    \item $q=2$ and $r\leq 24$;
    \item $q=2$ and $r=m$.
\end{enumerate}
 So given our constraints $n\geq 2$ and $r>\binom{n-1+d}{n-1}$, there are only finitely many quadruples $(q,n,d,r)$ satisfying each of (i)--(iii), and we check these by explicit computation in Appendix~\ref{sec:computer}. Case (iv) is more difficult and will be considered in Sections \ref{sec:avoid} and \ref{sec:inc-exc}.

\medskip

\noindent Since we are assuming $r\geq d+2$, both quantities in \eqref{eq:toprove} and~\eqref{eq:toprove 2} are bounded above by
\begin{align}\label{eq:new toprove}
    1+ t(d-1) + (r-3)(r-4) q^{-r/2} + 5(r-2)^{13/3} q^{-r},
\end{align}
and it suffices to prove that this is less than $q-1$.
Observe that if we have an upper bound for $t(d-1)$ of the form $c$ or $c/q$ for some constant $c>0$, then the quantity~\eqref{eq:new toprove} is bounded by a function $f(q,r)$ of the form \eqref{eq:poly exp form}, so that we can apply Lemma~\ref{lem: poly exp} to obtain the desired bound.

First suppose that $n=2$ and $d\leq 6$. We have $t(d-1)\leq \frac98$ by Lemma \ref{lem:bound t 2}. Applying this bound to \eqref{eq:new toprove}, we obtain an expression that is strictly less than $3$ (and therefore also less than $q-1$) for $q\geq 4$ and all $r$. If instead $q\leq 3$ and $r\geq m+2$, \eqref{eq:toprove 2} is bounded above by 
\[\frac14\left(1+ \frac98 + (r-3)(r-4) q^{-r/2} + 5(r-2)^{13/3} q^{-r}\right).\]
This is less than $1$ for $q=2$ and $r\geq 20$, and is less than $2$ for $q\geq 3$ and all $r$. The cases with $q=2$ and $r\leq 19$ are accounted for in case (iii). Finally, the cases with $q\leq 3$ and $r\leq m+1$ are accounted for in case (i).

In all remaining cases, we have the bound $t(d-1)\leq \frac{1}{2q}$ by Lemma \ref{lem:bound t}. Plugging this bound into \eqref{eq:new toprove}, the resulting expression is less than $3$ for $q\geq 4$ and all $r$, and is less than $2$ for $q=3$ and all $r\geq 11$. If $q=3$ and $r\leq 10$, we have the exceptional case (ii).

Finally, suppose $q=2$. Assume that $r\neq m$, so that $q^{-|m-r|}\leq \frac12$. Then since $r\geq d+2$, \eqref{eq:toprove} and \eqref{eq:toprove 2} are both bounded above by
\[\frac12 + \frac14 + (r-3)(r-4) 2^{-r/2} + 5(r-2)^{13/3} 2^{-r},\]
which is less than $1$ provided that $r\geq 25$.
If $r\leq 24$ or $r=m$, we have the exceptional cases (iii) and (iv), respectively.

\section{Second method: counting by irreducible component}\label{sec:avoid}

In this section, we work on the case $r=m=\binom{n+d}{d}$ more carefully using a different approach. In particular, we present a simple proof for the case $q \geq 3$, which results in a new proof of Theorem~\ref{thm:AGR} for finite fields.

Throughout, assume that $q,n,d$ are fixed and $n \geq 2$. Consider the following collection of hypersurfaces $\mathcal{H}$:
$$
\mathcal{H}=\{H: \text{$H$ is an irreducible hypersurface over $\F_q$ with degree $e\leq d$}\}.
$$

When $r=m$, the quantity $\mu(q,n,d,r)$ measures the proportion of points in $\bP^n(\F_{q^r})$ that do not lie on any degree $d$ hypersurface over $\F_q$. The key inequality in this section is the following. 

\begin{lemma}\label{lem:obs}
$$
\mu(q,n,d,r)\geq 1-\frac{\sum_{H \in \mathcal{H}} \#H(\F_{q^m})}{\# \bP^n(\F_{q^m})}.
$$
\end{lemma}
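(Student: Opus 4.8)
The plan is to bound $\mu(q,n,d,r)$ from below by estimating, from above, the number of $\F_{q^m}$-points that fail the expected-dimension condition \eqref{eq: expected dim}. Since $r = m$, a point $P \in \bP^n(\F_{q^m})$ satisfies \eqref{eq: expected dim} precisely when $\max(m - r, 0) = 0$ is the dimension of the space of degree $d$ forms over $\F_q$ vanishing at $P$, i.e.\ when \emph{no} nonzero $F \in \mathcal{S}_{n,d}$ vanishes at $P$. Equivalently, $P$ fails the condition exactly when it lies on some degree $d$ hypersurface over $\F_q$. So
\[
\mu(q,n,d,m) = 1 - \frac{\#\{P \in \bP^n(\F_{q^m}) : P \text{ lies on some degree } d \text{ hypersurface over } \F_q\}}{\#\bP^n(\F_{q^m})}.
\]
The numerator is the size of the union $\bigcup_{H} H(\F_{q^m})$ over all degree $d$ hypersurfaces $H$ over $\F_q$.

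The key observation is that any degree $d$ hypersurface $H$ over $\F_q$ factors (over $\F_q$) into irreducible components, each an irreducible hypersurface over $\F_q$ of some degree $e \le d$; hence every $\F_{q^m}$-point of $H$ lies on one of these irreducible components, which is a member of $\mathcal{H}$. Therefore
\[
\bigcup_{H : \deg H = d} H(\F_{q^m}) \subseteq \bigcup_{H' \in \mathcal{H}} H'(\F_{q^m}),
\]
where the union on the right ranges over \emph{all} irreducible hypersurfaces over $\F_q$ of degree at most $d$ (note that every such $H'$ does arise as a component of at least one degree $d$ hypersurface, e.g.\ multiplied by a suitable power of a linear form, but we do not even need this — enlarging the union only helps). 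Taking cardinalities and using the crude union bound $\#\bigcup_{H' \in \mathcal{H}} H'(\F_{q^m}) \le \sum_{H' \in \mathcal{H}} \#H'(\F_{q^m})$, we get
\[
\#\{P \in \bP^n(\F_{q^m}) : P \text{ lies on some degree } d \text{ hypersurface over } \F_q\} \le \sum_{H \in \mathcal{H}} \#H(\F_{q^m}).
\]
Dividing by $\#\bP^n(\F_{q^m})$ and substituting into the displayed formula for $\mu$ yields the claimed inequality.

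There is essentially no obstacle here: the only points to be careful about are (a) correctly unwinding the definition of $\mu(q,n,d,r)$ in the case $r = m$ so that "fails \eqref{eq: expected dim}" becomes "lies on a degree $d$ hypersurface over $\F_q$", and (b) noting that passing from degree $d$ hypersurfaces to their irreducible components of degree $\le d$ only enlarges the set of $\F_{q^m}$-points being covered, so the union bound over $\mathcal{H}$ is valid. The real work — getting a \emph{useful} bound out of this — lies in the subsequent sections, where point-counting estimates for irreducible hypersurfaces (Proposition~\ref{prop:LW}) and bounds on the number of such hypersurfaces of each degree are fed into the right-hand side; at this stage we are only recording the elementary covering inequality. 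I would write the proof as the two short paragraphs above: first the reformulation of $\mu$ via "no nonzero $F$ vanishes at $P$ $\iff$ $P$ on no degree $d$ hypersurface," then the component-wise union bound.
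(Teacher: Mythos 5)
Your proof is correct and follows essentially the same route as the paper's: reinterpret $\mu(q,n,d,m)$ as the proportion of points avoiding every degree $d$ hypersurface over $\F_q$, observe that each such hypersurface decomposes into $\F_q$-irreducible components of degree at most $d$ (hence members of $\mathcal{H}$), and finish with the union bound. The only cosmetic difference is that you spell out the reformulation of $\mu$ in the case $r=m$, which the paper records as a remark just before the lemma statement rather than inside the proof.
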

\begin{proof}
Let $X=\{F=0\}$ be a degree $d$ hypersurface defined over $\F_q$. Factorize $F=F_1F_2\cdots F_k$ into the product of irreducible polynomials over $\F_q$. For each $1 \leq i \leq k$, let $X_i$ be the hypersurface defined by $F_i=0$; then $X_i \in \mathcal{H}$. Thus, $X(\F_{q^m})=\cup_{i=1}^{k}X_i(\F_{q^m}) \subseteq \cup_{H \in \mathcal{H}} H(\F_{q^m})$, and the lemma follows.
\end{proof}

In view of Lemma~\ref{lem:obs}, to prove $\mu(q,n,d,m)>0$, it suffices to show that the sum of $\#H(\F_{q^m})$ over $H\in\mathcal{H}$ is strictly less than the number of points in $\bP^n(\F_{q^m})$. Since each $H\in\mathcal{H}$ is irreducible, an upper bound on $\#H(\F_{q^m})$ follows from Lemma~\ref{lem:LW}. Now we give an upper bound on $\#\mathcal{H}$.

\begin{lemma}\label{lem:H}
    $$
    \#\mathcal{H}\leq \frac{q^m}{q-1} \bigg(1-\frac{1}{q^{mn/(n+d)}}+\frac{2}{q^{m-(n+1)}}\bigg).
    $$
\end{lemma}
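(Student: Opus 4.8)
The plan is to bound $\#\mathcal{H}$ by bounding, for each degree $e \leq d$, the number of irreducible homogeneous polynomials of degree $e$ in $n+1$ variables over $\F_q$, up to scalar. Write $\mathcal{H}_e$ for the set of irreducible degree $e$ hypersurfaces, so that $\#\mathcal{H} = \sum_{e=1}^{d} \#\mathcal{H}_e$. A crude bound is $\#\mathcal{H}_e \leq \#(\mathcal{S}_{n,e}\setminus\{0\})/(q-1) = (q^{\binom{n+e}{n}}-1)/(q-1)$, but this is too weak for the top degree $e = d$: we need to shave off the reducible polynomials. So the main work is at $e = d$: the number of \emph{irreducible} degree $d$ hypersurfaces is at most $(1-t)(q^m-1)/(q-1)$ where $t = t(q,n,d)$ is the proportion of reducible polynomials from Section~\ref{sec:bound-reducible}, and we have the bound $t \geq q^{-N_1}$ coming from just the factorizations into a linear times a degree $d-1$ factor --- actually we want a \emph{lower} bound on $t$, i.e. we want to know that a definite proportion of polynomials are reducible.

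First I would establish the lower bound $t \geq c\, q^{-N_1}$ for an explicit constant $c$ (something like $c = 1/2$ or better), where $N_1 = m - (n+1) - \binom{n+d-1}{n}$; roughly, products $\ell \cdot G$ with $\ell$ linear and $G$ of degree $d-1$ give $\approx q^{n+1} \cdot q^{\binom{n+d-1}{n}}/(q-1)$ distinct reducible polynomials (one must handle the overcounting coming from a polynomial having several linear factors, but inclusion--exclusion or a direct injectivity argument on a suitable subset suffices to lose only a bounded factor). This gives
\[
\#\mathcal{H}_d \leq \frac{(1-t)(q^m-1)}{q-1} \leq \frac{q^m}{q-1}\left(1 - c\,q^{-N_1}\right).
\]
Then for the lower-degree terms $e < d$ I would use the crude bound $\#\mathcal{H}_e \leq (q^{\binom{n+e}{n}}-1)/(q-1) < q^{\binom{n+e}{n}}/(q-1)$ and sum the geometric-ish series $\sum_{e=1}^{d-1} q^{\binom{n+e}{n}}$; since the exponents $\binom{n+e}{n}$ grow, this sum is dominated by its last term $q^{\binom{n+d-1}{n}} = q^{m - N_1 - (n+1)}$ times a small constant, which I would package as $\leq \frac{q^m}{q-1}\cdot \frac{2}{q^{m-(n+1)}}$ after noting $\binom{n+d-1}{n} = m - \binom{n+d-1}{n-1} \leq m - (n+1)$ for $d \geq 2$, $n\geq 2$ (using $\binom{n+d-1}{n-1}\geq n+1$). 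Finally I would match the constant: we need $c\, q^{-N_1} \geq q^{-mn/(n+d)}$, i.e. $N_1 \leq \frac{mn}{n+d} - \log_q(1/c)$; here $N_1 = m - (n+1) - \binom{n+d-1}{n}$ and $\frac{mn}{n+d} = m\cdot\frac{n}{n+d}$, and since $\binom{n+d-1}{n} = m\cdot\frac{d}{n+d}$ exactly (because $\binom{n+d-1}{n}\big/\binom{n+d}{n} = \frac{d}{n+d}$), we get $m - \binom{n+d-1}{n} = m\cdot\frac{n}{n+d} = \frac{mn}{n+d}$ on the nose, so $N_1 = \frac{mn}{n+d} - (n+1) < \frac{mn}{n+d}$, and the inequality $c\,q^{-N_1}\geq q^{-mn/(n+d)}$ holds as long as $c \geq q^{-(n+1)}$, which is automatic.

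The main obstacle I anticipate is the lower bound on $t$: counting reducible polynomials \emph{from below} requires controlling overcounting, since a single reducible $F$ can factor as $\ell\cdot G$ in many ways. The clean fix is to count ordered pairs $(\ell, G)$ with $\ell$ a monic linear form in a fixed variable (or, more carefully, to count pairs where $\ell$ is an arbitrary linear form and then divide by $q-1$ for scaling and bound the multiplicity of linear factors), and to observe that the number of degree-$d$ polynomials with at least two distinct linear factors is lower order --- so that the map $(\ell,G)\mapsto \ell G$ is "mostly injective" and the image has size $\geq \frac12\cdot\#\{\ell\}\cdot\#\{G\}/(q-1)$ for $q$ and the parameters in our range. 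Alternatively, and perhaps more robustly, I could avoid a sharp lower bound on $t$ entirely: observe that $\#\mathcal{H}$ counts irreducible hypersurfaces of \emph{every} degree $\leq d$, and a polynomial of degree $d$ that has a linear factor is manifestly reducible, hence its non-linear part (of degree $d-1$) is a distinct object --- so I would directly exhibit an injection from a large set of degree-$d$ hypersurfaces to "already-counted" data, showing $\#\mathcal{H}$ misses at least $\frac{q^m}{q-1}q^{-mn/(n+d)}$ many degree-$d$ hypersurfaces. I would carry out the straightforward counting estimates first, isolate the precise constant needed in the lower bound for $t$, and only then optimize the overcounting argument to meet it.
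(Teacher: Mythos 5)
Your degree-$d$ analysis is sound (in fact cleaner than you suggest: the map $G\mapsto x_0G$ from $\mathcal{S}_{n,d-1}$ to $\mathcal{S}_{n,d}$ is injective, so the number of degree-$d$ hypersurfaces divisible by $x_0$ is \emph{exactly} $h_{d-1}\colonequals\frac{q^{\binom{n+d-1}{n}}-1}{q-1}$, and these are all reducible; no overcounting lemma is needed). This yields
\[\#\mathcal{H}_d\leq \frac{q^m-q^{\binom{n+d-1}{n}}}{q-1}=\frac{q^m}{q-1}\left(1-q^{-mn/(n+d)}\right),\]
which matches the first two terms of the statement.

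However, there is a genuine gap in your treatment of the lower-degree sum. You bound $\#\mathcal{H}_e\leq h_e<q^{\binom{n+e}{n}}/(q-1)$ for $e<d$ and claim the resulting sum $\sum_{e=1}^{d-1}q^{\binom{n+e}{n}}/(q-1)$ can be packaged as $\leq \frac{q^m}{q-1}\cdot\frac{2}{q^{m-(n+1)}}=\frac{2q^{n+1}}{q-1}$ ``after noting $\binom{n+d-1}{n}\leq m-(n+1)$.'' That inequality goes the wrong direction: it shows the dominant term $q^{\binom{n+d-1}{n}}$ is at most $q^{m-(n+1)}$, which is far \emph{larger} than $q^{n+1}$ once $m>2(n+1)$. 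What you actually need is $\binom{n+d-1}{n}\leq n+1$, and this holds only for $d=2$. Concretely: after adding your two bounds, the $q^{\binom{n+d-1}{n}}$ saved at degree $d$ exactly cancels the $e=d-1$ term, leaving
\[\#\mathcal{H}\leq\frac{1}{q-1}\left(q^m+\sum_{e=1}^{d-2}q^{\binom{n+e}{n}}\right),\]
and the surviving $e=d-2$ term is $q^{\binom{n+d-2}{n}}/(q-1)$, which already exceeds the allowed $2q^{n+1}/(q-1)$ for every $d\geq 3$ (e.g.\ for $d=3$, $n=2$ one compares $q^6$ with $q^3$). So the plan as written fails for all $d\geq 3$.

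The missing idea is that you must discount reducibles at \emph{every} degree $j$, not just the top degree. The paper's proof shows $g_j\leq h_j-2h_{j-1}+h_{j-2}$ for all $j\geq 3$ (by counting reducibles of the form $H'\cup L_1$ and $H'\cup L_2$ for two fixed hyperplanes $L_1,L_2$, with inclusion--exclusion on $H''\cup L_1\cup L_2$); summing this over $j=3,\ldots,d$ telescopes to $\#\mathcal{H}\leq h_d-h_{d-1}+2h_1$. The effect of the telescoping is precisely to replace your intractable $\sum_{e<d}h_e$ by the single small term $2h_1=\frac{2(q^{n+1}-1)}{q-1}$, which is what produces the $\frac{2}{q^{m-(n+1)}}$ in the statement. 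Your approach cannot be rescued without incorporating some version of this per-degree cancellation.
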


\begin{proof}
For each positive integer $j$, let       
\[h_{j}\colonequals\frac{1}{q-1}\left(q^{\binom{n+j}{j}}-1\right)\]
denote the number of hypersurfaces of degree $j$ in $\mathbb{P}^n$ over $\FF_q$. Let $g_{j}$ denote the number of irreducible degree $j$ hypersurfaces in $\mathbb{P}^n$ over $\FF_q$.

For each $j \geq 3$, we claim that
\begin{equation}\label{eq:g_j}
g_{j} \leq h_{j}-2h_{j-1}+h_{j-2}.    
\end{equation}
To prove this, it suffices to give a lower bound on $h_{j}-g_{j}$, namely the number of reducible hypersurfaces over $\F_q$ with degree $j$. We explicitly construct reducible hypersurfaces with degree $j$ of the form $H'\cup L_1$ and $H'\cup L_2$, where $L_1$ and $L_2$ are distinct hyperplanes and $H'$ is any hypersurface of degree $j-1$. We double counted hypersurfaces of the form $H''\cup L_1\cup L_2$, where $H''$ is any hypersurface with degree $j-2$; thus, there are at least $2h_{j-1}-h_{j-2}$ distinct reducible degree $j$ hypersurfaces. Hence, $h_j-g_j\geq 2h_{j-1}-h_{j-2}$, yielding the desired inequality ~\eqref{eq:g_j}. 

It follows from inequality~\eqref{eq:g_j} that
\begin{align}    
\# \mathcal{H}
&= g_{1}+g_{2}+\sum_{j=3}^{d} g_{j} \notag\\
&\leq h_{1}+h_{2}+\sum_{j=3}^{d} (h_{j}-2h_{j-1}+h_{j-2}) \notag\\
&=h_{d}-h_{d-1}+2h_{1} \notag\\
&=\frac{1}{q-1}\bigg(q^{\binom{n+d}{d}}-q^{\binom{n+d-1}{d-1}}+2(q^{n+1}-1)\bigg) \notag\\
&\leq \frac{q^m}{q-1} \bigg(1-\frac{1}{q^{\binom{n+d}{d}-\binom{n+d-1}{d-1}}}+\frac{2}{q^{m-(n+1)}}\bigg).
\end{align}
Finally, we observe that $m=\binom{n+d}{d}=\frac{n+d}{d}\binom{n+d-1}{d-1}$, thus $\binom{n+d}{d}-\binom{n+d-1}{d-1}=\frac{mn}{n+d}$.
\end{proof}

Now we present a simple proof of Theorem~\ref{thm:AGR} for finite fields, which is the main result in \cite{AGR24}.
\begin{proof}[Proof of Theorem~\ref{thm:AGR} for $K$ finite]
Let $K=\F_q$ with $q \geq 3$. In view of \cite[Proposition 3.1]{AGR24}, we can assume $d\geq q \geq 3$. By the reductions in Section \ref{sec:special cases reductions}, we can assume $n\geq 2$. Note that $m=\binom{n+d}{d}$ is greater than both $d$ and $n$.

By Lemma~\ref{lem:LW}, for each $H \in \mathcal{H}$, we have
\begin{align}
\# H(\F_{q^m}) 
&\leq \frac{1}{q^m-1} \big(q^{mn}-1+(d-1)(d-2) q^{m(n-1/2)}+5d^{13/3} q^{m(n-1)}\big) \label{eq:HF2}\\
&\leq \frac{q^{mn}}{q^m-1} \bigg(1+\frac{m^2}{q^{m/2}}+\frac{5m^{13/3}}{q^m}\bigg) \label{eq:HF}.
\end{align}    
This upper bound is uniform across all $H \in \mathcal{H}$; it depends only on $d$ and not on $\deg(H)$. Since $d\geq 3$ and $n \geq 2$, we have 
\begin{equation}\label{eq:m bound}
    m=\binom{n+d}{d}\geq \binom{n+3}{3}=(n+1)\cdot \frac{(n+3)(n+2)}{6}>3(n+1).
\end{equation}
The inequality \eqref{eq:m bound} leads to $m-(n+1) \geq \frac{2m}{3}$. Thus, Lemma~\ref{lem:H} implies that
\begin{align*}
    \#\mathcal{H}&\leq \frac{q^m}{q-1} \bigg(1-\frac{1}{q^{mn/(n+d)}}+\frac{2}{q^{m-(n+1)}}\bigg)\leq \frac{q^m}{q-1} \bigg(1+\frac{2}{q^{2m/3}}\bigg).
\end{align*}
We multiply these bounds on $\# H(\mathbb{F}_{q^m})$ and $\# \mathcal{H}$ to obtain an upper bound on $\sum_{H\in \mathcal{H}}\#H(\F_{q^m})$: 
\begin{equation}\label{eq:upp-bound-on-sum-of-H}
\sum_{H\in \mathcal{H}} \# H(\mathbb{F}_{q^m}) \leq \frac{q^{mn}}{q^m-1} \bigg(1+\frac{m^2}{q^{m/2}}+\frac{5m^{13/3}}{q^m}\bigg) \cdot \frac{q^m}{q-1} \bigg(1+\frac{2}{q^{2m/3}}\bigg).
\end{equation}
Since $q \geq 3$ and $m=\binom{n+d}{d}\geq 10$, we also have a lower bound on $\bP^n(\F_{q^m})$:
\begin{equation}\label{eq:lower-bound-on-Pn(Fqm)}
\#\bP^n(\F_{q^m})= \frac{q^{m(n+1)}}{q^m-1}\left(1-q^{-m(n+1)}\right)\geq \frac{q^{m(n+1)}}{q^m-1}\left(1-3^{-30}\right).
\end{equation}
In light of \eqref{eq:upp-bound-on-sum-of-H} and \eqref{eq:lower-bound-on-Pn(Fqm)}, to show $\sum_{H\in \mathcal{H}}\#H(\F_{q^m})<\#\bP^n(\F_{q^m})$, it suffices to prove
\begin{equation}\label{eq:eq111}
\bigg(1+\frac{m^2}{q^{m/2}}+\frac{5m^{13/3}}{q^m}\bigg) \bigg(1+\frac{2}{q^{2m/3}}\bigg) <(q-1)\left(1-\frac{1}{3^{30}}\right).
\end{equation}
By Lemma~\ref{lem: poly exp}, this inequality holds for all $q\geq 3$ and $m\geq 12$, or for $q\geq 4$ and $m\geq 7$. The only case remaining to verify is $q=3, n=2, d=3$ (so $m=10$); in this case, a point $P \in \bP^n(\F_{q^m})$ with the required property has been explicitly constructed at the end of \cite[Section 6]{AGR24}.
\end{proof}

Next, we consider the case $q=2$. We first prove a result that holds for large $d$.
\begin{theorem}\label{thm:q 2 case}
Let $q=2$ and $r=m$. If $n\geq 2$ and $d\geq\max(6,n+1)$, then Theorem~\ref{thm:main} holds.
\end{theorem}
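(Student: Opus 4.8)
The plan is to apply Lemma~\ref{lem:obs}, reducing the statement to showing that $\sum_{H\in\mathcal{H}}\#H(\F_{2^m})<\#\bP^n(\F_{2^m})$, where $\mathcal{H}$ is the set of irreducible hypersurfaces over $\F_2$ of degree at most $d$. As in the proof of Theorem~\ref{thm:AGR} for finite fields, I would bound each $\#H(\F_{2^m})$ uniformly using Proposition~\ref{prop:LW}, obtaining a factor of the shape $\frac{q^{mn}}{q^m-1}\bigl(1+\frac{m^2}{q^{m/2}}+\frac{5m^{13/3}}{q^m}\bigr)$ with $q=2$, and bound $\#\mathcal{H}$ using Lemma~\ref{lem:H}, which gives a factor $\frac{q^m}{q-1}\bigl(1-q^{-mn/(n+d)}+2q^{-(m-(n+1))}\bigr)$. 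The product of these two bounds must be compared against $\#\bP^n(\F_{2^m})=\frac{q^{m(n+1)}}{q^m-1}(1-q^{-m(n+1)})$. After cancelling $\frac{q^{m(n+1)}}{q^m-1}$ from both sides, the inequality to establish becomes, roughly,
\[
\Bigl(1+\tfrac{m^2}{2^{m/2}}+\tfrac{5m^{13/3}}{2^m}\Bigr)\Bigl(1-\tfrac{1}{2^{mn/(n+d)}}+\tfrac{2}{2^{m-(n+1)}}\Bigr)<(q-1)\bigl(1-2^{-m(n+1)}\bigr)=1-2^{-m(n+1)}.
\]

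The crucial point — and the reason the hypothesis $d\geq\max(6,n+1)$ appears — is that for $q=2$ the right-hand side is only about $1$, so unlike the $q\geq 3$ case we cannot afford the naive bound $1-q^{-mn/(n+d)}\leq 1$ on the $\#\mathcal{H}$ factor; we genuinely need the negative term $-2^{-mn/(n+d)}$ to beat the positive error terms $\frac{m^2}{2^{m/2}}$ and $\frac{2}{2^{m-(n+1)}}$. The condition $d\geq n+1$ forces $\frac{n}{n+d}\leq\frac12$ in a controlled way — in fact $d\geq n+1$ gives $\frac{mn}{n+d}$ noticeably smaller than $\frac m2$, so $2^{-mn/(n+d)}$ dominates $m^2 2^{-m/2}$ — while $d\geq 6$ (together with $n\geq 2$) makes $m=\binom{n+d}{d}$ large enough, and the earlier inequality $m-(n+1)\geq\frac{2m}{3}$ (established via \eqref{eq:m bound}, which needs $d\geq 3$) controls the $2q^{-(m-(n+1))}$ term. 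I would make these size comparisons explicit: expand the product on the left, group every term as a sum of the form $\sum f_i(m)q^{-g_i(m)}$ after substituting the worst case of $\frac{n}{n+d}$ allowed by the constraints, and verify the resulting inequality holds for all relevant $(q,m)$ by Lemma~\ref{lem: poly exp}.

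The main obstacle is therefore purely the bookkeeping of balancing the leading negative term against the positive error terms in the regime $q=2$: one has to express $n$ (or $\frac{n}{n+d}$) in terms of $m$ and $d$ carefully enough that the exponents line up. Since $d\geq\max(6,n+1)$ and $n\geq 2$, one has $m=\binom{n+d}{d}\geq\binom{n+n+1}{n+1}=\binom{2n+1}{n+1}$, which already forces $m\geq\binom 53=10$ when $n=2$ and grows quickly; combined with $\frac{mn}{n+d}\leq\frac{m}{2}-c$ for a suitable absolute constant $c$ (coming from $d\geq n+1$), the term $2^{-mn/(n+d)}$ is at least a constant multiple of $2^{-m/2}\cdot 2^c$, which dominates $m^2 2^{-m/2}$ once $m$ is large; the finitely many small remaining pairs $(n,d)$ — essentially $n=2$ with $d\in\{6,7,\ldots\}$ near the boundary, and a handful of others — are then dispatched by Lemma~\ref{lem: poly exp} or, if necessary, by the explicit computation in Appendix~\ref{sec:computer}. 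I would present the argument so that the theoretical estimate covers all $(n,d)$ with $m$ above an explicit threshold, leaving only a finite check.
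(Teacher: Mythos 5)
Your overall strategy is the same as the paper's: reduce to counting $\F_{2^m}$-points on the union of irreducible hypersurfaces via Lemma~\ref{lem:obs}, bound $\#H(\F_{2^m})$ by Proposition~\ref{prop:LW} and $\#\mathcal{H}$ by Lemma~\ref{lem:H}, and then balance the error terms against the negative term $-2^{-mn/(n+d)}$. However, there is a genuine gap in your version: you bound $\Delta(2^m,d)$ by $\frac{m^2}{2^{m/2}}+\frac{5m^{13/3}}{2^m}$ (replacing $d$ by $m$, as the paper does in its proof of Theorem~\ref{thm:AGR} where $q\geq 3$), whereas the paper's proof of Theorem~\ref{thm:q 2 case} deliberately keeps the error terms as $\frac{d^2}{2^{m/2}}+\frac{5d^{13/3}}{2^m}$. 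This distinction matters precisely because $q=2$ gives you no slack: you must show the negative term wins, and $m=\binom{n+d}{n}$ can be vastly larger than $d$, so $m^2 2^{-m/2}$ is much harder to beat than $d^2 2^{-m/2}$.

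Concretely, your claim that $\frac{mn}{n+d}\leq\frac{m}{2}-c$ ``for a suitable absolute constant $c$'' coming from $d\geq n+1$, and that $2^{-m/2}\cdot 2^c$ ``dominates $m^2 2^{-m/2}$ once $m$ is large,'' is false as stated: for any fixed constant $c$ the ratio $2^c/m^2\to 0$. What you actually need is $c=m\bigl(\frac12-\frac{n}{n+d}\bigr)=\frac{(d-n)m}{2(n+d)} > 2\log_2 m$, and this is neither a constant nor automatic in the permitted range. For example, at $(n,d)=(2,6)$ one has $m=28$ and $c=7$, so $2^{c-1}=64$, which does \emph{not} exceed $m^2+3=787$; the $5m^{13/3}2^{-m}$ term also fails against $2^{-mn/(n+d)}$ there. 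So your formulation would not close at least at $(n,d)=(2,6),(2,7)$, and establishing that this misbehavior is confined to finitely many pairs requires its own (nontrivial) argument, not the naive ``$m$ large enough'' heuristic. The paper sidesteps all of this by keeping the bound in $d$ and directly proving $2^{m(\frac12-\frac{n}{n+d})}>2d^2+6$ and $2^{\binom{n+d-1}{d-1}}>10d^{13/3}$ under the stated hypotheses, via a short case analysis on $n\in\{2\},\{3\},\{\geq 4\}$.
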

\begin{proof}
As in inequality~\eqref{eq:HF}, for each $H \in \mathcal{H}$, we have
\begin{align}\label{eq:HF2m}
\# H(\F_{2^m}) 
&\leq \frac{2^{mn}}{2^m-1} \bigg(1+\frac{d^2}{2^{m/2}}+\frac{5d^{13/3}}{2^m}\bigg).
\end{align}
Now we bound $\#\mathcal{H}$ using Lemma~\ref{lem:H}.
Recall from the proof of Lemma~\ref{lem:H} that $\frac{mn}{n+d}=m-\binom{n+d-1}{d-1}$. For $d\geq 6$ we have
$$2^{\binom{n+d-1}{d-1}}\geq 2^{\binom{d+1}{2}}>10d^{13/3},$$
which implies
\begin{equation}\label{eq:half bound 1}
    \frac{1}{2}\cdot\frac{1}{2^{mn/(n+d)}}>\frac{5d^{13/3}}{2^m}.
\end{equation}
Observe that
$$
m\bigg(\frac{1}{2}-\frac{n}{n+d}\bigg) =\frac{n+d}{d}\binom{n+d-1}{d-1}\bigg(\frac{d-n}{2(n+d)}\bigg)=\frac{d-n}{2d}\binom{d+n-1}{n}.$$
For $n\geq 4$, this is greater than or equal to $\frac{1}{2d}\binom{d+3}{4}$ because $d\geq n+1$; for $n=3$ or $n=2$, this is greater than or equal to $\frac{3}{2d}\binom{d+2}{3}$ or $\frac{4}{2d}\binom{d+1}{2}$ respectively because $d\geq 6$. These bounds imply that
$$
2^{m(\frac{1}{2}-\frac{n}{n+d})}>2d^2+6
$$
for all $n\geq 2$ and $d\geq\max(6,n+1)$.
Therefore,
\begin{equation}\label{eq:half bound 2}
    \frac{1}{2}\cdot \frac{1}{2^{mn/(n+d)}}>\frac{d^2+3}{2^{m/2}}.
\end{equation}
Using the bound $m-(n+1)>\frac{m}{2}$, which follows from inequality~\eqref{eq:m bound}, we combine inequalities~\eqref{eq:half bound 1} and~\eqref{eq:half bound 2} with Lemma~\ref{lem:H} to obtain:
$$\#\mathcal{H}< 2^m\left(1-\frac{d^2}{2^{m/2}}-\frac{5d^{13/3}}{2^m}-\frac{1}{2^{m/2}}\right).$$
Multiplying the bound on $\# \mathcal{H}$ by the upper bound on $\# H(\F_{2^m})$ for each $H\in\mathcal{H}$, we conclude
\begin{align*}
    \sum_{H\in\mathcal{H}}\# H(\F_{2^m})&< \frac{2^{m(n+1)}}{2^m-1} \bigg(1+\frac{d^2}{2^{m/2}}+\frac{5d^{13/3}}{2^m}\bigg)\left(1-\frac{d^2}{2^{m/2}}-\frac{5d^{13/3}}{2^m}-\frac{1}{2^{m/2}}\right)\\
    &< \frac{2^{m(n+1)}}{2^m-1} \left(1-\frac{1}{2^{m/2}}\right)< \frac{2^{m(n+1)}-1}{2^m-1},
\end{align*}
proving the desired bound by Lemma~\ref{lem:obs}.
\end{proof}

We also consider the case $q=d=2$ separately in the following result, as the assumption $d=2$ allows us to get much tighter bounds on the number of points on each hypersurface.

\begin{theorem}\label{thm:q=d=2}
Let $q=2$ and $r=m$. Then Theorem \ref{thm:main} holds if $d=2$ and $n\geq 4$.
\end{theorem}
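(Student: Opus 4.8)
The plan is to follow the strategy of Section~\ref{sec:avoid}: bound $\sum_{H\in\mathcal H}\#H(\F_{2^m})$ and show it is strictly less than $\#\bP^n(\F_{2^m})$, invoking Lemma~\ref{lem:obs}. Since $d=2$, the collection $\mathcal H$ consists only of irreducible hyperplanes (degree $1$) and irreducible quadrics (degree $2$) over $\F_2$. The number of hyperplanes is $h_1=\frac{2^{n+1}-1}{2-1}=2^{n+1}-1$, and each contains exactly $\frac{2^{mn}-1}{2^m-1}$ points of $\bP^n(\F_{2^m})$. For the quadrics, the total number of quadric hypersurfaces is $h_2=\frac{2^{\binom{n+2}{2}}-1}{1}$, of which at most $h_2$ are irreducible; more usefully, $\#\mathcal H$ restricted to quadrics is at most $h_2-2h_1+h_0$ by the construction in Lemma~\ref{lem:H} (or one can just use $h_2$). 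The key point is that for $d=2$ the Lang--Weil error term $\Delta(2^m,2)=(2-1)(2-2)2^{-m/2}+5\cdot 2^{13/3}2^{-m}=5\cdot 2^{13/3}\cdot 2^{-m}$ has no $2^{-m/2}$ term at all, so Proposition~\ref{prop:LW} gives, for each irreducible quadric $H$,
\[
\#H(\F_{2^m})\le\frac{2^{mn}-1+2^{mn}\cdot 5\cdot 2^{13/3}\cdot 2^{-m}}{2^m-1}=\frac{2^{mn}}{2^m-1}\Bigl(1+\frac{5\cdot 2^{13/3}}{2^m}\Bigr)-\frac{1}{2^m-1}.
\]

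Next I would assemble the sum. Writing $m=\binom{n+2}{2}=\frac{(n+1)(n+2)}{2}$, and bounding $\#\{\text{irreducible quadrics}\}\le h_2<2^m$ and the hyperplane contribution exactly, we get
\begin{align*}
\sum_{H\in\mathcal H}\#H(\F_{2^m})
&\le (2^{n+1}-1)\cdot\frac{2^{mn}-1}{2^m-1}+2^m\cdot\frac{2^{mn}}{2^m-1}\Bigl(1+\frac{5\cdot 2^{13/3}}{2^m}\Bigr)\\
&<\frac{2^{mn}}{2^m-1}\Bigl(2^{n+1}+2^m+5\cdot 2^{13/3}\Bigr).
\end{align*}
On the other hand,
\[
\#\bP^n(\F_{2^m})=\frac{2^{m(n+1)}-1}{2^m-1}>\frac{2^{mn}}{2^m-1}\bigl(2^m-1\bigr).
\]
So it suffices to prove $2^{n+1}+5\cdot 2^{13/3}<2^m-1-2^m=-1$, which is absurd — so a crude bound on the quadric count is not enough, and I must instead use the sharper count $\#\{\text{irreducible quadrics}\}\le h_2-2h_1+h_0=2^m-1-2(2^{n+1}-1)+1=2^m-2^{n+2}+2$, or better, deploy a genuinely better bound on the number of irreducible (equivalently geometrically irreducible-over-$\F_2$-up-to-$\F_4$) quadrics. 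The cleanest route: a reducible quadric over $\F_2$ is either a product of two $\F_2$-hyperplanes or a norm of an $\F_4$-hyperplane, and the number of quadrics that are \emph{not} irreducible over $\F_2$ is at least on the order of $2^{2(n+1)}/2$; subtracting this from $h_2$ gives $\#\{\text{irreducible quadrics}\}\le 2^m(1-c\,2^{-(2n+1-\binom{n+2}{2})})$ for a constant $c$, and since $\binom{n+2}{2}-(2n+1)=\frac{(n-1)(n-2)}{2}\ge 1$ for $n\ge 4$ (with strict growth), this correction $c\,2^{-(2n+1-m)}=c\,2^{-(m-2n-1)}$ beats the Lang--Weil error $5\cdot 2^{13/3}\cdot 2^{-m}$ precisely when $2n+1\ge \text{const}$, i.e.\ for $n$ large, and the finitely many small $n$ are checked by hand.

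Carrying this out: with $\#\mathcal H_{\text{quad}}\le 2^m-2^{n+2}+2$ and the hyperplane term exactly $(2^{n+1}-1)\frac{2^{mn}-1}{2^m-1}$, the total is at most $\frac{2^{mn}}{2^m-1}\bigl((2^m-2^{n+2}+2)(1+5\cdot2^{13/3}2^{-m})+2^{n+1}\bigr)$, and one checks $(2^m-2^{n+2}+2)(1+5\cdot2^{13/3}2^{-m})+2^{n+1}<2^m-1$, i.e.\ $-2^{n+2}+2+2^{n+1}+5\cdot2^{13/3}(1-2^{n+2-m}+2^{1-m})<-1$, i.e.\ $-2^{n+1}+5\cdot2^{13/3}<-3$ roughly, which holds as soon as $2^{n+1}>5\cdot2^{13/3}+3$, i.e.\ $n\ge 5$. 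The single remaining case $n=4$ (so $m=15$) is verified by an explicit computation — either a direct Lang--Weil estimate with the exact count of irreducible quadrics over $\F_2$ in $\bP^4$ (which is $h_2-(\text{number of reducible ones})$, computable exactly since reducible quadrics over $\F_2$ are classified: rank $\le 2$ forms), or by the computer search of Appendix~\ref{sec:computer}. I expect the main obstacle to be the bookkeeping needed to pin down a good enough lower bound on the number of reducible quadrics over $\F_2$ (the subtlety being the $\F_4$-conjugate-pair-of-hyperplanes quadrics, which also contribute and must not be double-counted against the $\F_2$-reducible ones), together with ensuring the constant $c$ is large enough that the $n=4$ boundary case either closes directly or is cleanly deferred to the appendix.
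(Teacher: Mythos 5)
Your overall strategy coincides with the paper's: apply Lemma~\ref{lem:obs}, bound $\#H(\F_{2^m})$ via Proposition~\ref{prop:LW} (noting $\Delta(2^m,2)=5\cdot 2^{13/3}\cdot 2^{-m}$ has no $2^{-m/2}$ term), and then beat the Lang--Weil error by a sharp count of $\#\mathcal{H}$. Where you diverge is in how you count irreducible quadrics, and this is where the argument doesn't close cleanly. Your first two attempts (using $h_2$, then $h_2-2h_1+h_0$) give $\#\mathcal H\lesssim 2^m-2^{n+2}$, which leads to a threshold $2^{n+1}>5\cdot 2^{13/3}+3\approx 103.8$; that forces $n\geq 6$, not $n\geq 5$ as you wrote, leaving $n\in\{4,5\}$ open. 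You then correctly realize the reducible quadrics number about $2^{2n+1}$, but there are two issues: (1) you describe a reducible quadric over $\F_2$ as "either a product of two $\F_2$-hyperplanes or a norm of an $\F_4$-hyperplane" — the latter is false, since an $\F_4$-norm is \emph{irreducible} over $\F_2$ and hence belongs to $\mathcal H$, so subtracting those would be an error in the wrong direction; and (2) you leave the constant $c$ and the small-$n$ boundary unresolved, defaulting to a computer check for $n=4$. The paper instead observes that the $\F_2$-reducible quadrics are \emph{exactly} the products of two $\F_2$-hyperplanes, so $g_2=h_2-\binom{h_1}{2}-h_1$ exactly and thus $\#\mathcal H=h_1+g_2=h_2-\binom{h_1}{2}=2^m-1-(2^{n+1}-1)(2^n-1)\leq 2^m-2^{2n}$. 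Since $2^{2n}\geq 256>5\cdot 2^{13/3}+1\approx 101.8$ already for $n=4$, the product of the point bound and $\#\mathcal H$ is strictly below $\frac{2^{m(n+1)}-1}{2^m-1}$ for all $n\geq 4$ in one line, with no case left to the appendix. So the idea is right, but you are missing the elementary exact formula for $g_2$ that makes the whole range $n\geq 4$ close directly.
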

\begin{proof}
As in inequality~\eqref{eq:HF2}, for each $H \in \mathcal{H}$, we have
$$
\# H(\F_{2^m}) 
\leq \frac{2^{mn}}{2^m-1} \left(1+\frac{5 \cdot 2^{13/3}}{2^m}\right).
$$
In this case, we improve the upper bound on $\mathcal{H}$ in Lemma~\ref{lem:H} as follows. Borrowing the notation from Lemma~\ref{lem:H}, note that $g_{2}= h_{2}-\binom{h_{1}}{2}-h_{1}$. It follows that
\begin{align*}
\#\mathcal{H}
&=g_{1}+g_{2}=h_{1}+g_{2}=h_{2}-\binom{h_{1}}{2}=2^{m}-1-\frac{(2^{n+1}-1)(2^{n+1}-2)}{2}\\
&=2^m-1-(2^{n+1}-1)(2^n-1) \leq 2^m-2^{2n}.
\end{align*}
For $n\geq 4$, we have $2^{2n}>5\cdot 2^{13/3}+1$ and so $\#\mathcal{H}< 2^m-5\cdot 2^{13/3}-1$. We conclude that
\begin{align*}
    \sum_{H\in\mathcal{H}}\# H(\F_{2^m})&< \frac{2^{m(n+1)}}{2^m-1} \bigg(1+\frac{5\cdot 2^{13/3}}{2^m}\bigg)\left(1-\frac{5\cdot 2^{13/3}}{2^m}-\frac{1}{2^{m}}\right)\\
    &< \frac{2^{m(n+1)}}{2^m-1} \left(1-\frac{1}{2^{m}}\right)< \frac{2^{m(n+1)}-1}{2^m-1}.\qedhere
\end{align*}
\end{proof}

We remark that the approach above does not allow us to prove the theorem when $d\geq 3$ is small compared to $n$, because the $(d-1)(d-2)q^{-m/2}$ term from the Lang--Weil bounds is significantly larger than the $-q^{-\binom{n+d}{d}+\binom{n+d-1}{d-1}}$ term from the count of irreducible hypersurfaces. Consequently, we need an alternate approach for large $n$.

\section{Third method: inclusion-exclusion}\label{sec:inc-exc}

\subsection{Bounds on reducible intersections of hypersurfaces}\label{sec:singular locus}

Let $H_1,\ldots,H_k$ be randomly chosen degree $d$ hypersurfaces in $\bP^n$ defined over $\F_q$, and $X\colonequals H_1\cap\cdots\cap H_k$. We will show that $X$ is geometrically irreducible of dimension $n-k$ with ``high'' probability.

Counting reducible hypersurfaces ($k=1$) is relatively straightforward, because any reducible hypersurface is a union of hypersurfaces of smaller degree, and there is a natural parametrization of these. However, if $X$ is an intersection of $k\geq 2$ hypersurfaces, the irreducible components of $X$ may no longer each be expressible as an intersection of $k$ hypersurfaces (that is, they may not be complete intersections). As there is no convenient parametrization of the space of $(n-k)$-dimensional subvarieties of $\bP^n$, we will instead use the fact that any reducible variety must have a large singular locus, and bound the number of varieties with large singular locus.

\begin{lemma}\label{lem:geom red singular}
    Let $X/\F_q$ be a complete intersection in $\bP^n$ of dimension $n-k$ with $k\leq n/2$. If $X$ is geometrically reducible, then the singular locus of $X$ has dimension at least $n-2k$.
\end{lemma}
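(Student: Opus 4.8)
The plan is to exploit the fact that a complete intersection is locally cut out by $k$ equations, so its singular points are exactly the points where the Jacobian matrix of those equations drops rank, and then to show that if $X$ decomposes geometrically as $X_1\cup X_2$ then every point of the intersection $X_1\cap X_2$ forces this rank drop. First I would pass to the algebraic closure and write $X=X_1\cup X_2$ with $X_1,X_2$ proper closed subsets, where $X_1$ is an irreducible component and $X_2$ is the union of the remaining components; since $X$ is a complete intersection it is equidimensional (indeed Cohen--Macaulay, hence has no embedded components), so both $X_1$ and $X_2$ have pure dimension $n-k$. The key dimension input is that two subvarieties of $\bP^n$ of dimension $n-k$ must meet in dimension at least $n-2k$ (this uses $k\le n/2$ so that $2(n-k)\ge n$, i.e. the expected intersection dimension $n-2k$ is nonnegative); this is the standard projective intersection inequality, $\dim(X_1\cap X_2)\ge \dim X_1+\dim X_2-n = n-2k$.

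Next I would argue that $X_1\cap X_2\subseteq X_{\mathrm{sing}}$. Take a point $P\in X_1\cap X_2$; I want to show $X$ is not smooth of dimension $n-k$ at $P$. If $P$ lies on two distinct irreducible components of $X$, then the local ring $\mathcal{O}_{X,P}$ has at least two minimal primes, so $\operatorname{Spec}\mathcal{O}_{X,P}$ is reducible; but the local ring of a smooth variety is a domain (smooth points are in particular normal, hence the local ring is a domain, and a variety that is smooth at a point is locally irreducible there). Hence $P$ is a singular point of $X$. Therefore $X_1\cap X_2\subseteq X_{\mathrm{sing}}$, and combining with the dimension bound gives $\dim X_{\mathrm{sing}}\ge \dim(X_1\cap X_2)\ge n-2k$, which is the claim.

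The main obstacle, and the place to be careful, is making sure the decomposition is set up so that $X_1$ and $X_2$ genuinely meet: I need $X_1\cap X_2\ne\varnothing$, not merely $X_1,X_2\ne X$. This is where equidimensionality of complete intersections is essential — if the components had different dimensions, the lower-dimensional ones could fail to meet the top-dimensional one in the expected dimension, or the union $X_2$ of "other components" might be empty-looking after removing $X_1$. Since a complete intersection in $\bP^n$ is connected in dimension $\ge 1$ (it is connected whenever $\dim\ge 1$, by the connectedness of complete intersections, e.g. a consequence of the fact that $X$ is Cohen--Macaulay and $\mathcal{O}_X$ has depth $\ge 2$ along with Hartshorne's connectedness theorem), any two components must be joined through a chain of components, so after regrouping we may assume $X_1\cap X_2\ne\varnothing$; alternatively, if $n-2k\ge 1$ one may invoke the projective intersection inequality directly to conclude the meet is nonempty of dimension $\ge n-2k$, and if $n=2k$ the statement only asserts $\dim X_{\mathrm{sing}}\ge 0$, i.e. that $X_{\mathrm{sing}}$ is nonempty, which again follows from connectedness of the complete intersection $X$ (a connected but reducible variety has a singular point at any intersection of components). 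I would present the $n-2k\ge 1$ case via the intersection inequality and dispatch the boundary case $n=2k$ by the connectedness remark.
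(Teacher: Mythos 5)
Your argument is correct and is essentially the paper's argument: pass to $\overline{\F_q}$, use equidimensionality of the complete intersection to get two $(n-k)$-dimensional components, apply the projective intersection inequality to find an intersection of dimension at least $n-2k$, and observe that intersection points are singular because the local ring there is not a domain. One small remark: the detour through connectedness of complete intersections for the boundary case $n=2k$ is unnecessary, since the projective dimension theorem in $\bP^n$ already guarantees that two closed subvarieties of dimensions summing to at least $n$ meet (applied to the irreducible component $X_1$ and any irreducible component of $X_2$), so nonemptiness with the stated dimension bound comes for free in all cases $k\le n/2$.
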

\begin{proof}
    Let $C_1,C_2$ be two distinct irreducible components of $X_{\overline{\F_q}}$. Since $X$ is a complete intersection, $C_1$ and $C_2$ are projective of dimension $n-k$, so their intersection $D$ has dimension at least $n-2k$ and is contained in the singular locus of $X$.
\end{proof}

To bound the number of varieties with large singular locus, we adapt an argument due to Poonen, namely \cite[Lemma 2.6]{P04}. Bucur and Kedlaya also used a version of this technique \cite{BK12}, and we follow their exposition closely as they specifically consider the case of intersections of multiple hypersurfaces. The main difference in our approach is that instead of considering singular points of large degree, we consider singular subvarieties of large dimension; see also Remark~\ref{rem:difference}. A version of this method was previously used by Slavov to bound the number of varieties with large singular locus \cite[Section 6]{S15}.

Let $q\geq 2$ be a prime power and $p$ the characteristic of $\F_q$. We will eventually apply these results to the case $p=q=2$. 

\begin{lemma}\label{lem:vanish on Y}
    Fix $d\geq 1$ and a projective variety $Y\subseteq \bP^n$ of dimension $e\geq 1$.  The proportion of $f\in\mathcal{S}_{n,d}$ vanishing on $Y$ is bounded above by $q^{-\binom{d+e}{e}}$.
\end{lemma}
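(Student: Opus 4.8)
The plan is to reduce to a dimension count of polynomials vanishing on $Y$, and to lower-bound the codimension of that vanishing space by $\binom{d+e}{e}$. First I would replace $Y$ by a subvariety of minimal dimension still forcing the bound: since $\dim Y = e \geq 1$, $Y$ contains an irreducible curve-chain, and more to the point it suffices to prove the claim after replacing $Y$ by an $e$-dimensional linear section — but cleaner is to intersect $Y$ with a generic linear subspace $\Lambda \cong \bP^e$ so that $Y \cap \Lambda$ is nonempty of dimension $0$ only when $e = 0$; since $e \geq 1$ that is not quite what I want. Instead: the key move is that a degree $d$ form on $\bP^n$ vanishing on $Y$ restricts, along any linear embedding $\iota\colon \bP^e \hookrightarrow \bP^n$ with $\iota(\bP^e)\subseteq Y$... but $Y$ need not contain a linear $\bP^e$. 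So the right approach is the standard one: pick a point $P_0 \in Y(\overline{\F_q})$ and an $e$-dimensional linear subspace $\Lambda$ through $P_0$ meeting $Y$ in dimension exactly $0$ near $P_0$ is again backwards.

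Let me restart the reduction more carefully. The cleanest route: choose a linear projection $\pi\colon \bP^n \dashrightarrow \bP^e$ whose restriction to $Y$ is dominant and generically finite (possible since $\dim Y = e$). Then pull back the $\binom{d+e}{e}$ monomials of degree $d$ on $\bP^e$; these give $\binom{d+e}{e}$ forms of degree $d$ on $\bP^n$ whose restrictions to $Y$ are linearly independent (because their restrictions to the generic fiber... actually because $\pi|_Y$ is dominant, a linear relation among the pullbacks restricted to $Y$ would descend to a linear relation among the monomials on $\bP^e$). Now the space $W \subseteq \mathcal{S}_{n,d}$ of forms vanishing on $Y$ is the kernel of the restriction map $\mathcal{S}_{n,d} \to H^0(Y, \mathcal{O}_Y(d))$, and the image contains the $\binom{d+e}{e}$-dimensional space spanned by those pullbacks, so $\operatorname{codim} W \geq \binom{d+e}{e}$, i.e. $\#W \leq q^{\dim \mathcal{S}_{n,d} - \binom{d+e}{e}}$, giving proportion $\leq q^{-\binom{d+e}{e}}$. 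I would carry out the steps in this order: (1) set up the restriction map and identify $W$ as its kernel; (2) construct the projection $\pi$ and verify $\pi|_Y$ is dominant onto $\bP^e$; (3) show the pullbacks of the degree $d$ monomials on $\bP^e$ have linearly independent restrictions to $Y$; (4) conclude the codimension bound and translate into the proportion statement.

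The main obstacle is step (3) — ensuring genuine linear independence of the restricted pullbacks. One must be careful that "dominant" is enough: if $\sum c_M \pi^*(M)$ vanishes on $Y$, then since $\pi(Y)$ is dense in $\bP^e$, the polynomial $\sum c_M M$ vanishes on a dense subset of $\bP^e$, hence is the zero form, forcing all $c_M = 0$; this works over $\overline{\F_q}$ and descends. A secondary subtlety is that a generic linear projection $\bP^n \dashrightarrow \bP^e$ restricted to $Y$ is dominant over an infinite field, but over a small finite field $\F_q$ such a projection defined over $\F_q$ might not exist — however, one may work over $\overline{\F_q}$ throughout (the dimension of $W \otimes \overline{\F_q}$ equals $\dim W$), or note that one only needs a projection defined over some finite extension, which suffices for the dimension count. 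I would handle this by explicitly doing the argument over $\overline{\F_q}$ and remarking that $\dim_{\F_q} W = \dim_{\overline{\F_q}} (W \otimes_{\F_q} \overline{\F_q})$.
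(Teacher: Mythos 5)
Your proof is correct, and it takes a genuinely different route from the paper's. The paper's proof is very short: it identifies the desired proportion as $q^{-h_Y(d)}$, where $h_Y$ is the Hilbert function of the homogeneous coordinate ring of $Y$, and then cites a known lower bound \cite[Theorem 2.4]{S97} giving $h_Y(d)\geq \binom{d+e}{e}$. You instead prove the needed codimension bound from scratch via a Noether-normalization-style argument: project $\bP^n$ linearly onto $\bP^e$ from a center disjoint from (an $e$-dimensional component of) $Y_{\overline{\F_q}}$, so that $\pi|_Y$ is finite and surjective, and observe that the $\binom{d+e}{e}$ pullbacks of degree-$d$ monomials restrict to linearly independent sections on $Y$ since a nontrivial relation would descend to a nonzero form on $\bP^e$ vanishing on a Zariski-dense image. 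This is, in effect, an explicit proof of the special case of the cited Hilbert-function bound that the lemma needs, so it makes the argument self-contained at the cost of some length. A couple of points you flag correctly and should handle cleanly in a writeup: work over $\overline{\F_q}$ and use that $\dim_{\F_q}W=\dim_{\overline{\F_q}}(W\otimes_{\F_q}\overline{\F_q})$ (fine since $\F_q$ is perfect, so base change preserves the radical ideal), and reduce to an irreducible $e$-dimensional component of $Y$ before invoking the finite projection. The opening paragraph of the proposal contains several abandoned false starts; cut those, since the projection argument in the second paragraph is the actual proof.
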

\begin{proof}
    The Hilbert function $h_Y(d)$ measures the codimension in $\mathcal{S}_{n,d}$ of the space of polynomials vanishing on $Y$, so the desired probability is exactly $q^{-h_Y(d)}$.
    By taking $I$ to be the homogeneous ideal defining $Y$ and applying \cite[Theorem 2.4]{S97}, noting that $\deg I\geq 1$, we conclude that $$h_Y(d)\geq \binom{d+e+1}{e+1}- \binom{d+e}{e+1}=\binom{d+e}{e},$$
    giving the desired bound.
\end{proof}

Given $f\in \mathcal{S}_{n,d}$, let $H_f$ denote the subvariety of $\bP^n$ defined by $f=0$. The next lemma shows that for any irreducible variety $X$ of dimension $b$ with a small singular locus, there are many hypersurfaces $H_f$ for which $X\cap H_f$ has dimension $b-1$ and small singular locus. To accomplish this, we first restrict to a smooth affine open subset $U\subseteq X$ and assume that the first $b$ coordinates of the ambient affine space give local coordinates for $U$.

\begin{lemma}\label{lem: U cap Hf}
    Let $U$ be a smooth $b$-dimensional quasiprojective variety in $\mathbb{P}^n$ over $\FF_q$. Fix integers $d\geq p+1$ and $1\leq c\leq b$. Suppose that $U$ is contained in the standard affine chart with coordinates $t_1,\ldots,t_n$ and that $dt_1,\ldots,dt_b$ freely generate the module $\Omega_{U/\F_q}$ of differential $1$-forms on $U$. Given $f\in\mathcal{S}_{n,d}$ chosen uniformly at random, the probability that $\dim (U\cap H_f)=b$ or $\dim (U\cap H_f)_{\text{sing}}>b-c$ is at most 
    \[q^{-\binom{\lfloor d/p\rfloor +b}{b}}+\deg(\overline{U})\sum_{i=0}^{c-1}(d-1)^iq^{-\binom{\lfloor (d-1)/p\rfloor+b-i}{b-i}}.\]
\end{lemma}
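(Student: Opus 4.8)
The strategy follows Poonen's Bertini-over-finite-fields argument as refined by Bucur--Kedlaya, decoupling the estimate by the \emph{degree of the variety cut out} rather than by bounding singular points of large degree. The key idea is to write $f = f_0 + g_1^p t_1 + \cdots + g_m^p t_m + h^p$, where $f_0$ ranges over a set of coset representatives, the $g_j$ are forms of degree $\lfloor (d-1)/p\rfloor$, and $h$ is a form of degree $\lfloor d/p\rfloor$; since $\mathrm{char}\,\F_q = p$ and $d\geq p+1$, on the affine chart where $dt_1,\ldots,dt_m$ freely generate $\Omega_{U/\F_q}$ one has $\partial f/\partial t_j = g_j^p$ (up to the contribution of $f_0$, which can be absorbed), so the partial derivatives of $f$ along $U$ are controlled independently by the $g_j$. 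I would first choose $f_0$ and then select $g_1, \ldots, g_c$ one at a time, at each stage bounding the conditional probability that the relevant condition fails.

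\textbf{Key steps.} First I would handle the condition $\dim(U\cap H_f) = m$: this happens only if $f$ vanishes identically on (a component of) $U$, hence on $\overline U$, and by Lemma~\ref{lem:vanish on Y} applied with $e = m$ (or rather, by repackaging the last summand $h^p$) the probability is at most $q^{-\binom{\lfloor d/p\rfloor + m}{m}}$ — this accounts for the first term in the bound. Second, for the singular-locus condition: the singular locus of $U\cap H_f$ inside $U$ is the locus where $f$ and all $\partial f/\partial t_j$ vanish. Proceeding inductively, after choosing $f_0$ and $g_1,\ldots,g_{i}$ one obtains a locally closed set $W_i = U \cap H_f \cap \{g_1 = \cdots = g_i = 0\}$; if $\dim W_i \le m - c$ we are already done, and otherwise $W_i$ has a component $Z$ of dimension $> m - c \ge m - m = 0$, so $\dim Z \geq 1$. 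The probability that the next form $g_{i+1}$ vanishes on $Z$ is, by Lemma~\ref{lem:vanish on Y} (with $d$ replaced by $\lfloor (d-1)/p\rfloor$ and $e = \dim Z \geq m - i$ on the "bad" branch... ), at most $q^{-\binom{\lfloor (d-1)/p\rfloor + (m-i)}{m-i}}$; the factor $\deg(\overline U)(d-1)^i$ appears because $W_i$ has at most $\deg(\overline U)(d-1)^i$ components of top dimension (Bézout, since each $g_j$ has degree $\le d - 1$), and we union-bound over them. Summing the conditional failure probabilities over $i = 0, \ldots, c-1$ yields $\deg(\overline U)\sum_{i=0}^{c-1}(d-1)^i q^{-\binom{\lfloor (d-1)/p\rfloor + m - i}{m - i}}$, and adding the $\dim = m$ contribution gives the claimed bound.

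\textbf{Main obstacle.} The delicate point is the bookkeeping that makes the substitution $f = f_0 + \sum g_j^p t_j + h^p$ legitimate: one must verify that as $f_0$ ranges over fixed representatives and $(g_1,\ldots,g_m,h)$ range uniformly, the resulting $f$ is (close to) uniform in $\mathcal{S}_{n,d}$, that the degrees match (this is exactly why $d \ge p+1$ is needed, so that $\lfloor (d-1)/p \rfloor \ge 1$ and $t_j g_j^p$ genuinely has degree $\le d$), and — crucially — that on $U$ the identity $df = \sum_j g_j^p\, dt_j$ holds modulo the span of $df_0$, so that freeness of $dt_1, \ldots, dt_m$ over $\mathcal{O}_U$ lets us treat the $g_j$ as independent random forms when restricted to any subvariety $Z \subseteq U$. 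A subtlety is that $Z$ lives in $U$ but $g_j$ is a form on $\bP^n$; one needs that the restriction map from degree-$e$ forms on $\bP^n$ to sections of $\mathcal{O}_Z(e)$ is still governed by the Hilbert-function bound of Lemma~\ref{lem:vanish on Y}, applied to the projective closure $\overline Z$. Once these compatibilities are in place, the estimate is a routine union bound; I expect the write-up to spend most of its length carefully setting up the chart, the decomposition of $f$, and the inductive invariant on $\dim W_i$ and its number of top-dimensional components.
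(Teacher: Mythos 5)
Your plan follows the paper's approach: the Poonen-style decomposition $f=f_0+g_1^pt_1+\cdots+h^p$, an inductive union bound using Lemma~\ref{lem:vanish on Y}, and a B\'ezout count of the number of top-dimensional components at each stage. You also correctly isolate the two non-trivial points in the ``Main obstacle'' paragraph: that the $dt_1,\ldots,dt_m$ must freely generate $\Omega_{U/\F_q}$ so the $g_j$ decouple the partials, and that one applies the Hilbert-function bound to projective closures.

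However, your intermediate loci are defined incorrectly, in a way that breaks the inductive structure. You set $W_i = U\cap H_f\cap\{g_1=\cdots=g_i=0\}$; this is wrong in two respects. First, including $H_f$ makes $W_i$ depend on $h$, which is chosen \emph{last} in the decomposition, so $W_i$ is not determined when you pick $g_{i+1}$ and the conditional-probability argument over $g_{i+1}$ doesn't make sense. Second, the relevant conditions are $\partial f/\partial t_j=0$, not $g_j=0$: these differ by $\partial f_0/\partial t_j$, and it is precisely the shift trick $g_{i+1}=\gamma+\varepsilon$ (replace a particular solution $\gamma$ by $\gamma+\varepsilon$ and use reducedness of the component) that lets you pass from ``$\partial f/\partial t_{i+1}$ vanishes on a component'' to ``$\varepsilon$ vanishes on a component'' before invoking Lemma~\ref{lem:vanish on Y}. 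The paper instead defines $W_i=U\cap\{\partial f/\partial t_1=\cdots=\partial f/\partial t_i=0\}$, which depends only on $f_0,g_1,\ldots,g_i$, and only at the very end observes $(H_f\cap U)_{\text{sing}}=H_f\cap W_m\subseteq W_c$. Relatedly, your B\'ezout justification ``since each $g_j$ has degree $\leq d-1$'' is wrong — each $g_j$ has degree $\lfloor(d-1)/p\rfloor$; the degree $d-1$ belongs to the partial derivatives $\partial f/\partial t_j$, which is why $W_i$ must be cut out by these and not by the $g_j$ for the factor $(d-1)^i$ to come out as claimed.
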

\begin{proof}
     On the affine space with coordinates $t_1,\ldots,t_n$, the elements of $\mathcal{S}_{n,d}$ are given by polynomials of degree at most $d$ in $t_1,\ldots,t_n$. We need to bound the locus of points on $U$ on which $f$ and all derivatives $\frac{\partial f}{\partial t_i}$ simultaneously vanish. Using Poonen's technique \cite[Lemma 2.6]{P04}, we decompose $f$ to decouple the vanishing of $f$ from the vanishing of each derivative. Namely, if we choose $f_0\in\mathcal{S}_{n,d}$, $g_1,\ldots,g_c\in\mathcal{S}_{n,\lfloor (d-1)/p\rfloor}$, and $h\in\mathcal{S}_{n,\lfloor d/p\rfloor}$ each uniformly at random, then 
    \begin{equation}\label{eq: f decomp}
        f\colonequals f_0+g_1^pt_1+\cdots +g_c^pt_c+h^p
    \end{equation}
    will be distributed uniformly in $\mathcal{S}_{n,d}$, whereas the derivative with respect to $t_i$ for $1\leq i\leq c$ depends only on $f_0$ and $g_i$ because we are working over a field of characteristic $p$.
    
    For $i\in \{0,1,\ldots,c\}$, define
    \[W_i\colonequals U\cap\left\{\frac{\partial f}{\partial t_1}=\cdots=\frac{\partial f}{\partial t_i}=0\right\}.\]
    Let $0\leq i\leq c-1$, and suppose that we have already chosen $f_0,g_1,\ldots,g_i$ so as to ensure $\dim(W_i)= b-i$. Let $V_1,\ldots,V_\ell$ be the reduced loci of the $(b-i)$-dimensional irreducible components of $W_i$. Note that $\ell\leq \deg(\overline{U})(d-1)^i$ by B\'ezout's theorem.
    Now select $g_{i+1}\in\mathcal{S}_{n,\lfloor (d-1)/p\rfloor}$ uniformly at random. Fix $1\leq j \leq \ell$. We will bound the probability for which
    $$\frac{\partial f}{\partial t_{i+1}}=\frac{\partial f_0}{\partial t_{i+1}}+g_{i+1}^p$$
    vanishes on $V_j$. If no such $g_{i+1}$ exists then the probability is $0$. Otherwise, let $\gamma$ be a $g_{i+1}$ for which $\frac{\partial f}{\partial t_{i+1}}$ vanishes on $V_j$. Then every $g_{i+1}\in\mathcal{S}_{n,\lfloor (d-1)/p\rfloor}$ can be written $g_{i+1}=\gamma+\varepsilon$ for a uniquely determined $\varepsilon\in \mathcal{S}_{n,\lfloor (d-1)/p\rfloor}$. Now
    $$\frac{\partial f}{\partial t_{i+1}}=\frac{\partial f_0}{\partial t_{i+1}}+\gamma^p+\varepsilon^p,$$
    which equals $\varepsilon^p$ on $V_j$. Since $V_j$ is reduced, $\frac{\partial f}{\partial t_{i+1}}$ vanishes on $V_j$ if and only if $\varepsilon$ vanishes on $V_j$, which by Lemma~\ref{lem:vanish on Y} occurs with probability at most $q^{-\binom{\lfloor (d-1)/p\rfloor+b-i}{b-i}}$. Thus, the proportion of $g_{i+1}$ for which $\partial f/\partial t_{i+1}$ vanishes on at least one component among $V_1,\ldots,V_\ell$ is at most
    $$\ell q^{-\binom{\lfloor (d-1)/p\rfloor+b-i}{b-i}}\leq \deg(\overline{U})(d-1)^i q^{-\binom{\lfloor (d-1)/p\rfloor+b-i}{b-i}}.$$
   Provided that we avoid all these choices of $g_{i+1}$, we have $\dim(W_{i+1})=b-i-1$ and may continue the induction.

    Finally, suppose $f_0,g_1,\ldots,g_c$ have all been chosen in such a way that $\dim(W_c)= b-c$. Now for uniformly selected $h\in\mathcal{S}_{n,\lfloor d/p\rfloor}$, the probability that $f$ vanishes on $U$ is at most $q^{-\binom{\lfloor d/p\rfloor+b}{b}}$ by an argument analogous to the previous paragraph; recall that $U$ is smooth and therefore reduced. So, the probability that $H_f\cap U$ is $b$-dimensional, or that $(H_f\cap U)_{\text{sing}}\subseteq W_c$ has dimension greater than $b-c$, is at most the sum of all the probabilities computed so far, which is 
    \[q^{-\binom{\lfloor d/p\rfloor +b}{b}}+\deg(\overline{U})\sum_{i=0}^{c-1}(d-1)^iq^{-\binom{\lfloor (d-1)/p\rfloor+b-i}{b-i}}.\qedhere\]
\end{proof}

Next, we use Lemma~\ref{lem: U cap Hf} to deduce the following corollary.

\begin{corollary}\label{cor: singular bound}
    Let $1\leq k\leq \frac{n-1}{2}$. Pick $f_1,\ldots, f_k\in\mathcal{S}_{n,d}$ uniformly at random. The probability that $H_{f_1}\cap \cdots\cap H_{f_k}$ has dimension larger than $n-k$, or has singular locus of dimension larger than $n-2k-1$, is bounded above by 
    \begin{equation}\label{eq:big bound}
        (n+1)\sum_{j=0}^{k-1} \binom{n}{j}\left(q^{-\binom{\lfloor d/p\rfloor +n-j}{n-j}}+d^j\sum_{i=0}^{2k-j}(d-1)^iq^{-\binom{\lfloor (d-1)/p\rfloor+n-j-i}{n-j-i}}\right).
    \end{equation}
\end{corollary}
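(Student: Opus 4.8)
The plan is to deduce Corollary~\ref{cor: singular bound} from Lemma~\ref{lem: U cap Hf} by an inductive ``slicing'' argument over an affine cover of $\bP^n$, peeling off one hypersurface at a time. First I would fix a standard affine cover of $\bP^n$ by $n+1$ copies of $\bA^n$; on each chart, the ambient coordinate functions $t_1,\ldots,t_n$ will serve as candidate local coordinates. The issue is that $dt_1,\ldots,dt_n$ need not freely generate $\Omega$ on all of a given intermediate variety, so one must further subdivide: after intersecting with $j$ hypersurfaces a generic point of the resulting (expected $(n-j)$-dimensional) variety is smooth and has $n-j$ of the $t_i$'s as local coordinates, but \emph{which} $n-j$ depends on the point. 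This is where the $\binom{n}{j}$ factor comes from — we cover the smooth locus by the $\binom{n}{j}$ open sets on which a chosen subset of the coordinates works — and the leading $(n+1)$ comes from the choice of affine chart.

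The induction itself runs as follows. Suppose $f_1,\ldots,f_{k}$ are chosen one at a time. After choosing $f_1,\ldots,f_j$, restrict attention to an irreducible component $X_j$ of $H_{f_1}\cap\cdots\cap H_{f_j}$ of the expected dimension $n-j$ whose singular locus is small — the ``bad'' event at stage $j$ is that some component has dimension $>n-j$ or that the singular locus jumps. On the good part, pick one of the $\binom{n}{j}$ coordinate subsets realizing local coordinates on a smooth affine open $U\subseteq X_j$ and apply Lemma~\ref{lem: U cap Hf} with $m=n-j$ and an appropriate choice of $c$ (one checks that to keep $\dim(\cdot)_{\text{sing}}\leq n-2k-1$ throughout all $k$ steps one wants $c=2k-j+1$, which is why the inner sum runs $i=0,\ldots,2k-j$). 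The degree $\deg(\overline U)$ appearing in that lemma is bounded by $d^j$ via B\'ezout, since $X_j$ is cut out by $j$ degree-$d$ hypersurfaces; substituting $d^j$ for $\deg(\overline U)$ and $n-j$ for $m$ into the bound of Lemma~\ref{lem: U cap Hf} produces exactly the $j$-th summand of~\eqref{eq:big bound}. Summing the failure probabilities over $j=0,\ldots,k-1$, over the $\binom{n}{j}$ coordinate choices, and over the $n+1$ charts, and using a union bound throughout, yields~\eqref{eq:big bound}.

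The main obstacle is the bookkeeping that makes the union bound legitimate: one must argue that if the ``good'' event holds at every stage $j<k$, then $H_{f_1}\cap\cdots\cap H_{f_k}$ genuinely has dimension $n-k$ and singular locus of dimension $\leq n-2k-1$, so that the only way the conclusion of the corollary fails is if some stage is bad, and the probability of \emph{that} is controlled by Lemma~\ref{lem: U cap Hf} applied to whichever components survived. One has to be slightly careful because the singular locus of $X_{j+1}=X_j\cap H_{f_{j+1}}$ is not simply $(X_j)_{\text{sing}}\cup(X_j\cap(H_{f_{j+1}})_{\text{sing}})$; rather, on the smooth locus of $X_j$ the singular points of the slice are exactly the points where $f_{j+1}$ and all its derivatives along $X_j$ vanish, which is precisely what Lemma~\ref{lem: U cap Hf} bounds, while the singular points coming from $(X_j)_{\text{sing}}$ were already accounted for at an earlier stage. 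Keeping track of the dimension drop ($\leq 1$ per slice on the singular locus, exactly $1$ generically on the variety) across all $k$ steps, and verifying that the choice $c=2k-j+1$ at stage $j$ is both $\geq 1$ and $\leq m=n-j$ (which needs $k\leq\frac{n-1}{2}$), is the delicate part; the probability estimates themselves are then a direct substitution into Lemma~\ref{lem: U cap Hf}.
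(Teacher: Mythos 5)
Your proposal matches the paper's proof essentially step for step: the induction over $j$ with the ``good'' event of dimension $n-j$ and singular locus of dimension at most $n-2k-1$, the use of Lemma~\ref{lem:geom red singular} implicitly to guarantee geometric irreducibility, the cover by $n+1$ affine charts and $\binom{n}{j}$ coordinate subsets giving the open sets on which Lemma~\ref{lem: U cap Hf} applies, the choice $c=2k+1-j$ with the check $c\leq m$ forced by $k\leq\frac{n-1}{2}$, the B\'ezout bound $\deg(\overline U)\leq d^j$, and the final union bound. The one bookkeeping nuance you gesture at but could state more precisely is why replacing $X_j$ by its smooth locus is harmless: removing a subvariety of dimension $\leq n-2k-1$ cannot change whether $X_{j+1}$ has dimension $>n-j-1$ (since $n-j-1\geq n-2k$) or whether $(X_{j+1})_{\text{sing}}$ has dimension $>n-2k-1$, which is exactly the justification the paper gives.
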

\begin{proof}    
    For $n,k$ as above and $0\leq j\leq k$, say a variety is ``$j$-bad'' if it has dimension larger than $n-j$ or if it has singular locus of dimension larger than $n-2k-1$. Let $0\leq j\leq k-1$, and suppose that we are given $X_j=H_{f_1}\cap \cdots \cap H_{f_j}$ (or $X_0=\bP^n$ if $j=0$) which is \emph{not} $j$-bad. Note that such $X_j$ has dimension equal to $n-j$, is geometrically irreducible by Lemma~\ref{lem:geom red singular}, and has degree $d^j$. We will now choose $f_{j+1}\in\mathcal{S}_{n,d}$ uniformly at random. Since $X_j$ is not $j$-bad, its singular locus has dimension at most $n-2k-1$. Suppose that $X_j\cap H_{f_{j+1}}$ is $(j+1)$-bad. Then the same is true upon removing a subvariety of dimension at most $n-2k-1$, so $(X_j)_{\text{smooth}}\cap H_{f_{j+1}}$ is also $(j+1)$-bad; furthermore, if we have an open cover of $(X_j)_{\text{smooth}}$, then there exists some $U$ in the open cover for which $U\cap H_{f_{j+1}}$ is $(j+1)$-bad. This means that we can compute the probability that $U\cap H_{f_{j+1}}$ is $(j+1)$-bad for each $U$ separately, and add the results together to obtain an upper bound on the probability that $X_j\cap H_{f_{j+1}}$ is $(j+1)$-bad.

    Pick a standard affine open $\bA^n$ in $\bP^n$ and let $Y$ be the restriction of $(X_j)_{\text{smooth}}$ to $\bA^n$. Now choose a subset $S$ of $\{1,\ldots,n\}$ with $b:=n-j$ elements, and let $U_S\subseteq Y$ be the open subvariety on which $dt_i$ for $i\in S$ freely generate $\Omega_{U_S/\F_q}$. Applying Lemma~\ref{lem: U cap Hf} with $b=n-j$ and $c=2k+1-j$ (note that $c\leq b$ since $k\leq \frac{n-1}{2}$, and $b-c=n-2k-1$), we find that the probability that $U_S\cap H_{f_{j+1}}$ is $(j+1)$-bad is bounded above by 
    \[q^{-\binom{\lfloor d/p\rfloor +n-j}{n-j}}+d^j\sum_{i=0}^{2k-j}(d-1)^iq^{-\binom{\lfloor (d-1)/p\rfloor+n-j-i}{n-j-i}}.\]
    Since the sets $U_S$ cover $Y$, and there are $n+1$ choices for standard affine open, we can multiply this by $(n+1)\binom{n}{j}$ for an upper bound on the probability that $X_{j+1}$ is $(j+1)$-bad. If we avoid this event, we have a variety $X_{j+1}\colonequals X_j\cap H_{f_{j+1}}$ which is not $(j+1)$-bad and can continue the induction.

    In conclusion, the probability that $H_{f_1}\cap \cdots\cap H_{f_k}$ is $k$-bad can be bounded above by adding the probabilities we obtained for each $j=0,\ldots,k-1$, resulting in the upper bound from the statement of the corollary.
\end{proof}

\begin{remark}\label{rem:difference}
    Lemma~\ref{lem: U cap Hf} and Corollary~\ref{cor: singular bound} closely parallel \cite[Lemma 2.6]{BK12} and \cite[Corollary 2.7]{BK12}, respectively. In contrast, Bucur and Kedlaya \cite{BK12} obtained bounds in a considerably simplified form. These weaker bounds are sufficient for their purposes, as they were primarily interested in the asymptotics for large $d$. If we were to relax the bounds in a similar fashion, then we would obtain considerably worse bounds in Corollary \ref{cor:1 over d3} below, and checking the remaining exceptional cases would be computationally infeasible. 
\end{remark}

Now we specialize to the case $q=p=2$. We use the geometric computations above to obtain the desired bound on the probability that an intersection of hypersurfaces fails to be geometrically irreducible of the expected dimension.

\begin{corollary}\label{cor:1 over d3}
    Assume that $d\geq 3$ and $n\geq 36$, or $d\geq 5$ and $n\geq 13$, or $d\geq 7$ and $n\geq 11$, or $d\geq 9$ and $n\geq 10$. Let $k \in \{1,2,3\}$. If $X/\F_2$ is an intersection of $k$ elements of $\mathcal{S}_{n,d}$ selected uniformly at random, then the probability that $X$ fails to be a geometrically irreducible variety of dimension $n-k$ is less than $\frac{6}{5d^3}$.
\end{corollary}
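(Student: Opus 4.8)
The plan is to invoke Corollary~\ref{cor: singular bound} together with Lemma~\ref{lem:geom red singular} in the case $p=q=2$, and then bound the resulting explicit sum \eqref{eq:big bound} below $\frac{6}{5d^3}$ using the calculus lemma (Lemma~\ref{lem: poly exp}). First, observe that for $k\in\{1,2,3\}$ and the stated ranges of $(n,d)$ we have $k\leq\frac{n-1}{2}$, so Corollary~\ref{cor: singular bound} applies: with probability at least $1$ minus the quantity \eqref{eq:big bound} (specialized to $p=2$), the intersection $X=H_{f_1}\cap\cdots\cap H_{f_k}$ has dimension exactly $n-k$ and singular locus of dimension at most $n-2k-1$. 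Since $X$ is then a complete intersection of dimension $n-k$ with $k\leq n/2$ (using $n\geq 10>2k$), Lemma~\ref{lem:geom red singular} forces $X$ to be geometrically irreducible, because a geometrically reducible such $X$ would have singular locus of dimension at least $n-2k>n-2k-1$. Hence the probability that $X$ fails to be geometrically irreducible of dimension $n-k$ is at most \eqref{eq:big bound} with $p=2$.

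Second, I would simplify \eqref{eq:big bound}. Setting $p=2$ and noting $k\leq 3$, the outer sum runs over $j=0,1,2$ and the inner sum over $i=0,\ldots,2k-j\leq 6$. Crudely bounding $\binom{n}{j}\leq n^j$, $d^j(d-1)^i\leq d^{j+i}\leq d^{8}$, and replacing each binomial exponent $\binom{\lfloor d/2\rfloor+n-j}{n-j}$ and $\binom{\lfloor(d-1)/2\rfloor+n-j-i}{n-j-i}$ by its value—the dominant (smallest) exponent occurring at the largest index subtracted, namely $j=2$, $i=2k-j\leq 4$—one sees that every term is of the form (polynomial in $n,d$) times $2^{-(\text{binomial exponent})}$, where the binomial exponent grows at least quadratically in both $n$ and $d$. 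The worst term is roughly $(n+1)n^2 d^8\cdot 2^{-\binom{\lfloor(d-1)/2\rfloor+n-6}{n-6}}$ for $k=3$. Since $\binom{\lfloor(d-1)/2\rfloor+n-6}{n-6}\geq\lfloor(d-1)/2\rfloor+n-6$ and in fact grows much faster when both are large, this is of the form \eqref{eq:poly exp form} in the variables $(2,n)$ (or, after fixing small $n$, in $(2,d)$), so Lemma~\ref{lem: poly exp} reduces the claim $\eqref{eq:big bound}<\frac{6}{5d^3}$ to a finite check.

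Third, to carry out that finite check efficiently I would organize the verification along the four regimes listed in the hypothesis—$(d\geq3,n\geq36)$, $(d\geq5,n\geq13)$, $(d\geq7,n\geq11)$, $(d\geq9,n\geq10)$—exactly matching the structure one expects from monotonicity: for each fixed small $n$ in $\{10,11,12,13\}$, the exponents are large enough (because $d$ is correspondingly large) that the sum is already far below $\frac{6}{5d^3}$; and for $n\geq36$ with $d\geq3$, even the smallest binomial exponents, which are of size at least $\binom{1+n-6}{n-6}=n-5\geq31$ for the $k=3,j=2,i=4$ term, dominate the polynomial prefactors and the extra $d^3$ slack. In each regime Lemma~\ref{lem: poly exp} (applied with $q_0=2$) turns the inequality into finitely many evaluations at small $n$ (resp.\ small $d$), which I would tabulate.

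The main obstacle is the bookkeeping in the middle step: one must be careful that the simplification $\binom{a+b}{b}\geq$ (something manageable) is not so lossy that the inequality fails in the genuinely tight corner cases—precisely the points $(n,d)=(36,3)$, $(13,5)$, $(11,7)$, $(10,9)$ at the boundary of the hypothesis—where the exponent in the worst term is only moderately large (around $30$–$40$) while the prefactor $(n+1)n^2 d^8$ can be on the order of $10^7$ or larger. Getting a clean argument here likely requires keeping the binomial coefficients $\binom{\lfloor(d-1)/2\rfloor+n-j-i}{n-j-i}$ rather than downgrading them to linear bounds, at least for the handful of dominant terms, and then appealing to Lemma~\ref{lem: poly exp} with these exact (non-polynomial but still eventually-dominant) exponents—or, alternatively, splitting off those finitely many boundary values of $(n,d)$ and checking them by direct numerical evaluation of \eqref{eq:big bound}. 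I expect the cleanest writeup fixes the four regimes, in each regime bounds \eqref{eq:big bound} by a concrete expression of the form \eqref{eq:poly exp form}, and then cites Lemma~\ref{lem: poly exp} plus a short table of base-case evaluations.
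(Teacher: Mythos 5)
Your first paragraph is correct and matches the paper's opening step exactly: under any of the stated hypotheses $k\leq\frac{n-1}{2}$, so Corollary~\ref{cor: singular bound} applies, and combined with Lemma~\ref{lem:geom red singular} (which needs only $k\leq n/2$) it follows that the probability that $X$ fails to be geometrically irreducible of dimension $n-k$ is at most the expression~\eqref{eq:big bound} with $p=2$. After that, however, your proposal stops being a proof and becomes a plan, and the specific crude bounds you float would not survive contact with the boundary cases.

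Concretely, at $(n,d,k)=(36,3,3)$ the dominant term of~\eqref{eq:big bound} (with $d^3$ multiplied in) is the $j=2$, $i=4$ term, which has exponent $\binom{1+30}{30}=31$ and prefactor $(2u)^3(n+1)\binom{n}{2}(2u)^2(2u-1)^4\approx 1.9\times 10^9$; that contributes roughly $0.9$ and the full sum hovers just below $\tfrac65$. Replacing $\binom{n}{2}$ by $n^2$ (a factor $\approx 2$) and $(2u)^j(2u-1)^i$ by $d^{i+j}$ or $d^8$ (another factor of $3$ or more, and note $i+j\leq 6$ here, not $8$) overshoots $\tfrac65$ by a wide margin, and linearizing the binomial exponent as you suggest is similarly lossy once $d$ grows. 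You flag precisely this danger and propose the right fix, namely keeping the binomial-coefficient exponents and treating them as polynomials in the variable being varied, which is what the paper actually does. What you do not supply, and what the paper does, is the parametrization $d=2u$ or $d=2u-1$ that cleans up the floor functions and lets one bound~\eqref{eq:big bound} by a single function of $(u,n,k)$; the case split $u\in\{2,3,4\}$, where Lemma~\ref{lem: poly exp} is applied with $t=n$; and the two-stage argument for $u\geq 5$ (first fix $u=5$ and vary $n$, then fix each $n\geq 10$ and vary $u$, verifying the derivative condition of Lemma~\ref{lem: poly exp} via $g'(u)\geq\binom{u-1+n-\ell-1}{n-\ell-1}>4$). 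Without that machinery the "finite check" you gesture at is not actually finite in $d$, and the monotonicity claim behind the four regimes remains unproved. In short: right strategy, correct reduction, but the quantitative core — which is the entire content of the statement — is left as an acknowledged gap.
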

\begin{proof}
Note that $k\leq \frac{n-1}{2}$ under any of the assumptions listed in the statement. By Lemma~\ref{lem:geom red singular}, if $X$ fails to be a geometrically irreducible variety of dimension $n-k$, then either $X$ has dimension larger than $n-k$, or $X$ has singular locus of dimension larger than $n-2k-1$. Thus, we can apply Corollary~\ref{cor: singular bound} to deduce that the desired probability is bounded above by the expression~\eqref{eq:big bound}.

    If we multiply the expression~\eqref{eq:big bound} by $d^3$, it suffices to show that the resulting expression is bounded above by $\frac 65$. Write $d=2u$ if $d$ is even, and $d=2u-1$ if $d$ is odd. In either case, the resulting expression is bounded above by
    \begin{align*}
        (2u)^3(n+1)\sum_{j=0}^{k-1} \binom{n}{j}\left(2^{-\binom{u-1+n-j}{n-j}}+(2u)^{j}\sum_{i=0}^{2k-j}(2u-1)^i2^{-\binom{u-1+n-j-i}{n-j-i}}\right).
    \end{align*}
    For fixed $u\in \{2,3,4\}$ and $k\in\{1,2,3\}$, this expression is a sum where each term is a polynomial in $n$ times $2$ to the power of a polynomial in $n$. We can use Lemma~\ref{lem: poly exp} with the variable $t=n$ to determine that this expression is smaller than $\frac{6}{5}$ if $u=2$ and $n\geq 36$, or if $u=3$ and $n\geq 13$, or if $u=4$ and $n\geq 11$. This establishes the desired result for $d\in \{3, 4, 5, 6, 7, 8\}$.  

    To handle the remaining case $d\geq 9$ (or equivalently, $u\geq 5$), we obtain a slightly weaker upper bound by replacing $2u-1$ in the expression above with $2u$, that is,
    \begin{align}\label{eq:weakened bound}
        (n+1)\sum_{j=0}^{k-1} \binom{n}{j}\left((2u)^{3}2^{-\binom{u-1+n-j}{n-j}}+\sum_{i=0}^{2k-j}(2u)^{i+j+3}2^{-\binom{u-1+n-j-i}{n-j-i}}\right).
    \end{align}
    We will prove that the expression~\eqref{eq:weakened bound} is less than $\frac 65$ for all $u\geq 5$ and $n\geq 10$. We can check this for $u=5$ and $n\geq 10$ as above using Lemma~\ref{lem: poly exp} with the variable $t=n$. 
    
    Now fixing any $n\geq 10$, we will apply Lemma~\ref{lem: poly exp} again, this time with variable $t=u$ and with $t_0=z=5$. The first condition of Lemma~\ref{lem: poly exp} is easy to check and the third condition follows from the previous paragraph. Checking the second condition requires more work because of the dependence on $n$. To this end, let $0\leq \ell'\leq\ell \leq 6$ and set $f(u)=(2u)^{\ell'+3}$ and $g(u)=\binom{u-1+n-\ell}{n-\ell}$. Up to constant multiples depending on $n$, every term in (\ref{eq:weakened bound}) has the form $C f(u)2^{-g(u)}$ for some $\ell',\ell$ and some positive constant $C$. 
    We have 
    \[\frac{f'(u)}{f(u)} = \frac{\ell'+3}{u}<2\]
    for $u\geq 5$. On the other hand, $g(u)$ is convex for $u\geq 4$, so 
    \begin{align*}
        g'(u)\geq g(u)-g(u-1)=\binom{u-1+n-\ell-1}{n-\ell-1},
    \end{align*}
    which is greater than $4$ for $u\geq 5$. Thus $f(u)g'(u)\log 2>2f(u)>f'(u)$ for all $u\geq 5$, verifying the second condition.
\end{proof}

\subsection{Inclusion-exclusion}

We continue to assume $q=2$ and $r=m$, so we must bound the number of points on the union of all degree $d$ hypersurfaces over $\F_2$. For any fixed $n$, we can prove the theorem for sufficiently large $d$ by Theorem \ref{thm:q 2 case}, and for all remaining values of $d$ by finite computation. So, without loss of generality, we can take $n$ to be large; in particular, from now on assume $n\geq 10$.

Our main inequality in this section is a variant of the inclusion-exclusion principle. Given a subspace $L\subseteq \mathcal{S}_{n,d}$, let $X_L\colonequals\bigcap_{f\in L}H_f$. We define the following collections of subspaces of $\mathcal{S}_{n,d}$.
\begin{align*}
    \mathcal{L}_1&\colonequals\left\{L\subseteq \mathcal{S}_{n,d} \ | \ \dim_{\F_q}L=1\right\},\\
    \mathcal{L}_2&\colonequals\left\{L\subseteq \mathcal{S}_{n,d} \ | \ \dim_{\F_q}L=2, \ \dim X_L=n-2, \ X_L \text{ geometrically irreducible}\right\},\\
    \mathcal{L}_3&\colonequals\left\{L\subseteq \mathcal{S}_{n,d} \ | \ \dim_{\F_q}L=3, \ \text{there exists }L'\in\mathcal{L}_2 \text{ with }L'\subseteq L\right\}.
\end{align*}
\begin{lemma}\label{lem: inclusion exclusion}
    \begin{align*}
    \#\left(\bigcup_{f\in\mathcal{S}_{n,d}\setminus \{0\}} H_f(\FF_{q^r})\right)&\leq\sum_{L\in\mathcal{L}_1} \#X_L(\FF_{q^r})-q\sum_{L\in\mathcal{L}_2} \#X_L(\FF_{q^r})\\
    &\qquad +(q^3+q^2+q)\sum_{L\in\mathcal{L}_3} \#X_L(\FF_{q^r}).
\end{align*}
\end{lemma}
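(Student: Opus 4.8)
The plan is to prove a pointwise inequality: for each point $P\in\bP^n(\FF_{q^r})$, the number of subspaces in $\mathcal{L}_1$ whose $X_L$ contains $P$, minus $q$ times the number in $\mathcal{L}_2$, plus $(q^3+q^2+q)$ times the number in $\mathcal{L}_3$, is at least $1$ if $P$ lies on some hypersurface $H_f$, and is $\geq 0$ otherwise. Summing this inequality over all $P$ then yields the lemma, since $\#\left(\bigcup_{f}H_f(\FF_{q^r})\right)=\sum_P \mathbbm{1}[P\in\bigcup_f H_f]$ and each term $\sum_{L\in\mathcal{L}_i}\#X_L(\FF_{q^r})$ equals $\sum_P \#\{L\in\mathcal{L}_i : P\in X_L\}$.

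So fix $P$ and let $W=W_P\colonequals\{f\in\mathcal{S}_{n,d}\,|\,f(P)=0\}$, a linear subspace; note $P\in X_L$ exactly when $L\subseteq W$. First I would dispose of the easy case: if $P$ lies on no hypersurface then $W=\{0\}$, every count is zero, and the inequality $0\geq 0$ holds. Otherwise $W$ has some dimension $w\geq 1$. Write $a$ for the number of lines in $W$ (i.e.\ $\#\mathcal{L}_1$-members inside $W$), $b$ for the number of planes $L'\in\mathcal{L}_2$ with $L'\subseteq W$, and $c$ for the number of $3$-spaces $L\in\mathcal{L}_3$ with $L\subseteq W$. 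We have $a=\frac{q^w-1}{q-1}$. The contribution of $P$ to the right-hand side of the claimed inequality is $a-qb+(q^3+q^2+q)c$, and we must show this is $\geq 1$.

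The key combinatorial step is to relate $b$ and $c$. Since every $L\in\mathcal{L}_3$ contains at least one $L'\in\mathcal{L}_2$, and each plane $L'$ is contained in exactly $\frac{q^{w-2}-1}{q-1}$ distinct $3$-spaces inside $W$ (when $w\geq 3$), a union bound gives $c\geq b\cdot\frac{q^{w-2}-1}{q-1} / \binom{3\text{-spaces through a fixed plane, counted}}{}$... more precisely I would argue: the set of $3$-spaces $L\subseteq W$ containing \emph{some} $L'\in\mathcal{L}_2$ has size $c$, and since each such $L$ contains at most $\frac{q^3-1}{q-1}=q^2+q+1$ planes, while the planes in $\mathcal{L}_2\cap W$ number $b$ and each lies in $\frac{q^{w-2}-1}{q-1}$ $3$-spaces of $W$, we get $c\geq \frac{b}{q^2+q+1}\cdot\frac{q^{w-2}-1}{q-1}$ provided $w\geq 3$. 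Then $(q^3+q^2+q)c=q(q^2+q+1)c\geq qb\cdot\frac{q^{w-2}-1}{q-1}\geq qb$ as soon as $\frac{q^{w-2}-1}{q-1}\geq 1$, i.e.\ $w\geq 3$. Hence for $w\geq 3$ the quantity $a-qb+(q^3+q^2+q)c\geq a\geq \frac{q^3-1}{q-1}\geq 1$. For $w\in\{1,2\}$ there are no $3$-spaces inside $W$, so $c=0$; I must then check directly that $a\geq qb+1$. When $w=1$: $a=1$, $b=0$ (no planes fit), done. When $w=2$: $a=q+1$, and $b\leq 1$ since $W$ itself is the only plane inside $W$; so $a-qb\geq q+1-q=1$.

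The main obstacle I anticipate is making the "each plane lies in $\frac{q^{w-2}-1}{q-1}$ three-spaces of $W$" counting argument fully rigorous and choosing the cleanest double-counting identity — one must be careful that $\mathcal{L}_3$ is defined by \emph{existence} of a qualifying plane, so the inclusion-exclusion over which plane certifies membership needs a union bound rather than an exact count, and one should verify the inequality direction is preserved. A secondary check is the boundary cases $w=1,2$ where the $\mathcal{L}_3$ term vanishes, handled above. Everything else is elementary counting of subspaces of $\FF_q$-vector spaces.
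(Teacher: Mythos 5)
Your proof is correct and follows essentially the same strategy as the paper: a pointwise contribution analysis at each $P$, with the crux being the double-count of flags $L'\subseteq L$ with $L'\in\mathcal{L}_2$ and $L\in\mathcal{L}_3$ inside $W=\mathcal{S}_P$, giving $b\cdot\frac{q^{w-2}-1}{q-1}\leq c\cdot(q^2+q+1)$. The paper organizes the cases slightly differently (by whether $W$ contains an $\mathcal{L}_2$-subspace, then whether $W$ itself is in $\mathcal{L}_2$, then $\dim W\geq 3$) but the combinatorial content is identical.
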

\begin{proof}
    For each $P\in\bP^n(\FF_{q^r})$ that lies in some $H_f(\F_{q^r})$, we will show that it contributes at least $1$ to the right-hand side of the inequality.
    Let $\mathcal{S}_P$ denote the linear system of polynomials vanishing at $P$, so that $P$ is in $X_L(\F_{q^r})$ if and only if $L\subseteq \mathcal{S}_P$. 
    If $\mathcal{S}_P$ does not contain any subspace $L\in\mathcal{L}_2$, then $P$ is counted at least once by the sum over $\mathcal{L}_1$ and not at all by the remaining two sums. If $\mathcal{S}_P$ is an element of $\mathcal{L}_2$, then it has $q+1$ one-dimensional subspaces, so the contribution to the right-hand side from $P$ is $(q+1)-q(1)=1$.

    Now suppose $\mathcal{S}_P$ is $k$-dimensional for $k\geq 3$. Let $b_2$ and $b_3$ denote the number of subspaces of $\mathcal{S}_P$ in $\mathcal{L}_2$ and $\mathcal{L}_3$ respectively. 
    We count the number of flags $L_2\subseteq L_3\subseteq \mathcal{S}_P$ with $L_2\in\mathcal{L}_2$ and $L_3\in\mathcal{L}_3$ in two ways. On one hand, each $L\in\mathcal{L}_2$ is contained in exactly $\frac{q^{k-2}-1}{q-1}$ $3$-dimensional subspaces, and these are all in $\mathcal{L}_3$ by definition. On the other hand, each $L\in\mathcal{L}_3$ contains at most $\frac{q^3-1}{q-1}$ subspaces in $\mathcal{L}_2$. Therefore
    \[\frac{q^{k-2}-1}{q-1}b_2\leq \frac{q^3-1}{q-1}b_3.\]
    The total contribution of $P$ to the right-hand side is therefore
    \begin{align*}
        &\frac{q^k-1}{q-1}-qb_2+q\frac{q^3-1}{q-1}b_3\geq \frac{q^k-1}{q-1}+\left(\frac{q^{k-2}-1}{q-1}-1\right)qb_2\geq 1
    \end{align*}
    because $b_2\geq 0$ and $k\geq 3$.
\end{proof}

Now we are ready to prove the main result of the section.

\begin{theorem}\label{thm:inc-exc}
    Let $q=2$ and $r=m$. Then Theorem \ref{thm:main} holds in any of the following cases:
    \begin{itemize}
        \item $d\geq 3$ and $n\geq 36$;
        \item $d\geq 5$ and $n\geq 13$;
        \item $d\geq 7$ and $n\geq 11$;
        \item $d\geq 9$ and $n\geq 10$.
    \end{itemize}
\end{theorem}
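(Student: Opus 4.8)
The plan is to apply the inclusion-exclusion inequality of Lemma~\ref{lem: inclusion exclusion} with $q=2$ and $r=m$, and to bound each of the three sums on its right-hand side separately. Writing $N\colonequals\#\bP^n(\F_{2^m})=\frac{2^{m(n+1)}-1}{2^m-1}$, our goal is to show
\[
\sum_{L\in\mathcal{L}_1}\#X_L(\F_{2^m})-2\sum_{L\in\mathcal{L}_2}\#X_L(\F_{2^m})+14\sum_{L\in\mathcal{L}_3}\#X_L(\F_{2^m})< N,
\]
after which $\mu(2,n,d,m)>0$ follows immediately from Lemma~\ref{lem: inclusion exclusion} together with the definition of $\mu$. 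Under each of the four hypotheses on $(n,d)$ we have $n\geq 10$ and $k\leq\frac{n-1}{2}$ for $k\in\{1,2,3\}$, so Corollary~\ref{cor:1 over d3} applies: a uniformly random intersection of $k$ elements of $\mathcal{S}_{n,d}$ fails to be geometrically irreducible of dimension $n-k$ with probability less than $\frac{6}{5d^3}$. Since $\#\mathcal{S}_{n,d}=2^m$, the number of $1$-dimensional subspaces is $\#\mathcal{L}_1=\frac{2^m-1}{2-1}=2^m-1$; the number of ordered pairs of linearly independent forms is $(2^m-1)(2^m-2)$, and each $2$-dimensional subspace arises from $(2^2-1)(2^2-2)=6$ such pairs, so $\#\{L:\dim L=2\}=\frac{(2^m-1)(2^m-2)}{6}$, of which at most $\frac{6}{5d^3}$ of the associated intersections are bad; similarly $\#\mathcal{L}_3\leq\#\{L:\dim L=3\}=\frac{(2^m-1)(2^m-2)(2^m-4)}{(2^3-1)(2^3-2)(2^3-4)}=\frac{(2^m-1)(2^m-2)(2^m-4)}{168}$.

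The three sums are then estimated as follows. For $\mathcal{L}_1$: every $X_L$ with $L\in\mathcal{L}_1$ is a degree $d$ hypersurface, so by Lemma~\ref{lem:reducible bound} (Serre--S\o rensen) we have $\#X_L(\F_{2^m})\leq\frac{2^{mn}-1}{2^m-1}+(d-1)2^{m(n-1)}$, giving
\[
\sum_{L\in\mathcal{L}_1}\#X_L(\F_{2^m})\leq(2^m-1)\left(\frac{2^{mn}-1}{2^m-1}+(d-1)2^{m(n-1)}\right)=2^{mn}-1+(d-1)(2^m-1)2^{m(n-1)}.
\]
For $\mathcal{L}_2$: every $X_L$ with $L\in\mathcal{L}_2$ is geometrically irreducible of dimension $n-2$ and degree $\le d^2$, so provided $2^m>2(n-1)d^4$ (which holds comfortably under our hypotheses, since $m\geq\binom{n+3}{3}$ grows much faster than $\log_2(2(n-1)d^4)$ once $d$ is bounded below and $n$ is large; we verify this via Lemma~\ref{lem: poly exp}), Lemma~\ref{lem:LW not hypersurface} gives a two-sided bound, and in particular a \emph{lower} bound $\#X_L(\F_{2^m})\geq\frac{2^{m(n-1)}-1}{2^m-1}-\frac{2^{m(n-1)}\Delta(2^m,d^2)}{2^m-1}$. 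Since this sum enters with a minus sign, we use the lower bound, multiplied by $\#\mathcal{L}_2\geq\left(1-\frac{6}{5d^3}\right)\frac{(2^m-1)(2^m-2)}{6}$. For $\mathcal{L}_3$: each $X_L$ with $L\in\mathcal{L}_3$ contains an $X_{L'}$ with $L'\in\mathcal{L}_2$, hence is a variety (possibly reducible, equidimensional of dimension $n-2$ after passing to the component containing $X_{L'}$, or of smaller dimension) whose $\F_{2^m}$-points we bound crudely using Lemma~\ref{lem:reducible bound} with $\ell=n-2$ and degree $\le d^3$, giving $\#X_L(\F_{2^m})\leq\frac{2^{m(n-1)}-1}{2^m-1}+(d^3-1)\left(\frac{2^{m(n-1)}-1}{2^m-1}-\frac{2^{m(n-3)}-1}{2^m-1}\right)$; alternatively, and more efficiently, we split $\mathcal{L}_3$ into those $L$ for which $X_L$ is geometrically irreducible of dimension $n-3$ (bounded by Lemma~\ref{lem:LW not hypersurface}) and the at most $\frac{6}{5d^3}\cdot\frac{(2^m-1)(2^m-2)(2^m-4)}{168}$ exceptional ones (bounded by Lemma~\ref{lem:reducible bound} with $\ell=n-2$).

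Assembling these, the main term $\sum_{\mathcal{L}_1}\#X_L\approx 2^{mn}$ is cancelled against $2\sum_{\mathcal{L}_2}\#X_L\approx 2\cdot\frac{2^{2m}}{6}\cdot 2^{m(n-1)}=\frac{2^{m(n+1)}}{3}$; more precisely, after dividing through by $\frac{2^{m(n+1)}}{2^m-1}\approx N$, the inequality to prove takes the shape $f(2,m)<1$ for a function $f$ of the form~\eqref{eq:poly exp form} in which the dominant contributions are a constant strictly less than $1$ coming from the $\mathcal{L}_1$ and $\mathcal{L}_2$ terms, plus correction terms that are $O(d\cdot 2^{-m/2})$, $O(d^4 2^{-m/2})$ from the Lang--Weil error $\Delta$, and $O(d^{-3}\cdot 2^{m}\cdot 2^{-m}\cdot d^3\cdot\text{(point count)}/N)=O(1/d^0)$-type terms from the $\frac{6}{5d^3}$-proportion of bad subspaces in $\mathcal{L}_2$ and $\mathcal{L}_3$ --- here one checks that the $14\cdot\frac{6}{5d^3}$ factor multiplying the $\mathcal{L}_3$ contribution, together with the $d^3$-sized degree bound there, produces only a bounded contribution, and that the $2\cdot\frac{6}{5d^3}$ loss in the $\mathcal{L}_2$ term is negligible. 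Once this is arranged, Lemma~\ref{lem: poly exp} with variable $t=m$ (using $m=\binom{n+d}{d}\geq\binom{n+3}{3}$, which is large under each hypothesis) finishes the argument; the four separate hypotheses on $(n,d)$ are exactly what is needed for Corollary~\ref{cor:1 over d3} to apply and for the resulting polynomial-exponential inequality to hold. The main obstacle is the bookkeeping in the $\mathcal{L}_3$ sum: a naive bound on $\#X_L(\F_{2^m})$ for reducible $X_L$ loses a factor of roughly $d^3$, which must be beaten down by the $\frac{6}{5d^3}$ bound on the number of such $L$ that are not geometrically irreducible, and one has to be careful that the geometrically irreducible members of $\mathcal{L}_3$ — which dominate in number — contribute a term of size $\approx 14\cdot\frac{2^{3m}}{168}\cdot 2^{m(n-2)}=\frac{2^{m(n+1)}}{12}$ with the \emph{correct sign} so as not to overwhelm the cancellation; verifying that $1-\tfrac13+\tfrac1{12}<1$ with room to spare for the error terms is the crux of the numerical estimate.
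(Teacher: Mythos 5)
Your high-level plan is the same as the paper's: apply Lemma~\ref{lem: inclusion exclusion} at $q=2$, $r=m$, use Corollary~\ref{cor:1 over d3} to control the ``bad'' (geometrically reducible or wrong-dimensional) members of $\mathcal{L}_1,\mathcal{L}_2,\mathcal{L}_3$, and bound the three sums by Lang--Weil and Serre--S{\o}rensen type estimates. However, there are two concrete numerical failures in your estimates that make the argument not close.

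First, your bound on $\sum_{L\in\mathcal{L}_1}\#X_L(\F_{2^m})$ is far too crude. You apply Lemma~\ref{lem:reducible bound} uniformly to every degree-$d$ hypersurface, giving $2^{mn}-1+(d-1)(2^m-1)2^{m(n-1)}$, whose dominant term is $\approx (d-1)\,2^{mn}$. Since $N=\#\bP^n(\F_{2^m})\approx 2^{mn}$, this single sum already exceeds $(d-1)\,N$, which for $d\geq 3$ is $\geq 2N$; the subtraction of $2\sum_{\mathcal{L}_2}\#X_L\approx\frac13 N$ and addition of $14\sum_{\mathcal{L}_3}\#X_L\approx\frac{1}{12}N$ cannot compensate. (Your ``assembling'' paragraph claims $\sum_{\mathcal{L}_1}\#X_L\approx 2^{mn}$, but that contradicts the formula you derived a few lines earlier.) The fix, which is exactly what the paper does, is to split $\mathcal{L}_1$: for the $(1-t)$ proportion of irreducible hypersurfaces use Proposition~\ref{prop:LW}, whose error is only $\Delta(2^m,d)$, and reserve Serre--S{\o}rensen for the $t\leq\frac{6}{5d^3}$ proportion of reducible ones, so the $(d-1)$ factor is multiplied by $t$ and becomes $\frac{6(d-1)}{5d^3}=O(d^{-2})$.

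Second, your treatment of the bad members of $\mathcal{L}_3$ uses $\ell=n-2$ in Lemma~\ref{lem:reducible bound}. But if $L\in\mathcal{L}_3$ with $L'\subseteq L$, $L'\in\mathcal{L}_2$, then $X_{L'}$ is geometrically irreducible of dimension $n-2$, and intersecting with a third hypersurface not containing it drops the dimension to $n-3$; so $X_L$ is equidimensional of dimension $n-3$ and one must use $\ell=n-3$. This is not a cosmetic point: with $\ell=n-2$ the bound on $\#X_L$ has leading term $\approx d^3\cdot 2^{m(n-2)}$, and multiplying by the $14\cdot\frac{6}{5d^3}\cdot\frac{2^{3m}}{168}$ bad choices gives a contribution $\approx\frac{1}{10}N$, which combined with the other terms pushes the constant above $1$. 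With $\ell=n-3$ the leading term is $\approx 2^{m(n-2)}$ (no $d^3$ factor in the leading term), and the bad-$\mathcal{L}_3$ contribution becomes $O(d^{-3})\cdot\frac{1}{12}N$, which is negligible. Once both corrections are made, the paper's numerical target $\frac{49}{45}-\frac13\cdot\frac{43}{45}+\frac{1}{12}\cdot\frac{97}{45}\leq\frac{19}{20}$ (plus $\Delta$ error terms controlled by Lemma~\ref{lem: poly exp}) comes out below $1$ and the argument closes.
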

\begin{proof}
    The conditions on $n$ and $d$ are the same as those in Corollary~\ref{cor:1 over d3}. We will use Corollary~\ref{cor:1 over d3} to produce an upper bound on the right-hand side of    
    Lemma \ref{lem: inclusion exclusion}, and show that the upper bound is less than the total number of points in $\bP^n(\F_{q^m})$.
    
   For the first sum involving $\mathcal{L}_1$, we provide an upper bound by following the same strategy as in the proof of Proposition~\ref{prop:less than q}. The proportion $t$ of $f\in\mathcal{S}_{n,d}\setminus\{0\}$ defining reducible hypersurfaces is bounded by the proportion of $f\in\mathcal{S}_{n,d}$ for which $H_f$ is geometrically reducible or has dimension $n$; hence, Corollary~\ref{cor:1 over d3} with $k=1$ implies that $t\leq \frac{6}{5d^3}$. Using Lemma~\ref{lem:LW} for the 
   irreducible hypersurfaces and Lemma~\ref{lem:reducible bound} for the rest, we obtain the following upper bound on $\sum_{L\in\mathcal{L}_1} \#X_L(\F_{2^m})$:  \begin{align*}
     &(2^m-1) (1-t) \left[\frac{2^{mn}-1+2^{mn} \Delta(2^m, d)}{2^{m}-1} \right] + (2^m-1)t\left[\frac{2^{mn}-1}{2^m-1} + (d-1) 2^{m(n-1)} \right]\\
     &=(2^{mn}-1) + (1-t)2^{mn}\Delta(2^{m}, d) + (2^{m}-1) t (d-1) 2^{m(n-1)},
    \end{align*}
    where we canceled the two terms involving $t(2^{mn}-1)$. Since $t\leq \frac{6}{5d^3}$, we deduce that
     \begin{align}\label{eq:L1bound}
        \sum_{L\in\mathcal{L}_1} \#X_L(\F_{2^m})
        &\leq \frac{2^{m(n+1)}}{2^m-1}\left(1-2^{-m}+\frac{6(d-1)}{5d^3}+\Delta(2^m,d)\right).
    \end{align}

    For the second sum involving $\mathcal{L}_2$, we bound it from below. By Corollary~\ref{cor:1 over d3}, there are at most $\frac{6}{5d^3}2^{2m}$ pairs $(f_1,f_2)\in \mathcal{S}_{n,d}^2$ for which $H_{f_1}\cap H_{f_2}$ is geometrically reducible or has dimension greater than $n-2$. Each of the remaining pairs form a basis of some linear system in $\mathcal{L}_2$. Since each $2$-dimensional vector space over $\F_2$ has $6$ bases, we have
    \[\#\mathcal{L}_2\geq \frac{2^{2m}}6\left(1-\frac{6}{5d^3}\right).\]
    Next, we need a lower bound on the size of $X_L(\F_{2^m})$ for each $L\in\mathcal{L}_2$. Note that $X_L$ has dimension $n-2$ and degree $d^2$. Since  $2^m>2(n-1)d^4= 2(\dim X_L+1) (\deg X_L)^2$ holds for all $n,d\geq 2$, we apply Lemma~\ref{lem:LW not hypersurface} to the variety $X_L$ to obtain
    \begin{align}\label{eq:L2bound}
        \sum_{L\in\mathcal{L}_2} \#X_L(\F_{2^m})&\geq \frac{2^{m(n+1)}}{6(2^m-1)}\left(1-\frac{6}{5d^3}\right)\left(1-2^{-m(n-1)}-\Delta(2^m,d^2)\right).
    \end{align}
    
    For the third sum involving $\mathcal{L}_3$, we bound it from above. The size of $\mathcal{L}_3$ is bounded above by the total number of $3$-dimensional linear systems of hypersurfaces, which is
    \[\frac{(2^m-1)(2^m-2)(2^m-2^2)}{(2^3-1)(2^3-2)(2^3-2^2)}\leq \frac{2^{3m}}{168}.\] 
    For $L\in\mathcal{L}_3$, let $L'\in\mathcal{L}_2$ with $L'\subseteq L$. Then $X_{L'}$ is geometrically irreducible of dimension $n-2$, and $X_L$ is the intersection of this variety with a hypersurface that does not contain it; therefore $X_L$ has dimension $n-3$. Next, we bound the number of elements of $\mathcal{L}_3$ that define geometrically reducible varieties. By Corollary~\ref{cor:1 over d3}, there are at most $\frac{6}{5d^3}2^{3m}$ triples $(f_1,f_2,f_3)\in \mathcal{S}_{n,d}^3$ for which $H_{f_1}\cap H_{f_2}\cap H_{f_3}$ is geometrically reducible. The same upper bound holds after we exclude all the linearly dependent triples. Dividing by the number of bases for a $3$-dimensional space over $\F_2$, we have at most $\frac{1}{168}\cdot\frac{6}{5d^3}2^{3m}$ three-dimensional spaces $L$ for which $X_L$ is geometrically reducible.

    Note that $X_L$ has degree $d^3$. Since  $2^m>2(n-2)d^6= 2(\dim X_L+1) (\deg X_L)^2$ holds for all $n,d\geq 2$, 
    we can use Lemma~\ref{lem:LW not hypersurface} to bound the contributions from geometrically irreducible varieties and Lemma \ref{lem:reducible bound} to bound the contributions from those that are geometrically reducible:
    \begin{align}
        \sum_{L\in\mathcal{L}_3} \# X_L(\FF_{2^m}) &\leq \frac{2^{3m}}{168}\left(1-\frac{6}{5d^3}\right)\left(\frac{2^{m(n-2)}+2^{m(n-2)}\Delta(2^m,d^3)}{2^m-1}\right) \notag \\
        &\qquad +   \frac{2^{3m}}{168}\cdot\frac{6}{5d^3}\left(\frac{2^{m(n-2)}+(d^3-1)(2^{m(n-2)}+1)}{2^m-1}\right) \notag \\
        &\leq \frac{2^{m(n+1)}}{168(2^m-1)}\left(1+\frac{6(d^3-1)}{5d^3}(1+2^{-m(n-2)})+\Delta(2^m,d^3)\right).\label{eq:L3bound}
    \end{align}
    We now substitute the inequalities \eqref{eq:L1bound}, \eqref{eq:L2bound}, and \eqref{eq:L3bound} into Lemma~\ref{lem: inclusion exclusion}. Using the assumption $d\geq 3$, we obtain an upper bound for the number of $\F_{2^m}$-points on all hypersurfaces of degree $d$ defined over $\F_2$:
    \begin{align*}
    &\frac{2^{m(n+1)}}{2^m-1}\biggl(\left(1+\frac{6(d-1)}{5d^3}\right)-\frac26\left(1-\frac6{5d^3}\right)+\frac{14}{168}\left(1+\frac{6(d^3-1)}{5d^3}\right) \\
    &\qquad\qquad +\Delta(2^m,d)+\frac{2}{6}\left(1-\frac{6}{5d^3}\right)\Delta(2^m,d^2)+\frac{14}{168}\Delta(2^m,d^3)\\
    &\qquad\qquad -2^{-m}+\frac{2}{6}\left(1-\frac{6}{5d^3}\right)2^{-m(n-1)}+\frac{14}{168}\cdot \frac{6(d^3-1)}{5d^3}2^{-m(n-2)}\biggr)\\
    &\qquad\leq \frac{2^{m(n+1)}}{2^m-1}\left(\left(\frac{49}{45}-\frac13\cdot \frac{43}{45}+\frac{1}{12}\cdot\frac{97}{45}\right)+\frac{17}{12}\Delta(2^m,d^3)+2^{-m}\right) \\
    &\qquad\leq \frac{2^{m(n+1)}}{2^m-1}\left(\frac{19}{20}+\frac{17}{12}\Delta(2^m,m^3)+2^{-m}\right),
    \end{align*}
where in the last step we used $d\leq m$. 
By Lemma \ref{lem: poly exp} applied with the variable $t=m$,
we have
\[\frac{19}{20}+\frac{17}{12}\Delta(2^m,m^3)+2^{-m}<1-2^{-10}\]
as long as $m\geq 93$, which is guaranteed by our assumptions since $d\geq 3$ and $n\geq 10$. Since $m(n+1)>10$, we conclude that the total number of $\F_{2^m}$-points on hypersurfaces of degree $d$ defined over $\F_2$ is strictly less than $\frac{2^{m(n+1)}-1}{2^m-1}$. So provided the conditions of Corollary~\ref{cor:1 over d3} hold, Theorem~\ref{thm:main} also holds.
\end{proof}

At this point, we have completed the proof of Theorem~\ref{thm:main} for all but finitely many cases. The remaining cases can be computationally checked; see Appendix~\ref{sec:computer} for details.   

\section{Applications to linear families of hypersurfaces}\label{sec:linear-systems}

We recall Question \ref{quest:linear-systems}: given a property $\mathcal{P}$ of an algebraic hypersurface, what is the maximum (projective) dimension of a linear system $\mathcal{L}$ of hypersurfaces in $\mathbb{P}^n$ with degree $d$ such that every $\mathbb{F}_q$-member of $\mathcal{L}$ satisfies property $\mathcal{P}$? 

One condition $\mathcal{P}$ we may consider is the following: for any fixed $2\leq i\leq d$, we require that every $\mathbb{F}_q$-member of $\mathcal{L}$ has an $\mathbb{F}_q$-irreducible factor of degree $i$ or larger. We grant ourselves even more flexibility by introducing a condition that allows this condition to ``barely'' fail, by permitting specific irreducible factors of degree $i-1$.

Given a vector space $V$ over $\F_q$ we use $\bP(V)$ to denote its projectivization. For the rest of the section, we will consider linear systems of hypersurfaces as subsets of $\bP(\mathcal{S}_{n,d}(\mathbb{F}_q))$. For $G_1,\ldots,G_j\in \bP(\mathcal{S}_{n,d}(\mathbb{F}_q))$, we use the notation $\langle G_1,\ldots,G_j\rangle$ for the subspace of $\bP(\mathcal{S}_{n,d}(\mathbb{F}_q))$ spanned by $G_1,\ldots,G_j$.

\begin{definition}\label{def:prescribed-factors} Let $2\leq i\leq  d$ and $0\leq j\leq \binom{n+i-1}{n}-1$. A linear system $\mathcal{L}\subseteq \bP(\mathcal{S}_{n,d}(\mathbb{F}_q))$ has \emph{property $\mathcal{P}_{i, j}$} if there exist polynomials $G_1, G_2, \dots, G_{j}$, each with degree $i-1$, such that every $F\in \mathcal{L}$ satisfies:
\begin{enumerate}
\item\label{cond:prescribed-degree} $F$ has an $\mathbb{F}_q$-irreducible factor of degree at least $i$, or
\item\label{cond:prescribed-factor} $F$ has an $\mathbb{F}_q$-irreducible factor $G$ of degree $i-1$, where $G\in \langle G_1, \dots, G_{j} \rangle$.
\end{enumerate}
\end{definition}

The next result determines the maximum dimension of an $\mathbb{F}_q$-linear system that satisfies the property $\mathcal{P}_{i, j}$ for certain ranges of $i$ and $j$. 

\begin{theorem}\label{thm:system-prescribed-factors} Let $d\geq 2$ and suppose $2\leq i\leq  d$ and $0\leq j\leq n$. There exists an $\mathbb{F}_q$-linear system $\mathcal{L}\subseteq \bP(\mathcal{S}_{n,d}(\mathbb{F}_q))$ with (projective) dimension $\binom{n+d}{n}-\binom{n+i-1}{n}+(j-1)$ that satisfies the property $\mathcal{P}_{i, j}$. Moreover, the result is sharp:  $\dim(\mathcal{L})$ cannot be increased to $\binom{n+d}{n}-\binom{n+i-1}{n}+j$.
\end{theorem}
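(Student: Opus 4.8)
The plan is to deduce both halves from Theorem~\ref{thm:main}, applied in dimension $n$, with degree parameter $i-1$, and over an extension of carefully chosen degree. Throughout write $m=\binom{n+d}{n}=\dim_{\F_q}\mathcal{S}_{n,d}(\F_q)$, $m'=\binom{n+i-1}{n}=\dim_{\F_q}\mathcal{S}_{n,i-1}(\F_q)$, and $r\colonequals m'-j$; since $0\le j\le n$ and $m'\ge n+1$ (because $i\ge 2$), we have $1\le r\le m'$.

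\emph{Existence.} I would first apply Theorem~\ref{thm:main} with parameters $(n,i-1,r)$ to obtain a point $P\in\bP^n(\F_{q^r})$ for which $\mathcal{G}\colonequals\{G\in\mathcal{S}_{n,i-1}(\F_q):G(P)=0\}$ has dimension exactly $\max(m'-r,0)=j$. The key structural fact, used repeatedly, is that no nonzero form over $\F_q$ of degree $e\le i-2$ vanishes at $P$: otherwise, multiplying such a form by all monomials of degree $i-1-e\ge1$ would exhibit $\binom{n+i-1-e}{n}\ge n+1>j$ linearly independent elements of $\mathcal{G}$. I would then let $\mathcal{L}$ be the projectivization of $\mathcal{W}\colonequals\{F\in\mathcal{S}_{n,d}(\F_q):F(P)=0\}$. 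The standard codimension bound (as in the introduction) gives $\dim_{\F_q}\mathcal{W}\ge m-r$, and equality holds because evaluation at $P$ already carries $\mathcal{S}_{n,i-1}(\F_q)$ onto the $r$-dimensional $\F_q$-space $\F_{q^r}$ (its kernel being $\mathcal{G}$), hence carries $\mathcal{S}_{n,d}(\F_q)$ onto $\F_{q^r}$ via the map $G\mapsto\ell^{d-i+1}G$ for a linear form $\ell$ with $\ell(P)\ne0$; in any event one may pass to a subspace of the exact dimension. Thus $\mathcal{L}$ has projective dimension $m-r-1=m-m'+j-1$. Property $\mathcal{P}_{i,j}$ holds with witnesses any basis $G_1,\dots,G_j$ of $\mathcal{G}$: for nonzero $F\in\mathcal{W}$, some $\F_q$-irreducible factor $F_\ell$ of $F$ vanishes at $P$ (as $\F_{q^r}$ is a field), so $\deg F_\ell\ge i-1$ by the observation, and either $\deg F_\ell\ge i$ (condition~\eqref{cond:prescribed-degree}) or $\deg F_\ell=i-1$, forcing $F_\ell\in\mathcal{G}=\langle G_1,\dots,G_j\rangle$ (condition~\eqref{cond:prescribed-factor}).

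\emph{Sharpness.} Suppose $\mathcal{L}\subseteq\bP(\mathcal{S}_{n,d}(\F_q))$ has property $\mathcal{P}_{i,j}$ with witnesses $G_1,\dots,G_j$; put $\mathcal{G}=\langle G_1,\dots,G_j\rangle$, so $\dim_{\F_q}\mathcal{G}\le j$. I would exhibit an $r$-dimensional subspace $\mathcal{V}\subseteq\mathcal{S}_{n,d}(\F_q)$ none of whose nonzero elements satisfies $\mathcal{P}_{i,j}$; then $\mathcal{V}$ meets the affine cone over $\mathcal{L}$ only at $0$, so $(\dim\mathcal{L}+1)+r\le m$, i.e.\ $\dim\mathcal{L}\le m-m'+j-1$, which is the asserted bound. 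For $\mathcal{V}$ I would take $\ell^{d-i+1}\cdot\mathcal{U}$, where $\mathcal{U}\subseteq\mathcal{S}_{n,i-1}(\F_q)$ is any subspace of dimension $m'-j=r$ with $\mathcal{U}\cap\mathcal{G}=0$ (available since $\dim_{\F_q}\mathcal{G}\le j$) and $\ell$ is a linear form, taken outside $\mathcal{G}$ when $i=2$. Because $\F_q[x_0,\dots,x_n]$ is a domain, $\dim_{\F_q}\mathcal{V}=\dim_{\F_q}\mathcal{U}=r$. For $0\ne F=\ell^{d-i+1}G\in\mathcal{V}$ (so $0\ne G\in\mathcal{U}$), every $\F_q$-irreducible factor of $F$ is $\ell$ or a factor of $G$, hence has degree $\le i-1$, so~\eqref{cond:prescribed-degree} fails; and any $\F_q$-irreducible factor of $F$ of degree exactly $i-1$ is either $\ell$ (possible only when $i=2$, excluded by the choice of $\ell$) or a scalar multiple of $G$ with $G$ irreducible, and $G\notin\mathcal{G}$ since $G\in\mathcal{U}\setminus\{0\}$, so~\eqref{cond:prescribed-factor} fails as well.

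The steps requiring the most care are purely bookkeeping: confirming that $\dim\mathcal{L}$ in the construction is \emph{exactly} $m-m'+j-1$ — which is why I arrange for evaluation at $P$ to be literally surjective onto $\F_{q^r}$, or else just cut down to a subspace of the right size — and tracking the degenerate role of a linear factor when $i=2$ in the sharpness argument, precisely the reason $\ell$ must then be chosen outside $\mathcal{G}$. The remaining content is a dimension count together with unique factorization in $\F_q[x_0,\dots,x_n]$.
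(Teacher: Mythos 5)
Your proposal is correct and follows essentially the same route as the paper's proof: existence via Theorem~\ref{thm:main} applied with degree parameter $i-1$ and $r=\binom{n+i-1}{n}-j$, taking $\mathcal{L}$ to be the degree-$d$ forms vanishing at the resulting point $P$, using the ``no low-degree form vanishes at $P$'' observation; and sharpness by exhibiting, for any proposed witnesses $G_1,\dots,G_j$, a large subspace of forms of the shape $\ell^{d-i+1}\cdot(\text{degree }i-1)$ avoiding $\langle G_1,\dots,G_j\rangle$ that can satisfy neither condition. Two small streamlinings you make: in the existence argument you work directly with the irreducible factor of $F$ that actually vanishes at $P$ (which is cleaner than the paper's reference to the maximum-degree factor), and in the sharpness argument you construct the complement $\mathcal{U}$ of $\mathcal{G}$ inside $\mathcal{S}_{n,i-1}(\F_q)$ and multiply by $\ell^{d-i+1}$, rather than intersecting the ambient multiplication locus with an auxiliary codimension-$j$ space $\mathcal{A}$; the two constructions are equivalent, but yours makes the dimension count slightly more transparent.
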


Note that Theorem~\ref{thm:system-prescribed-degree} follows immediately by setting $j=0$. Indeed, condition~\eqref{cond:prescribed-factor} in Definition~\ref{def:prescribed-factors} is vacuous when $j=0$; that is, the property $\mathcal{P}_{i, 0}$ precisely stands for ``having an irreducible factor of degree at least $i$" in the framework of Question~\ref{quest:linear-systems}. Furthermore, applying Theorem~\ref{thm:system-prescribed-degree} with $i=d$ recovers \cite[Theorem 1.3]{AGR24} for all finite fields $\mathbb{F}_q$, including the case $q=2$. Note that \cite[Theorem 1.3]{AGR24} was stated with the hypothesis $q>2$ due to its dependence on Theorem~\ref{thm:AGR}.

\begin{proof} 
We will first prove the existence, then the sharpness. 

\medskip

\noindent\textbf{Existence.} By applying Theorem~\ref{thm:main} for hypersurfaces of degree $i-1$ with $r=\binom{n+i-1}{n}-j$, there exists a point $P\in\bP^n(\mathbb{F}_{q^r})$ such that the $\mathbb{F}_q$-vector space of all degree $i-1$ hypersurfaces passing through $P$ (and its Galois orbit) has dimension $j$. Let $G_{1}, G_{2}, \ldots, G_{j}$ be an $\mathbb{F}_q$-basis for this space. Consider the linear system $\mathcal{L}_0$ consisting of all hypersurfaces of degree $d$ passing through $P$ and its Galois orbit. Then
$$
\dim(\mathcal{L}_0) \geq \binom{n+d}{n} -\left(\binom{n+i-1}{n}-j\right)-1.
$$
Suppose $F$ is an $\mathbb{F}_q$-member of $\mathcal{L}_0$. Let $G$ be an $\mathbb{F}_q$-irreducible factor of $F$ with $G(P)=0$. If $\deg(G)\geq i$, then $F$ satisfies condition \eqref{cond:prescribed-degree}. Otherwise, $\deg(G)\leq i-1$. We claim that $\deg(G)=i-1$. If $\deg(G)\leq i-2$, then with $k=i-1-\deg(G)$, the polynomials
\[
x_0^k G, \ x_1^k G,\ \ldots,\ x_n^k G
\]
are $n+1$ linearly independent polynomials of degree $i-1$ vanishing at $P$; this contradicts the definition of $P$ and the hypothesis that $j\leq n$. Since $\deg(G)=i-1$, it follows that $G\in \langle G_1, G_2, ..., G_{j}\rangle$ and thus $F$ satisfies condition \eqref{cond:prescribed-factor}. Thus, we have produced a linear system $\mathcal{L}_0$ with (projective) dimension $\binom{n+d}{n}-\binom{n+i-1}{n}+(j-1)$ that satisfies $\mathcal{P}_{i, j}$. 

\medskip

\noindent\textbf{Sharpness.} 
Suppose $\mathcal{L}$ is a linear system with dimension at least $\binom{n+d}{n}-\binom{n+i-1}{n}+j$. We aim to show that $\mathcal{L}$ does \emph{not} satisfy the property $\mathcal{P}_{i, j}$. To this end, let $G_1,\ldots,G_j$ be an arbitrary collection of polynomials of degree $i-1$. Without loss of generality, assume $x_0$ is not in $\langle G_1,\ldots,G_{j}\rangle$; this only matters if $i=2$, in which case we can achieve this by re-indexing coordinates since $j\leq n<n+1$.

Let $\mathcal{A}\subseteq \mathbb{P}(\mathcal{S}_{n, d}(\mathbb{F}_q))$ be a codimension $j$ linear space defined over $\mathbb{F}_q$ that is disjoint from $$\mathbb{P}\langle x_0^{d-i+1} G_1, \ldots, x_{0}^{d-i+1} G_{j}\rangle \cong \mathbb{P}^{j-1}.$$ Consider the linear space $\mathcal{R}_{i, j}\subseteq \mathbb{P}(\mathcal{S}_{n, d}(\mathbb{F}_q))$ defined as the intersection 
$$
\mathbb{P}(\{x_0^{d-i+1} T \ | \ \deg(T)=i-1 \})\cap \mathcal{A}.
$$
 The (projective) dimension of $\mathcal{R}_{i, j}$ satisfies the lower bound
$$
\dim(\mathcal{R}_{i, j}) \geq  \binom{n+i-1}{n}-j-1.
$$
Since $\dim(\mathcal{L})+\dim(\mathcal{R}_{i, j})\geq \binom{n+d}{n}-1$, the two spaces meet in the parameter space $\bP(\mathcal{S}_{n,d}(\F_q))$
of degree $d$ hypersurfaces. Let $E\in \mathcal{L}\cap \mathcal{R}_{i, j}$. Then, $E=x_0^{d-i+1} T$ for some $T$ with $\deg(T)=i-1$, so $E$ does not satisfy condition~\eqref{cond:prescribed-degree}. 

We show that $E$ does not satisfy condition~\eqref{cond:prescribed-factor} either. Assume, to the contrary, that $E$ has an $\mathbb{F}_q$-irreducible factor $G$ belonging to the linear system $\langle G_1, \dots, G_{j}\rangle$. Then we can write $E = G\cdot H$, where $\deg(G)=i-1$ and $\deg(H)=d-i+1$. Since $G$ is irreducible over $\mathbb{F}_q$ and $G\neq \lambda\cdot x_0$ for any $\lambda\in\mathbb{F}_q$, it follows that $\gcd(x_0^{d-i+1}, G)=1$. Combining this with the equality $x_0^{d-i+1} T =E= GH$, we obtain $H=c x_0^{d-i+1}$ for some scalar $c \in \F_q$. Then $E=c x_0^{d-i+1} G\in \mathcal{R}_{i, j}\subseteq \mathcal{A}$, contradicting the definition of $\mathcal{A}$. Hence, $E$ does not satisfy condition~\eqref{cond:prescribed-factor}. We conclude that $\mathcal{L}$ does not have the property $\mathcal{P}_{i, j}$. 
\end{proof}

Next, we address Question~\ref{quest:linear-systems} when the property $\mathcal{P}$ denotes ``is reduced." While reducedness has a standard scheme-theoretic meaning, we also have a more elementary
definition in the case of hypersurfaces. Recall that a homogeneous polynomial $F\in \mathbb{F}_q[x_0, \ldots, x_n]$ is called \emph{squarefree} if, in the (unique) factorization $F=F_1 F_2 \cdots F_{\ell}$ into $\mathbb{F}_q$-irreducible factors, no $F_i$ is repeated. A hypersurface $X=\{F=0\}$ is called \emph{reduced} if $F$ is squarefree.

\begin{corollary}\label{cor:linear-systems-reduced} Let $d\geq 2$. There exists an $\mathbb{F}_q$-linear system $\mathcal{L}\subseteq \bP(\mathcal{S}_{n,d}(\mathbb{F}_q))$ with (projective) dimension $\binom{n+d}{n}-\binom{n+d-2}{n}-1$ where every $\mathbb{F}_q$-member of $\mathcal{L}$ is a reduced hypersurface of degree $d$. Moreover, the result is sharp: $\dim(\mathcal{L})$ cannot be increased to $\binom{n+d}{n}-\binom{n+d-2}{n}$.
\end{corollary}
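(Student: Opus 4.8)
The plan is to deduce this corollary from Theorem~\ref{thm:system-prescribed-factors} by choosing parameters so that the property $\mathcal{P}_{i,j}$ forces reducedness. Take $i = d$ (so $\mathcal{P}_{d,0}$ says every $\mathbb{F}_q$-member has an $\mathbb{F}_q$-irreducible factor of degree at least $d$), i.e. every $\mathbb{F}_q$-member is $\mathbb{F}_q$-irreducible; an irreducible polynomial is certainly squarefree. Applying Theorem~\ref{thm:system-prescribed-factors} with $i = d$ and $j = 0$ gives a linear system of projective dimension $\binom{n+d}{n} - \binom{n+d-1}{n} - 1$ on which every member is irreducible, hence reduced. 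But this dimension is \emph{smaller} than $\binom{n+d}{n} - \binom{n+d-2}{n} - 1$, so $\mathcal{P}_{d,0}$ is too strong. The right choice is $i = d-1$: then condition~\eqref{cond:prescribed-degree} says $F$ has an $\mathbb{F}_q$-irreducible factor of degree $\geq d-1$, and condition~\eqref{cond:prescribed-factor} (with $j=0$, hence vacuous) is unavailable. If $F$ of degree $d$ has an $\mathbb{F}_q$-irreducible factor $G$ of degree $d-1$, then $F = G\cdot \ell$ for a degree-one form $\ell$, and since $\deg G = d - 1 > 1$ (as $d \geq 3$; the case $d = 2$ is handled separately below), $G \neq \lambda\ell$, so $F$ is squarefree. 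If instead $F$ has an irreducible factor of degree $d$, then $F$ itself is irreducible, hence squarefree. So for $d \geq 3$, applying Theorem~\ref{thm:system-prescribed-factors} with $i = d-1$, $j = 0$ yields a linear system of projective dimension $\binom{n+d}{n} - \binom{n+d-2}{n} - 1$ on which every $\mathbb{F}_q$-member is reduced, and the sharpness clause of Theorem~\ref{thm:system-prescribed-factors} gives that this dimension cannot be increased to $\binom{n+d}{n} - \binom{n+d-2}{n}$ \emph{if we only forbid $\mathcal{P}_{d-1,0}$}.

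The subtlety — and the main obstacle — is that sharpness for \emph{reducedness} is not literally the sharpness statement of Theorem~\ref{thm:system-prescribed-factors}: a linear system that fails $\mathcal{P}_{d-1,0}$ could in principle still consist entirely of reduced hypersurfaces, because failing to have an irreducible factor of degree $\geq d-1$ is not the same as being non-reduced. So for the sharpness half I would revisit the explicit construction in the sharpness proof of Theorem~\ref{thm:system-prescribed-factors} and check that the witness hypersurface produced there is actually non-reduced. Tracing through that argument with $i = d-1$, $j = 0$: one builds $\mathcal{R}_{d-1,0}$ as the projectivization of $\{x_0^{2} T : \deg T = d-2\}$ and intersects with a linear system $\mathcal{L}$ of dimension $\geq \binom{n+d}{n} - \binom{n+d-2}{n}$; the intersection produces $E = x_0^2 T$ for some $T$ of degree $d-2$. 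Such an $E$ is divisible by $x_0^2$, hence is \emph{not squarefree}, i.e. not reduced. This is exactly what we need: any linear system of projective dimension $\geq \binom{n+d}{n} - \binom{n+d-2}{n}$ meets $\mathcal{R}_{d-1,0}$ and therefore contains a non-reduced $\mathbb{F}_q$-member. (One should double-check the dimension count: $\dim \mathcal{R}_{d-1,0} = \binom{n+d-2+1}{n} - 1 = \binom{n+d-1}{n} - 1$; wait — here $T$ ranges over degree $i-1 = d-2$ forms, so $\dim \mathcal{R}_{d-1,0} = \binom{n+d-2}{n} - 1$, and $\dim \mathcal{L} + \dim \mathcal{R}_{d-1,0} \geq \binom{n+d}{n} - 1$, so they meet in $\bP(\mathcal{S}_{n,d}(\mathbb{F}_q))$.) This confirms sharpness for reducedness.

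Finally I would dispose of the edge case $d = 2$, where $i = d - 1 = 1$ is not allowed in Theorem~\ref{thm:system-prescribed-factors}. Here the claim is that there is a linear system of projective dimension $\binom{n+2}{n} - \binom{n}{n} - 1 = \binom{n+2}{n} - 2$ of reduced quadrics, and that $\binom{n+2}{n} - 1$ (the full space) is not achievable. The non-achievability of the full space is clear since, e.g., $x_0^2 = 0$ is a non-reduced quadric. For the construction, a codimension-one linear system avoiding the point $[x_0^2]$ — or more robustly, a hyperplane in $\bP(\mathcal{S}_{n,2}(\mathbb{F}_q))$ chosen to miss the (finite) set of $\mathbb{F}_q$-points corresponding to non-reduced quadrics of the form $\ell^2$ — does not immediately work since there can be up to $\tfrac{q^{n+1}-1}{q-1}$ such squares and a hyperplane need not avoid them all. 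Instead I would apply Theorem~\ref{thm:main} directly with $r = \binom{n+1}{n}+1 = n+2 > \binom{n}{n} = 1$... more carefully: a non-reduced quadric is $\ell^2$ for a linear form $\ell$, the locus of such is the image of a Veronese-type map and is a proper closed subvariety of $\bP(\mathcal{S}_{n,2})$ of dimension $n < \binom{n+2}{n} - 2$ for $n \geq 1$ (check: $\binom{n+2}{n} - 2 = \tfrac{(n+1)(n+2)}{2} - 2 \geq n$ iff $n^2 + n - 4 \geq 0$, true for $n \geq 2$), so a generic linear subspace of the appropriate dimension avoids it — equivalently, using the point-avoidance formulation from the discussion after Theorem~\ref{thm:main}, one invokes that $\bP^{?}(\mathbb{F}_q)$ is not covered by the finitely many subspaces through squares. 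I'd present the $d=2$ case via this elementary dimension-count / avoidance argument, and the $d \geq 3$ case via Theorem~\ref{thm:system-prescribed-factors} with $i = d-1$ as above.
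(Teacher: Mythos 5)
Your handling of $d \geq 3$ is exactly the paper's proof: apply Theorem~\ref{thm:system-prescribed-degree} with $i = d - 1$ to obtain a linear system of the stated dimension on which every $\F_q$-member has an $\F_q$-irreducible factor of degree at least $d-1$, hence is squarefree; for sharpness, intersect $\mathcal{L}$ with $\mathcal{R}_{d-1,0} = \bP\{x_0^2 T \mid \deg T = d-2\}$ to extract a non-reduced member. You are also right that the sharpness clause of Theorem~\ref{thm:system-prescribed-factors} does not transfer verbatim to reducedness and must be rechecked, which is precisely what the paper's explicit intersection with $\mathcal{R}_{d-1,0}$ accomplishes.

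Your concern about $d = 2$ is well-founded, but your proposed workaround does not succeed, and in fact cannot: the existence half of the corollary is false for $d = 2$ and $n \geq 2$. Pulling a hyperplane of $\bP(\mathcal{S}_{n,2}(\F_q))$ back along $\ell \mapsto \ell^2$ yields a quadratic form in the $n+1$ coefficients of $\ell$, and for $n \geq 2$ Chevalley--Warning produces a nontrivial zero, so every hyperplane contains some $\F_q$-point $[\ell^2]$. (In characteristic $2$ this is even more transparent: the squares $\ell^2$ span a projective linear $\bP^n$ inside $\bP(\mathcal{S}_{n,2}(\F_q))$, and any hyperplane meets it in dimension at least $n-1$.) The paper's own proof, which also takes $i = d-1$, silently applies only for $d \geq 3$; the hypothesis should read $d \geq 3$, and no separate $d = 2$ argument is available to rescue the stated bound.
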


\begin{proof} \textbf{Existence.} Using $i=d-1$ in Theorem~\ref{thm:system-prescribed-degree}, there exists a linear system $\mathcal{L}$ of degree $d$ hypersurfaces with $\dim(\mathcal{L})=\binom{n+d}{n}-\binom{n+d-2}{n}-1$ where each $\mathbb{F}_q$-member $X=\{F=0\}$ has an irreducible factor of degree at least $d-1$; in particular, $F$ is squarefree, and hence $X$ is reduced.

\textbf{Sharpness.} Let $\mathcal{L}$ be a linear system of degree $d$ hypersurfaces with $\dim(\mathcal{L})=\binom{n+d}{n}-\binom{n+d-2}{n}$. Consider the linear space $\mathcal{R}_{d-1, 0} = \mathbb{P}(\{x_0^2 T \ | \deg(T) = d-2\})$ from the proof of Theorem~\ref{thm:system-prescribed-factors}. Then $\mathcal{L}\cap\mathcal{R}_{d-1, 0}$ has a nontrivial intersection in $\mathbb{P}(\mathcal{S}_{n, d}(\mathbb{F}_q))$, yielding a non-reduced $\mathbb{F}_q$-member of $\mathcal{L}$. \end{proof}

\begin{remark}
By slightly modifying the above proof, Corollary~\ref{cor:linear-systems-reduced} can be generalized by replacing the condition that $F$ is squarefree with cubefree, or more generally $k$-free for any $k\leq d-1$. In this general case, the maximum attainable projective dimension of a linear system where every $\F_q$-member is $k$-free
is $\binom{n+d}{n}-\binom{n+d-k}{n}-1$.
\end{remark}

\begin{remark} Let $Y=\{Q=0\}$ be a fixed hypersurface of degree $d-i+1$. We define a property $\mathcal{P}_{Y, j}$ analogous to Definition~\ref{def:prescribed-factors} as follows. A linear system $\mathcal{L}$ of hypersurfaces is said to have \emph{property $\mathcal{P}_{Y, j}$} if there exist polynomials $G_1, ..., G_{j}$ of degree $i-1$ such that for every $\mathbb{F}_q$-member $X=\{F=0\}$ of $\mathcal{L}$, one of the following conditions holds:
\begin{enumerate}
    \item $X$ does not contain $Y$ (that is, $F$ is not divisible by $Q$), or
    \item $F$ has an $\mathbb{F}_q$-irreducible factor $G$ of degree $i-1$, where $G\in \langle G_1, \dots, G_{j} \rangle$.
\end{enumerate}

Now assume $\frac d2+1<i\leq d$ and $0\leq j\leq n$.
The property $\mathcal{P}_{i, j}$ implies property $\mathcal{P}_{Y, j}$, for if $F$ has an irreducible factor of degree $i$, then $F$ cannot have any factors of degree $d-i+1$ since $i>d-i+1$, so in particular $F$ cannot be divisible by $Q$. This shows that $\mathcal{P}_{Y,j}$ is a weaker property than $\mathcal{P}_{i, j}$, so the linear system $\mathcal{L}_0$ constructed in the proof of Theorem~\ref{thm:system-prescribed-factors} also satisfies $\mathcal{P}_{Y,j}$. A priori, a linear system satisfying $\mathcal{P}_{Y,j}$ could be larger; however, we will show that the same maximum dimension holds.

To show the sharpness, we proceed as in the proof of Theorem~\ref{thm:system-prescribed-factors}. We analogously define $\mathcal{R}_{Y, j}$ as the intersection
$$
\mathbb{P}(\{Q\cdot T \ | \ \deg(T)=i-1 \})\cap \mathcal{A},
$$
where $\mathcal{A}$ is the same as before. Assume there exists $E\in \mathcal{L}\cap \mathcal{R}_{Y,j}$ that is divisible by some $\mathbb{F}_q$-irreducible factor $G\in\langle G_1,\ldots,G_j\rangle$. Since we are now assuming the stricter condition $i>\frac{d}{2}+1$, we have $\deg(G) = i - 1 > d-i + 1 = \deg(Q)$, which ensures $\gcd(Q, G) = 1$. We then derive a contradiction as in the proof of Theorem~\ref{thm:system-prescribed-factors}.
\end{remark}

\begin{remark}\label{rmk:replace Fq} Theorem~\ref{thm:system-prescribed-factors} holds more generally if we replace the base field $\mathbb{F}_q$ with an arbitrary field $K$ that admits a separable extension of degree $\binom{n+i-1}{n}-j$. In particular, the result holds for all number fields. 
However, this does not hold for all fields $K$. For instance, if $K$ is algebraically closed, $j=0$, and $i=d$, then the maximal dimension is reduced by $n$; see \cite[Proposition 8.1]{AGR24}.
\end{remark}

\section{The proportion of points satisfying Theorem~\ref{thm:main}}\label{sec:conj}
We end the paper with some observations and conjectures on estimating $\mu(q,n,d,r)$.

Recall that in equation~\eqref{eq:mudef}, we defined the quantity $\mu(q,n,d,r)$, the proportion of points $P\in\bP^n(\FF_{q^r})$ for which the linear system of $F\in\mathcal{S}_{n,d}(\FF_q)$ vanishing at $P$ is as small as possible. Theorem \ref{thm:main} proves that $\mu(q,n,d,r)$ is always positive, but it is natural to ask whether more precise bounds or estimates are possible. 

\subsection{Large \texorpdfstring{$q$}{q}}

When $q$ is large, and $d$ is small compared to $q^r$, we find that $\mu=\mu(q,n,d,r)$ is close to $1$. 
\begin{proposition}
    Let $q$ be a prime power, and $n,d,r\geq 1$. Suppose $C\geq 1$ satisfies $d<Cq^{r/5}$. Then $\mu(q,n,d,r)>1-\frac{6C^5+3}{q}$.
\end{proposition}
\begin{proof}
    For $r=1$ we have $\mu=1$, so we may assume $r\geq 2$. For $n=1$, by Lemma~\ref{lem:n=1} we see that $\mu$ is bounded below by $\frac{1}{q^r+1}$ times the number of elements of $\FF_{q^r}$ that do not lie in any proper subfield of $\FF_{q^r}$. This implies
    \[\mu\geq \frac{q^r-\sum_{k=1}^{r-1}q^k}{q^r+1}=1-\frac{\frac{q^{r}-1}{q-1}}{q^r+1}>1-\frac{3}{q},\]
    so we may assume $n\geq 2$. For $d=1$ we apply Remark~\ref{rmk: simpler cases} to obtain $\mu\geq 1-\frac{1}{q-1}\geq 1-\frac{3}{q}$, so we may assume $d\geq 2$. Then we are in the setting of Proposition~\ref{prop:less than q}, with
    \[\Delta(q^r,d)< d^2q^{-r/2}+5 d^{13/3}q^{-r}< C^2 q^{-r/10}+5C^{13/3}q^{-2r/15}<6C^5.\]
    By Lemma~\ref{lem:bound t 2} we have $t(d-1)<2$, and we also have $q^{-|m-r|}\leq 1$, giving the desired bound.
\end{proof}
As a consequence, if we fix $C$, then $\mu(q,n,d,r)\to 1$ as $q\to\infty$ for any values of $n,d,r$ that satisfy the constraint $d<Cq^{r/5}$. For instance, if we take $C=3$, then as long as $r\geq d$ we always have $d<Cq^{r/5}$ and hence the proposition holds.

\subsection{\texorpdfstring{Large $n$ and $d$}{Large n and d}}

The situation is more interesting if we fix $q$ and allow the other parameters to vary. Here we consider the special case $r=m$: that is, given a prime power $q$, integers $n,d\geq 1$, and $m:=\binom{n+d}{d}$, what is the proportion of points $P\in\bP^n(\FF_{q^m})$ that do not lie on any degree $d$ hypersurface defined over $\FF_q$?
\begin{conjecture}\label{conj:qpochhammer}
    Fix a prime power $q$. Given $n,d\geq 1$, set $m=\binom{n+d}{n}$. As $n,d\to\infty$, the proportion of points of $\bP^n(\F_{q^m})$ that do not lie on any degree $d$ hypersurface over $\F_q$ converges to 
    \[\prod_{i=1}^\infty (1-q^{-i}).\]
\end{conjecture}

For instance, if $q=2$ this predicts that for sufficiently large values of $n$ and $d$, approximately $28.879\%$ of points in $\bP^n(\F_{2^m})$ do not lie on any degree $d$ hypersurface over $\F_2$. This has been observed experimentally in the course of checking exceptional cases (Appendix \ref{sec:computer}), even for $(n,d)$ as small as $(2,6)$: if $1000$ points are selected from $\bP^2(\F_{2^{28}})$ uniformly, typically between $270$ and $310$ of them do not lie on any degree $6$ hypersurface. For $q=3,4,5$ we obtain percentages around $56\%$, $69\%$, and $76\%$ respectively, with the limiting probability converging to $1$ as $q$ increases.

We now explain the heuristic behind Conjecture~\ref{conj:qpochhammer}.
After restricting to the affine chart $x_0=1$, a point
\[
P=(\alpha_1,\dots,\alpha_n)\in \bA^n(\F_{q^m})
\]
gives rise to the collection of monomial values
\[
\alpha_1^{j_1}\cdots \alpha_n^{j_n},
\qquad j_1+\cdots+j_n\le d.
\]
There are exactly $\binom{n+d}{n}=m$ such values. The condition that $P$ lies on no degree $d$ hypersurface is precisely that these $m$ elements of $\F_{q^m}$ be linearly independent over $\F_q$. For a randomly chosen point $P$, we might expect these monomial values to behave like $m$ random vectors in an $m$-dimensional $\F_q$-vector space, and thus the probability of linear independence should be close to
\[
\prod_{i=1}^m(1-q^{-i}).
\]

\appendix

\section{Exceptional cases}\label{sec:computer}

Recall that, by the discussion following Lemma~\ref{lem:reduce-n}, we may assume $n,d\geq 2$ and $r>\binom{n-1+d}{n-1}$.
In view of Section~\ref{subsec: checking inequality}, Theorem~\ref{thm:q 2 case}, Theorem~\ref{thm:q=d=2}, and Theorem~\ref{thm:inc-exc}, the following cases remain to be checked:
\begin{enumerate}[(i)]
    \item $q\leq 3$, $n=2$, $d\leq 6$, and $r\leq m+1$;
    \item $q=3$ and $r\leq 10$;
    \item $q=2$ and $r\leq 24$;
    \item $q=2$, $r=m$, $d\in\{3,4\}$ and $n\leq 35$;
    \item $q=2$, $r=m$, $d\in\{5,6\}$ and $n\leq 12$;
    \item $q=2$, $r=m$, $d\in\{7,8\}$ and $n\leq 10$;
    \item $q=2$, $r=m$, and $(n,d)=(9,9)$.
\end{enumerate}
We can check that the theorem holds in each of these cases by a finite computation. We have provided a GitHub repository that can be used to verify each of these cases \cite{code}. For each case $(q,n,d,r)$, the repository includes one example of a point $P\in \bP^n(\F_{q^r})$ for which the space of degree $d$ hypersurfaces through $P$ has the expected dimension (these points were found via random search). The full verification that the space of hypersurfaces through each of these points has the expected dimension took approximately $25$ minutes on a laptop.

We describe the method of verification here. 
The following Magma function takes a positive integer $d$, a finite field $F_0=\F_q$, and a non-zero tuple $P=(a_0,\ldots,a_n)$ consisting of elements $a_i$ in the field $F=\F_{q^r}$, and returns the dimension of the vector space $\{f\in\mathcal{S}_{n,d}(F_0) \ | \ f(P)=0\}$ over $F_0$.
\begin{verbatim}
function IncidentHypersurfaceDim(d, P, F0)
    n := #P - 1;
    allmonomials := [];
    for sub in Subsets({1..n+d}, n) do
        s := Sort(Setseq(sub));
        exp := [s[1]-1] cat [s[i+1]-s[i]-1 : i in [1..n-1]] 
               cat [n+d-s[n]]; 
        y := &*[P[i]^exp[i] : i in [1..n+1]];
        Append(~allmonomials, Eltseq(y, F0));
    end for;
    return Binomial(n+d,n) - Rank(Matrix(allmonomials));
end function;
\end{verbatim}

The code works as follows. First, recall that there is a one-to-one correspondence taking each $n$-element subset of $\{1,\ldots,n+d\}$ to a degree $d$ monic monomial in $n+1$ variables: if we label the elements of the subset in increasing order by $s_1,\ldots,s_n$, and set $s_0:=0$ and $s_{n+1}:=n+d+1$, then the corresponding monomial has each $x_i$ ($i=0,\ldots,n$) raised to the power of $s_{i+1}-s_i-1$. Given $P\in F^{n+1}$, we can compute a vector $(y_1,\ldots,y_m)\in F^m$ where $y_i$ is the $i$-th monomial evaluated at $P$.

Now any $f\in\mathcal{S}_{n,d}(F_0)$ corresponds to a linear form $\sum_{i=1}^m c_ix_i$ for some $c_1,\ldots,c_m\in F_0$, where $f(P)=\sum_{i=1}^m c_iy_i$. Applying the isomorphism $F\simeq F_0^r$ (implemented by \texttt{Eltseq(y, F0)} in the code above), we may replace each $y_i$ with $v_i\in F_0^r$. If we write $M\in M_{r\times m}(F_0)$ for the matrix with column vectors $v_1,\ldots,v_m$, then $f(P)=0$ if and only if the column vector $(c_1,\ldots,c_m)$ is in the kernel of $M$. Thus, the desired dimension equals $m- \operatorname{rank} M$. 

The main bottlenecks in the algorithm are computing all the monomials, and determining the rank of $M$. For simplicity, we consider the case $q=2$ and $r=m$, and assume $d$ is constant. There are $m$ monomials, and each can be computed using at most $d$ multiplications in $\F_{2^m}$. Multiplying two elements of $\F_{2^m}$ takes $O(m^2)$ bitwise operations, so this part of the algorithm takes $O(m^3)$ bitwise operations. Computing the rank of an $m\times m$ matrix using Gaussian elimination also takes $O(m^3)$ bitwise operations (though note that over $\F_2$ we have an advantage because no multiplications need to be performed). The finite fields involved in these special cases can get quite large, so an $O(m^3)$ algorithm takes a nontrivial amount of time. For instance, if $(n,d)=(35,4)$ then $r=m=82251$, so $m^3>10^{14}$; this explains why the verification takes over four minutes on a laptop to check this particular case.
That said, we have not devoted much effort to optimization, so it is possible that the verification code could be sped up further. 

\section*{Acknowledgments}
We thank Kaloyan Slavov for pointing out the reference \cite{S15}. The second author was supported by ERC Starting Grant 101076941 (`\textsc{Gagarin}'). The third author was supported in part by an NSERC fellowship.

\bibliographystyle{abbrv}
\bibliography{main}

\begin{thebibliography}{10}

\bibitem{AGR23}
S.~Asgarli, D.~Ghioca, and Z.~Reichstein.
\newblock Linear families of smooth hypersurfaces over finitely generated fields.
\newblock {\em Finite Fields Appl.}, 87:Paper No. 102169, 10, 2023.

\bibitem{AGR24}
S.~Asgarli, D.~Ghioca, and Z.~Reichstein.
\newblock Linear system of hypersurfaces passing through a {G}alois orbit.
\newblock {\em Res. Number Theory}, 10(4):Paper No. 84, 16, 2024.

\bibitem{AGY23}
S.~Asgarli, D.~Ghioca, and C.~H. Yip.
\newblock Existence of pencils with nonblocking hypersurfaces.
\newblock {\em Finite Fields Appl.}, 92:Paper No. 102283, 11, 2023.

\bibitem{code}
S.~Asgarli, J.~Love, and C.~H. Yip.
\newblock Hypersurfaces through a point: Github repository.
\newblock \url{https://github.com/jonathanrlove/hypersurfaces_through_point/}, October 30, 2024.

\bibitem{BK12}
A.~Bucur and K.~S. Kedlaya.
\newblock The probability that a complete intersection is smooth.
\newblock {\em J. Th\'eor. Nombres Bordeaux}, 24(3):541--556, 2012.

\bibitem{CM06}
A.~Cafure and G.~Matera.
\newblock Improved explicit estimates on the number of solutions of equations over a finite field.
\newblock {\em Finite Fields Appl.}, 12(2):155--185, 2006.

\bibitem{C16}
A.~Couvreur.
\newblock An upper bound on the number of rational points of arbitrary projective varieties over finite fields.
\newblock {\em Proc. Amer. Math. Soc.}, 144(9):3671--3685, 2016.

\bibitem{LW54}
S.~Lang and A.~Weil.
\newblock Number of points of varieties in finite fields.
\newblock {\em Amer. J. Math.}, 76:819--827, 1954.

\bibitem{P04}
B.~Poonen.
\newblock Bertini theorems over finite fields.
\newblock {\em Ann. of Math. (2)}, 160(3):1099--1127, 2004.

\bibitem{S91}
J.-P. Serre.
\newblock Lettre \`a{} {M}. {T}sfasman.
\newblock {\em Ast\'erisque}, 198-200:11, 351--353, 1991.
\newblock Journ\'ees Arithm\'etiques, 1989 (Luminy, 1989).

\bibitem{S15}
K.~Slavov.
\newblock The moduli space of hypersurfaces whose singular locus has high dimension.
\newblock {\em Math. Z.}, 279(1-2):139--162, 2015.

\bibitem{S94}
A.~B. {S\o rensen}.
\newblock On the number of rational points on codimension-{$1$} algebraic sets in {$\mathbb{P}^n(\mathbb{F}_q)$}.
\newblock {\em Discrete Math.}, 135(1-3):321--334, 1994.

\bibitem{S97}
M.~Sombra.
\newblock Bounds for the {H}ilbert function of polynomial ideals and for the degrees in the {N}ullstellensatz.
\newblock {\em J. Pure Appl. Algebra}, 117/118:565--599, 1997.
\newblock Algorithms for algebra (Eindhoven, 1996).

\end{thebibliography}
    
\end{document}